\newtheorem{Prop}{Proposition}
\newtheorem{Def}{Definition}
\newtheorem{Thm}{Theorem}
\newtheorem{Cor}{Corollary}
\newtheorem{Rem}{Remark}
\newcommand{\R}{\mathbb{R}}
\newcommand{\Z}{\mathbb{Z}}
\newcommand{\K}{\kappa}
\newcommand{\n}{\nabla}
\newcommand{\na}{\nabla ^{*}}
\newcommand{\tr}{\mathrm{tr\,}}
\newcommand{\<}{\langle}
\renewcommand{\>}{\rangle}
\renewcommand{\d}{\partial}
\newcommand{\RO}{\mathring{R}}
\newcommand{\cs}{\mathrm{cs}}
\newcommand{\sn}{\mathrm{sn}}
\newcommand{\CM}{{\mathcal{CM}}}
\newcommand{\SG}{{\mathcal{SG}}}
\newcommand{\reg}{{\mathrm{reg}}}
\newcommand{\pr}{{\mathrm{pr}}}
\newcommand{\dev}{{\mathrm{dev}}}
\newcommand{\HH}{\mathbb H}
\newcommand{\RR}{\mathbb R}
\newcommand{\Sph}{\mathbb S}
\newcommand{\ZZ}{\mathbb Z}
\newcommand{\NN}{\mathbb N}
\newcommand{\CC}{\mathbb C}
\newcommand{\calA}{\mathcal A}
\newcommand{\calC}{\mathcal C}
\newcommand{\calD}{\mathcal D}
\newcommand{\calE}{\mathcal E}
\newcommand{\calG}{\mathcal G}
\newcommand{\calI}{\mathcal I}
\newcommand{\calL}{\mathcal L}
\newcommand{\calM}{\mathcal M}
\newcommand{\calN}{\mathcal N}
\newcommand{\calO}{\mathcal O}
\newcommand{\calP}{\mathcal P}
\newcommand{\calS}{\mathcal S}
\newcommand{\calU}{\mathcal U}
\newcommand{\calV}{\mathcal V}
\newcommand{\del}{\partial}
\newcommand{\Ric}{\mathrm{Ric}}
\newcommand{\e}{\epsilon}
\newcommand{\phg}{{\mathrm{phg}}}
\newcommand{\Fr}{{\mathrm{Fr}}}
\newcommand{\DN}{{\mathrm{DN}}}
\newcommand{\ie}{\mathrm{ie}}
\newcommand{\calR}{\mathcal R}
\newcommand{\edge}{\mathrm{e}}
\title{Infinitesimal rigidity of cone-manifolds \\
and the Stoker problem for hyperbolic and {E}uclidean polyhedra}
\author{Rafe Mazzeo \thanks{Partially supported by the NSF grant DMS-0505709 and 0805529} \\ Stanford University \and 
Gr\'egoire Montcouquiol \thanks{Partially supported by the ANR program GeomEinstein 06-BLAN-0154} \\ Universit\'e Paris-Sud}
\date{}
\begin{document}

\maketitle

\begin{abstract}
The deformation theory of hyperbolic and Euclidean cone-mani\-folds with all cone angles less than $2\pi$ plays
an important role in many problems in low dimensional  topology and in the geometrization of $3$-manifolds.
Furthermore, various old conjectures dating back to Stoker about the moduli space of convex hyperbolic and Euclidean
polyhedra can be reduced to the study of deformations of cone-manifolds by doubling a polyhedron across
its faces. This deformation theory has been understood by
Hodgson and Kerckhoff \cite{HK} when the singular set has no vertices, and by Wei\ss{} \cite{Weiss2}
 when the cone angles are less than $\pi$. We prove here an infinitesimal rigidity result valid for cone angles less than $2\pi$, 
stating that infinitesimal deformations which leave the dihedral angles fixed are trivial in the hyperbolic case, and reduce to
some simple deformations in the Euclidean case.  The method is to treat this as a problem concerning the
deformation theory of singular Einstein metrics, and to apply analytic methods about elliptic operators on
stratified spaces.  This work is an important ingredient in the local deformation theory of cone-manifolds by
the second author \cite{Montcouq2}, see also the concurrent work by Wei\ss{} \cite{Weiss4}. 
\end{abstract}

\section{Introduction}
A conjecture made by J.J.\ Stoker in 1968 \cite{Stoker} concerns the rigidity of convex polyhedra in three-dimensional 
constant curvature spaces. More specifically, he asked whether such a polyhedron is determined by its dihedral angles, 
i.e.\ the angles between its faces. In the Euclidean case, the existence of homotheties and translations of faces obviously 
contradicts rigidity, and Stoker asked the more precise question of whether the \emph{internal} angles of each face are 
determined by the set of all dihedral angles of the polyhedron. On the other hand, in the spherical and hyperbolic settings, 
there are no obvious obstacles to the validity of such a rigidity phenomenon. 

This problem has motivated many papers (see for instance \cite{Karcher}, \cite{Milka}, \cite{Pog}), but until recently results 
had only been obtained for specific classes of polyhedra. Surprisingly, Schlenker \cite{SchlenkerPoly} found a counter-example 
to both the  infinitesimal and global versions of this conjecture in the spherical case. In this paper we settle the infinitesimal version 
of the two remaining cases; this states that there is no nontrivial deformation of a convex hyperbolic polyhedron $\calP$ for which the 
infinitesimal variation of all dihedral angles vanishes. In Euclidean space, there exist such first-order deformations but they preserve 
the internal angles of the faces. 

We work in the somewhat more general context of (hyperbolic or Euclidean) cone-manifolds. This is a class of constant
curvature stratified spaces emphasized, if not actually introduced, by Thurston \cite{ThurstonGeom} in his investigation 
of deformations of cusped hyperbolic $3$-manifolds. These are Riemannian generalizations of orbifolds in that the cone angles
at each edge are arbitrary positive numbers (possibly larger than $2\pi$), and in particular are not necessarily of 
the form $2\pi/k$, $k \in \NN$, as they are for orbifolds. There is a completely analogous problem, which we also
call the Stoker conjecture, concerning the rigidity of hyperbolic or Euclidean cone-manifolds, and we resolve the 
infinitesimal version of this too assuming that the cone angles at all edges are less than $2\pi$. This condition 
on the cone angles is the analogue of convexity in this setting. In fact, any polyhedron
$\calP$ can be simultaneously doubled across all its faces, giving rise to a cone-manifold with the same curvature. 
Convexity of the polyhedron is equivalent to this angle condition on its double. 

\begin{Thm}[The Infinitesimal Stoker Conjecture for Cone-Manifolds]\label{thm:mainthm}
Let $M$ be a closed three-dimensional cone-manifold with all cone angles smaller than $2\pi$. If $M$ is hyperbolic, then 
$M$ is infinitesimally rigid relative to its cone angles, i.e.\ every angle-preserving infinitesimal deformation is trivial.
If $M$ is Euclidean, then every angle-preserving deformation also preserves the spherical links of the codimension $3$ 
singular points of $M$.

In particular, convex hyperbolic polyhedra are infinitesimally rigid relatively to their dihedral angles, while every 
dihedral angle preserving infinitesimal deformation of a convex Euclidean polyhedron also preserves the internal angles 
of the faces.
\end{Thm}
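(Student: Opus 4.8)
The plan is to translate the question into an elliptic problem on the open regular part $M_{\reg} = M \setminus \Sigma$, where $\Sigma = \Sigma_1 \cup \Sigma_0$ is the singular locus with edge stratum $\Sigma_1$ and vertex set $\Sigma_0$. Since constant curvature means $\Ric(g) = 2\kappa g$, an infinitesimal deformation of the cone-manifold structure is a symmetric $2$-tensor $h$ on $M_{\reg}$ solving the linearized Einstein equation modulo Lie derivatives of vector fields; the requirements that the deformation preserve the cone-manifold structure and fix every cone angle become asymptotic (indicial) conditions on $h$ along $\Sigma_1$ and along $\Sigma_0$, dictated by the local models --- $\HH^3$ or $\RR^3$ with a conical singularity of angle $\alpha$ along a geodesic near an edge of angle $\alpha$, and the Euclidean cone over the spherical link $L$ (a spherical cone-surface on $S^2$ with cone angles $<2\pi$) near a vertex. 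Equivalently, and more convenient for what follows, $h$ together with its gauge ambiguity is packaged as a closed and co-closed $1$-form $\omega$ with values in the flat bundle $\calE$ of infinitesimal isometries of the model geometry ($\mathfrak{so}(3,1)$ in the hyperbolic case, $\mathfrak{so}(3)\ltimes\RR^3$ in the Euclidean case), so that the theorem amounts to the vanishing, or near-vanishing, of a suitable first cohomology $H^1(M_{\reg};\calE)$.

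The analytic engine is the pseudodifferential calculus for \emph{incomplete iterated edge} (wedge) metrics, of which $g$ on $M_{\reg}$ is an example: it supplies weighted Sobolev spaces, normal operators at $\Sigma_1$ and at $\Sigma_0$, and a Fredholm and regularity theory producing polyhomogeneous expansions of solutions. Two outputs are needed. \textbf{(i) Gauge fixing:} given an infinitesimal deformation, one adds a Lie derivative $\calL_V g$ with $V$ tangent to and suitably decaying at $\Sigma$ --- so that it integrates to cone-structure-preserving diffeomorphisms --- to place $h$ in the Bianchi/divergence-free gauge; solvability is a Fredholm statement requiring the working weight to avoid the indicial roots of a Laplace-type operator. \textbf{(ii) Regularity and indicial roots:} bounded solutions of the gauged equation are polyhomogeneous, and the admissible leading exponents along $\Sigma_1$, respectively at a vertex, are governed by the spectrum of the transverse $2$-dimensional cone Laplacian, respectively by the spectrum of a twisted Hodge Laplacian on $\calE|_L$ over the spherical link $L$. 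The hypothesis that all cone angles are $< 2\pi$ is precisely what keeps these spectra out of the critical band: on a $2$-cone of angle $\alpha$ the first nonzero eigenvalue is $(2\pi/\alpha)^2 > 1$, and on the spherical cone-surface $L$ with cone angles $<2\pi$ the relevant low eigenvalues are likewise controlled, so the ``bad'' indicial roots do not occur and $\omega$ has the decay the next step needs.

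With these preliminaries I would carry out a Bochner argument on $M_{\reg}$, in the spirit of the local rigidity theorems of Calabi and Weil and their cone-manifold refinement by Hodgson--Kerckhoff \cite{HK}. Exploiting that $g$ has constant sectional curvature, one obtains for the harmonic form $\omega$ (equivalently the gauged TT-tensor $h$) an identity $\int_{M_{\reg}}\bigl(|\n\omega|^2 + c_\kappa\, q(\omega)\bigr) = (\text{boundary terms along }\Sigma)$ with $q$ a pointwise quadratic expression; the decay from step (ii) kills the boundary terms, so the bulk integral vanishes. In the hyperbolic case the combination is positive definite, forcing $\omega \equiv 0$ and hence the triviality of the deformation. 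In the Euclidean case $c_\kappa = 0$, so one only gets that $\omega$ is parallel; reading this back through the vertex expansions shows that such deformations cannot change the isometry type of the spherical links of the codimension-$3$ points, which is the asserted conclusion. The statement about polyhedra then follows by the doubling construction of the introduction: doubling $\calP$ across all of its faces yields a closed cone-manifold whose edge cone angles are twice the dihedral angles of $\calP$ --- hence $< 2\pi$ exactly when $\calP$ is convex --- and whose vertex links are the doubles of the spherical polygons that are the vertex links of $\calP$; a dihedral-angle-preserving (infinitesimal) deformation of $\calP$ induces a $\ZZ/2$-symmetric angle-preserving deformation of the double, so the theorem gives rigidity in the hyperbolic case and, in the Euclidean case, preservation of the vertex links, i.e.\ of the internal angles of the faces.

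The crux, and the genuinely new point beyond \cite{HK} (no vertices) and \cite{Weiss2} (cone angles $< \pi$), is step (ii) at the vertices. One must identify the indicial roots at a codimension-$3$ point with the spectrum of the twisted Hodge Laplacian on the spherical link $L$, prove that the hypothesis of cone angles $< 2\pi$ forces this spectrum away from the critical value, and feed this into a two-step (edge-then-cone) parametrix construction in the iterated calculus. This spectral-gap analysis on spherical cone-surfaces, and its interaction with the iterated parametrix, is where the main work lies.
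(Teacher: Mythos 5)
Your proposal is correct and follows essentially the same route as the paper: gauge-fixing via the Bianchi operator using the incomplete iterated edge calculus, indicial-root and normal-operator computations at edges and vertices with the cone-angle condition supplying the spectral gap on the link, a Bochner/Weitzenb\"ock argument whose boundary terms are killed by the resulting decay, and the doubling construction for polyhedra. The only cosmetic difference is that you phrase the harmonic object as an $H^1$ class with values in the flat bundle of infinitesimal isometries throughout, whereas the paper works with the gauged symmetric $2$-tensor as a $T^*M$-valued $1$-form in the hyperbolic case and passes to the isometry bundle and the holonomy representation only for the Euclidean conclusion.
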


The cone-manifold rigidity problem has been investigated before, see in particular Hodgson and Kerckhoff's seminal 
article \cite{HK} for the case were the singular locus is a link, and Wei\ss{}'s paper \cite{Weiss2} which treats the case where all 
dihedral angles are smaller than $\pi$. 

Our approach to these problems is based on global analysis, and in particular on regarding cone-manifolds as Einstein 
manifolds with very special types of metric singularities. The infinitesimal rigidity statement reduces eventually to a Bochner 
argument, but the first main step is to put an infinitesimal deformation which preserves dihedral angles into a good
gauge so that it has a tractable form near the singular locus. We employ a now standard formalism for Einstein 
deformation theory using the Bianchi gauge to do this. The second main step is to study the asymptotics of this gauged
infinitesimal deformation $h$ in order to justify the integrations by parts in the Bochner argument. This is equivalent to 
showing that $h$ lies in a suitable (functional analytic) domain for a semibounded self-adjoint extension of the linearized 
gauged Einstein operator, which involves a regularity theorem showing that $h$ has an asymptotic expansion and an 
examination of the leading terms in this expansion. The existence of such an expansion is now standard for manifolds 
with isolated conic or simple edge singularities. However, three-dimensional cone-manifolds may have slightly more 
complicated (``depth $2$'') singular structure, so this requires some new analytic work. 

The plan of this paper is as follows. We begin with a review of cone-manifolds, with an emphasis on the two-and 
three-dimensional settings, in the context of the more general notion of iterated cone-edge spaces. We then discuss 
the deformation theory of cone-manifolds via the less obstructed problem of deforming the germ of the 
cone-manifold structure in a neighbourhood of the singular locus. This leads to a precise statement of the 
infinitesimal deformation problem. After that we review the analytic tools needed to study the deformation problem 
on the entire cone-manifold: first, the Einstein equation and Bianchi gauge, and their linearizations, then the analytic
theory of conic and iterated edge operators. With these tools we carry out the remainder of the proof.

Since the time we proved this result, but before this paper was written, the second author has studied the local deformation
theory and obtained a corresponding local (rather than infinitesimal) rigidity result for $3$-dimensional hyperbolic 
cone-manifolds and polyhedra \cite{Montcouq2}; this uses the infinitesimal rigidity proved here.  At the same time, Wei\ss{} 
has independently proved a similar local rigidity theorem using somewhat different methods \cite{Weiss4}. 

The authors wish to thank Steve Kerckhoff for his interest; the first author is also grateful to Hartmut Wei\ss{} for 
helpful discussions. The authors are also indebted to the three referees, who read the manuscript carefully
and gave many valuable suggestions to improve the exposition. 

\section{Geometry of cone-manifolds}\label{sec:geomconemfd}

Cone-manifolds can be defined synthetically as $(X,G)$-spaces in Thurston's sense, but the point of view we first adopt here is to treat them as the constant curvature elements of the class of Riemannian iterated edge spaces, or conifolds. We begin with a brief review of 
this latter class of singular spaces, then recall the synthetic description of cone-manifolds, relating the 
two descriptions along the way.

\subsection*{Iterated edge spaces} 
Let $(N,h)$ be a compact stratified Riemannian space. This means simply that $N$ is a smooth stratified space 
in the usual sense, cf.\ \cite{Pflaum}, that each stratum $S$ carries a Riemannian metric $h_S$, and that 
these various metrics satisfy the obvious compatibility relationships. (We do not belabour this definition 
because all examples considered here will be quite simple.) The (complete) \emph{cone over $N$}, $C(N)$ is the space 
$\left([0,\infty)_r \times N\right)/\sim$, where $(0,p) \sim (0,p')$ for all $p,p' \in N$, endowed with the 
metric $dr^2 + r^2 h$; the truncated cone $C_{a,b}(N)$ is the subset where $a \leq r \leq b$. Any singular 
stratum $S \subset N$ induces a singular stratum $C(S)$ in $C(N)$, with $\dim C(S) = \dim S + 1$. 

The class of iterated edge spaces, which we also call conifolds, are those which can be obtained locally by iterated coning 
and formation of products, starting from smooth compact manifolds. More formally, for each $k \geq 0$ the class 
$\calI_k$ of compact conifolds of depth $k$ is defined by induction as follow:

\begin{Def} A conifold of depth $0$ is a compact smooth manifold.  A stratified pseudomanifold $X$ lies in $\calI_k$ 
if it is a compact stratified space and any point $p \in X$, contained in an open stratum $S$ of dimension $\ell$, has a 
neighbourhood $\calU$ such that 
$\calV = \calU \cap S$ is diffeomorphic to an open ball in $\R^\ell$ and such that $\calU$ is diffeomorphic to a product 
$\calV \times C_{0,1}(N)$ where $N \in \calI_{j}$ for some $j < k$. The dimension of $X$ is defined by induction as 
$n = \ell + \dim N +1$; we ask furthermore that this quantity is independent of the point $p \in X$.
An incomplete iterated edge metric $g$ on $X$ is one which respects the above diffeomorphism, i.e.\ is locally quasi-isometric 
to one of the form $g \sim dr^2 + r^2 h + \kappa$, where $h$ is an incomplete iterated edge metric on $N$ and $\kappa$ is a 
metric on $S$. The entire class of conifolds $\calI$ is the union over $k \in {\mathbb N}$ of these subclasses $\calI_k$.
\end{Def}

Thus $X \in \calI_k$ if it can be formed by a $k$-fold iterated coning or edging procedure. A point for which $N=\emptyset$ 
is called \emph{regular}; it lies in the top-dimensional stratum of $X$. If $S$ is a singular stratum with $\dim S > 0$, then 
we say that it is an edge in $X$; some neighbourhood of $S$ in $X$ is diffeomorphic to a bundle of cones over $S$ with fibre 
$C(N)$. For various investigations of analysis on these spaces it is usually necessary to assume more about
the structure of the metric near the singular strata, but we do not elaborate on this here since our goals are more limited.

A careful discussion of the differential topology of this class of spaces, including a comparison with more classical definitions of 
stratified pseudomanifolds, and the construction of a resolution of any iterated edge space as a manifold with corners with ``iterated 
fibration structure'' is contained in the recent paper \cite{ALMP}. 

A (constant curvature) \emph{cone-manifold} is a conifold $(M,g)$ such that the induced 
metric $g_S$ on any stratum of dimension $>1$ has constant sectional curvature $\K$ (in particular, this is true on its top-dimensional 
stratum, which is an open and dense subset).  The name `cone-manifold' is somewhat misleading, but 
it is the standard and accepted terminology so we do not discard it. A cone-manifold is called hyperbolic, Euclidean or 
spherical depending on whether $\K = -1$, 
$0$ or $+1$ (any other $\K$ can be reduced to one of these three cases by rescaling the metric). We always denote by 
$\Sigma$ the entire \emph{singular locus} of $M$, i.e.\ the union of all strata with dimension less than $\dim M$.

The two- and three-dimensional settings are the ones of interest in this paper. Thus, a cone-surface $N$ is a 
two-dimensional space with isolated conic singularities (hence an element of $\calI_1$) and with constant Gauss 
curvature $\K$ on its smooth part. Near each singular point $p$ it is a (constant curvature) cone over a circle,
with metric 
\begin{equation}
dr^2 + \sn_{\K}^2 r \, d\theta^2, \qquad \theta \in \RR / \alpha \ZZ \equiv \Sph^1_\alpha.
\label{eq:metricconesurface}
\end{equation}
The number $\alpha$ is called the \emph{cone angle} at $p$. Here and throughout the paper we use the convention that $\sn_{\K}$ 
and $\cs_{\K}$ are the unique solutions to the initial value problem $f'' + \K f = 0$ satisfying
\begin{align*}
\sn_{\K} (0) &= 0 & \cs_{\K} (0) &= 1\\
\sn_{\K}'(0) &= 1 & \cs_{\K}' (0) &= 0.
\end{align*}

Accordingly, a three-dimensional cone-manifold is an element $M \in \calI_2$ with $\dim M = 3$ and with constant sectional curvature
$\K$ on its top-dimensional stratum. Its singular locus $\Sigma$ is a (combinatorial) graph which decomposes as a disjoint union 
$\Sigma_0 \cup \Sigma_1$; here $\Sigma_0 = \calV$ is the vertex set and consists of a finite number of points,
and $\Sigma_1 = \calE$ is the edge set and consists of a finite union of smooth arcs each of which is either closed 
or else has endpoints lying in $\Sigma_0$. It is also useful to think of $\Sigma$ as a geodesic network. 
Near any point $p \in \Sigma_1$, the metric $g$ can be written in cylindrical coordinates as
\begin{equation}
d\rho^2 + \sn_{\K}^2 \rho \, d\theta^2 + \cs_{\K}^2 \rho \, dy^2, \qquad \theta \in \Sph^1_\alpha,\ y \in (-a,a) \subset \RR.
\label{eq:metric1conemfd}
\end{equation}
This is a `constant curvature cylinder' over the constant curvature cone $C_{0,1}(\Sph^1_\alpha)$. The number $\alpha$ is still called the \emph{cone angle} or \emph{dihedral angle} of the singular edge, and does not depend of the chosen point $p$ on the edge. Near a singular vertex $p \in \Sigma_0$,
$M$ is a constant curvature cone over a spherical cone-surface $(N,h)$, and here $g$ has the form 
\begin{equation}
dr^2 + \sn_{\K}^2 r \, h.
\label{eq:metric2conemfd}
\end{equation}
The metric $h$ in turn has the form (\ref{eq:metricconesurface}) with $\K = +1$ near each one of its singular points,
so in a conic neighbourhood of a cone point of $N$, the metric $g$ on $M$ has the form
\begin{equation}
dr^2 + \sn_{\K}^2 r \left(ds^2 + \sin^2 s\, d\theta^2\right), \qquad \theta \in \Sph^1_\alpha,\ s \in (0,\epsilon),\ r \in (0,a).
\label{eq:metric3conemfd}
\end{equation}
In particular, each cone point of $N$ corresponds to an edge of $M$.
 
Since we are only concerned with these low-dimensional cases in this paper, the terms cone-surface 
and cone-manifold will always refer to the two- and three-dimensional cases, respectively. 

\subsection*{Synthetic formulation}
We now review the more traditional definitions of cone-surfaces and cone-manifolds `modeled on a geometry', 
following \cite{ThurstonShapes}, but cf.\ the monograph by Hodgson and Kerckhoff \cite{HK2} for an alternate 
and excellent reference on hyperbolic cone-manifolds.  

A {\it geometry} refers to a pair $(X,G)$ consisting of a complete Riemannian $n$-manifold $X$ and a subgroup 
$G \subset \mbox{Isom}\,(X)$. There are no restrictions on $G$, and in particular, it need not be the
full isometry group, nor must it act transitively on $X$. An $(X,G)$-manifold $M$ is a manifold admitting an 
$(X,G)$-atlas, i.e.\ a locally finite open covering $\{\calU_i\}$ of $M$ and maps $\phi_i: \calU_i \to X$, 
such that the transition maps $\phi_i \phi_j^{-1}$ all lie in $G$. Since the elements of $G$ are isometries, 
$M$ possesses a natural Riemannian metric for which each $\phi_i$ is an isometry. 

Given a $n$-dimensional geometry $(X,G)$ ($n>1$) and a point $q$ on $X$, denote by $G_q = \{ g \in G \ |\ g q = q\}$ 
the stabilizer of $q$ in $G$ and $X_q = \{ v \in T_qX \ |\ ||v||=1\}$, the unit sphere in $T_q X$. The group $G_q$ acts 
by isometries on $X_q$, so the pair $(X_q,G_q)$ defines a new geometry, of dimension $n-1$. If $\Omega$ is a subset of $X_q$, 
then for $a$ smaller than the injectivity radius of $X$ at $q$ we define the $(X,G)$-cone of radius $a$, 
\[
C_a(\Omega)=\{\exp_q(tv), 0\leq t < a, v \in \Omega\} \subset X.
\]
Any transition map $\psi \in G_q$ which glues together neighbourhoods $\Omega_1$ and $\Omega_2$ extends in an obvious way 
to a transition map gluing the $(X,G)$-cones $C_a(\Omega_1)$ and $C_a(\Omega_2)$. 
\begin{figure}[ht]
%\hspace*{-0.8cm}
\scalebox{0.5}{\input{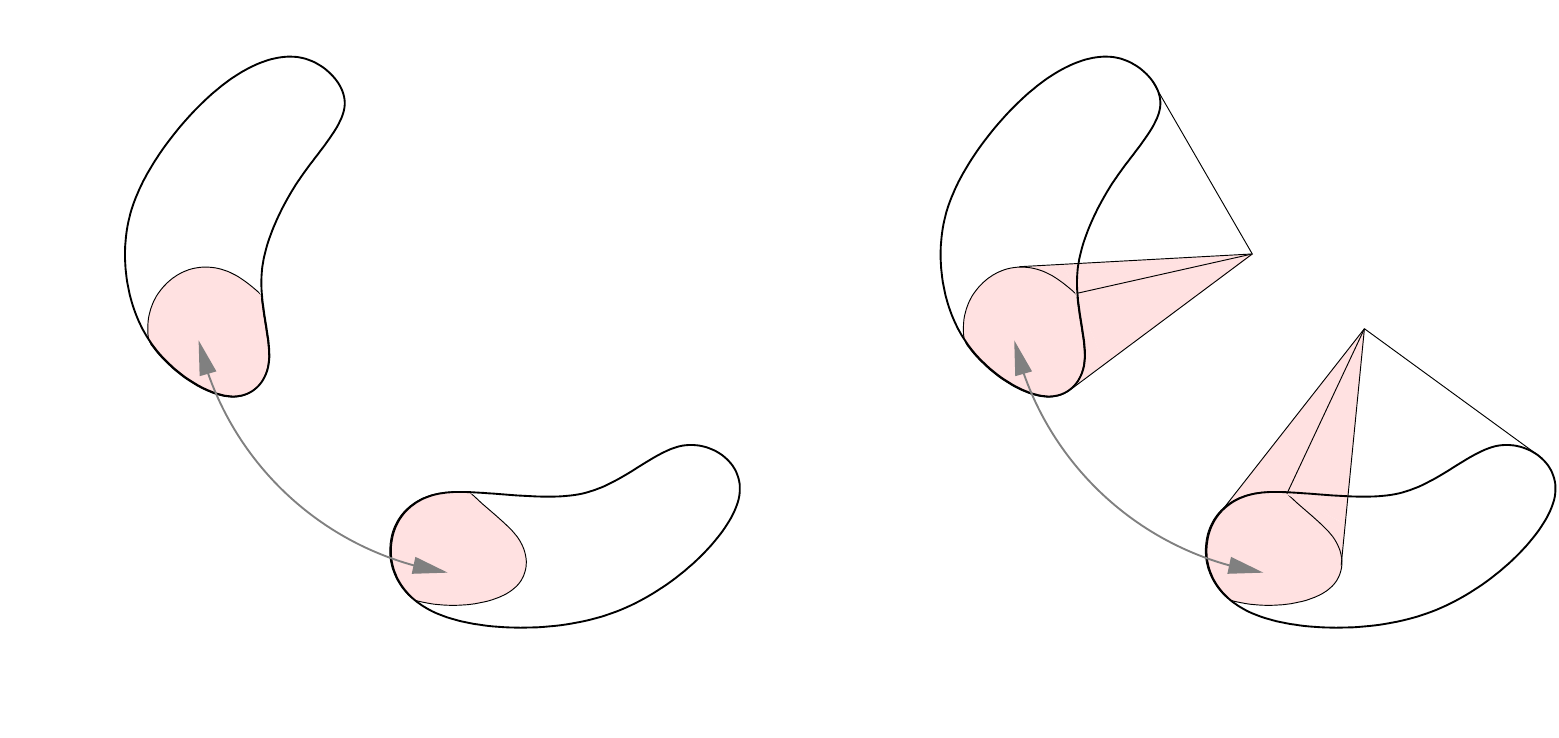_t}}
\caption{Extending a gluing from $X_q$ to $X$}
\end{figure}
This process allows us to define the $(X,G)$-cone associated to any $(X_q, G_q)$-manifold, or inductively, 
any $(X_q,G_q)$-cone-manifold, and thus leads to the following definition:

\begin{Def}
Let $(X,G)$ be a geometry as defined above.
\begin{itemize}
\item If $X$ is of dimension $1$, an $(X,G)$-cone-manifold is an $(X,G)$-manifold; 
\item If $\dim X > 1$, an $(X,G)$-cone-manifold is a complete metric space in which each point $p$ has a neighbourhood 
isometric to an $(X,G)$-cone over a closed, connected, orientable $(X_q,G_q)$-cone-manifold $N_p$ for some $q\in X$; 
the lower dimensional cone-manifold $N_p$ is called the {\em link} of the point $p$.  (The reader should be aware
that this does not coincide with the more standard definition of the link of a cone bundle.)
\end{itemize}
\end{Def}
 
Even if $(X,G)$ is homogeneous, i.e.\ $G$ acts transitively on $X$, this may not be the case for the induced geometry $(X_q,G_q)$, hence it is important not to include the statement that $X$ be a homogeneous space into the definition of a geometry. 

\begin{figure}[ht]
\centering
\input{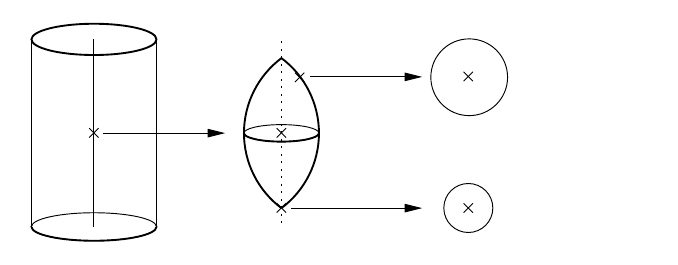_t}
\caption{The inductive steps in the definition of a cone-manifold}
\label{ind}
\end{figure}

An $(X,G)$-cone-manifold is actually an iterated edge space with an additional structure. 
A point in an $(X,G)$-cone-manifold $M$ is regular (i.e.~in the top-dimensional stratum) if its link is the standard unit sphere; otherwise it is 
in the singular locus. Equivalently, a point is regular if it admits a neighbourhood isometric to an open set in $X$. 
The stratification of the singular locus of $M$ is defined by induction as follows. If $p \in M$ is not contained in 
any cone neighbourhood other than the one centered at $p$, then the codimension of $p$ is $n$. Otherwise, $p$ belongs 
to a cone neighbourhood centered at some other point $p_0$. If $N_0$ is the $(X_q,G_q)$-link of $p_0$, then the geodesic ray 
from $p_0$ through $p$ determines a point $p'$ in $N_0$. The codimension of $p$ is then defined inductively as the codimension of $p'$ 
in $N_0$ (in this case the link $N_p$ of $p$ is a suspension over the link $N_{p'}$ of $p'$ in $N_0$). Note that cone-manifolds of dimension $1$
are by definition regular, so the codimension of a singular point is always at least two. The set of points of codimension $k$
forms the $(n-k)$-dimensional stratum $\Sigma_k$; the strata are totally geodesic in $M$, and are 
locally isometric to some $(n-k)$-dimensional totally geodesic submanifold of $X$.

\medskip
The cases of interest in this paper are the following:
\begin{itemize}
\item spherical cone-surfaces, modeled on $(\Sph^2,\mbox{SO}(3))$;
\item three-dimensional hyperbolic cone-manifolds, modeled on $(\mathbb{H}^3,\mbox{SO}_0(1,3))$;
\item three-dimensional Euclidean cone-manifolds, modeled on $(\RR^3, \RR^3 \rtimes \mbox{SO}(3))$.
\item three-dimensional spherical cone-manifolds, modeled on $(\Sph^3,\mbox{SO}(4))$.
\end{itemize}
Note that for each of these three-dimensional geometries, 
the induced link geometry is always spherical; Euclidean or hyperbolic cone-surfaces do not arise here. If 
the link is one-dimensional, the only invariant is its length, i.e.\ the cone angle; for two-dimensional links,
the `solid angle analogue' of cone angle {\it is} the spherical cone-surface structure on the link. 
We elaborate on this below.

In this paper, we restrict attention to Euclidean and hyperbolic cone-manifolds whose cone angles are less 
than $2\pi$. We included the other cases since much of the analysis below carries over directly to these settings, 
although the ultimate infinitesimal rigidity theorem does not.

\subsection*{Spherical cone-surfaces}
As just indicated, to study cone-manifolds with all cone angles less than $2\pi$, we must first
study spherical cone-surfaces with the same angle restriction. This is an interesting story in
its own right, and is described in detail in \cite{MW}. Here are the relevant features of that theory.

The first important fact is that if $N$ is any (compact) cone-surface, then its regular part is 
conformally equivalent to a compact Riemann surface $\overline{N}$ with $\ell$ points removed. (Equivalently, 
although the cone metric is singular at each of the cone points, the conformal structure extends smoothly 
across these points.) This is classical, but see \cite{MT} for a simple proof. 

Next, let $N$ be a spherical cone-surface with cone points $\{q_i\}$ and corresponding cone angles $\alpha_i 
\in (0,2\pi)$, $i = 1, \ldots, \ell$. We claim that if $N$ is orientable, then $\overline{N} = \Sph^2$. 
To see this, recall that the Gauss-Bonnet formula extends to cone-surfaces as
\[
\int_N \K\,dA + \sum_{i=1}^\ell (2\pi - \alpha_i) = 2\pi \chi(\overline{N}).
\]
Since $\K = 1$ and each $\alpha_i < 2\pi$, the left hand side of this equality is strictly positive, 
hence $\chi(\overline{N}) > 0$, which proves the claim.  

For any collection of $\ell \geq 3 $ distinct points $\{q_1, \ldots, q_\ell\} \subset \Sph^2$ and cone 
angles $\alpha_i \in (0,2\pi)$,  there exists a unique spherical cone metric on $\Sph^2$ in the given 
conformal class with cone points at the $q_i$ with the specified cone angles. The existence is due to Troyanov \cite{Tr},
and uniqueness was proved by Luo and Tian \cite{LT}. We denote by $\calM_{\ell}(\Sph^2)$ the moduli space 
of all spherical cone-surface structures (with curvature $+1$) with $\ell$ marked points on $\Sph^2$ and with 
cone angles less than $2\pi$. The result we have just quoted is captured in the identification
\[
\calM_{\ell}(\Sph^2) = \{(q_1, \ldots, q_\ell) \in (\Sph^2)^\ell\ :\  q_i \neq q_j\ \forall \, i \neq j\}/
\mbox{M\"ob} \times (0,2\pi)^\ell, \qquad \ell \geq 3,
\]
where the M\"obius group acts diagonally on the product. Notice that we are not dividing by the action of the 
symmetric group, i.e.\ we regard the cone points as labelled. 

We discuss the cases with few cone points separately. 
It is classical that a spherical cone-surface (with cone angles smaller than $2\pi$) must have at least two cone points, and that $\calM_{2}(\Sph^2) \cong (0,2\pi)$. More explicitly, if $N \in \calM_{2}(\Sph^2)$, then 
the cone metric has an $\Sph^1$-symmetry, so that $\alpha_1 = \alpha_2$, and $N$ has the shape of a football (or 
rugby ball), see figure \ref{ind};  the metric can be expressed globally as \eqref{eq:metricconesurface} with $r\in [0,\pi]$. The three-dimensional cone with constant 
curvature $\K$ over such an $N$ is isometric to the cylinder with constant curvature $\K$ over the two-dimensional 
cone with constant curvature $\K$ (and with the same cone angle), see eq.~\eqref{eq:metric1conemfd}. 
Hence this case never occurs as the link of a 
(`nonremovable') vertex in a cone-manifold. Thus the first really interesting case in terms of three-dimensional 
geometry is when $N$ has three conical points. In this case, we can assume that all points lie on an equator of 
$\Sph^2$ (in the above identification), and by uniqueness, the metric is preserved by reflection across this equator, so that $N$ is 
the double of a spherical triangle with geodesic edges. In particular, $N$ is determined entirely by its cone angles. 

\subsection*{Three dimensional cone-manifolds}
By considering the spherical reduction and link geometry, it is quite easy to see that a neighbourhood of any 
point $p$ in a three-dimensional cone-manifold $M$ with constant curvature $\K$ is isometric to a neighbourhood of a point 
in a space obtained by gluing together some number of tetrahedra with constant curvature $\K$ and with
totally geodesic boundary faces. In such a gluing, the metric extends smoothly across 
the codimension $1$ boundary faces. At points where several edges are glued together, the metric is smooth 
if and only if the sum of the corresponding dihedral angles for each tetrahedron is equal to $2\pi$, otherwise 
the resulting edge is in the singular locus of the cone-manifold and a neighbourhood of this edge is isometric 
to a constant curvature cylinder (\ref{eq:metric1conemfd}). The sum of the dihedral angles is called the
\emph{dihedral angle} (or cone angle) of this edge. A point where several vertices are glued together is 
smooth if and only if the links of these tetrahedral vertices tile $\Sph^2$; otherwise, the link is a 
spherical cone-surface $N$ and must lie 
in the space $\calM_{\ell}(\Sph^2)$ for some $\ell \geq 3$ which is the valence of that vertex in the cone-manifold. 
The cone angle at each cone point of $N$ equals the dihedral angle of the edge coming into that point.

More globally, it follows from the definition that the singular locus $\Sigma$ of a closed three-dimensional 
cone-manifold $M$ is a finite graph, where each vertex is at least trivalent, but possibly with both
endpoints of an edge equal to the same vertex. We have described local geometry around each edge and
vertex, and away from $\Sigma$ the metric is locally isometric to the model geometry.

An important and simple case of the gluing construction is as follows. Let $\calP$ be a convex polyhedron in
$\Sph^3$, $\RR^3$ or $\HH^3$. Its double across all faces is a (spherical, Euclidean or hyperbolic) cone-manifold 
$M$. The edges and vertices of $\calP$ are in bijective correspondence with the edges and vertices of $M$. The
dihedral angle along an edge of $M$ is twice that of the corresponding edge in $\calP$, so the convexity of the
original polyhedron corresponds to the fact that all cone angles of $M$ are less than $2\pi$. The spherical cone 
surface at any vertex $p$ in $M$ is the double of a convex spherical polygon in $\Sph^2$.  Consequently,
rigidity results for $3$-cone-manifolds imply corresponding results for polyhedra. 

\section{Geometric deformation theory}
We now examine the more geometric aspects of the deformation theory of cone-manifolds. The main idea is to localize
near $\Sigma$ and study the deformations of a `tubular neighbourhood' $\calU$ around the singular locus. 
These local cone-manifold structures, which we call \emph{singular germs}, have unobstructed deformation
theory in a sense to be made precise later, hence are quite easy to parameterize. Any cone-manifold determines
a singular germ along its singular locus, but the converse problem of extending a singular germ to
a global compact cone-manifold is much more subtle. We shall return to this question in a later paper. 
Here we use these singular germs as a convenient setting to study the infinitesimal deformations
of cone-manifold structures. 

\subsection*{Singular germs}
Fix a compact, topological graph $\Sigma$ and define $\SG(\Sigma,\K)$ to be the set of all cone metrics defined in a neighbourhood 
of $\Sigma$ with curvature $\K$  and cone angles smaller than $2\pi$, modulo the equivalence relation that two such 
structures are identified if they are isometric (in possibly smaller neighbourhoods). We call any such equivalence class 
$\calS$ the \emph{singular germ} of a cone-manifold structure. Any singular germ $\calS$ is represented by an infinite 
dimensional set of mutually isometric cone metrics. We stress that we do not emphasize explicit uniformizations of 
constant curvature metrics but focus instead on classes of mutually isometric metric tensors. 

We now describe these singular germs synthetically. The singular locus is described entirely by its simplicial structure,
e.g.\ its vertex set $\calV$ and edge set $\calE$, and the length of each of its edges. (Note that unlike a one-dimensional 
simplicial complex in the usual sense, $\Sigma$ may have closed components which contain no vertices; however,
we still call $\Sigma$ a graph or a simplicial complex.) To describe a cone metric in a neighbourhood of its singular locus, we must extend it from the interior of each of the edges and also near the vertices, and these extensions 
must satisfy certain local and global compatibility conditions. 

The extension near (the interior of) any edge $e$ is determined uniquely by the curvature $\K$ and the cone angle $\alpha =
\alpha(e)$ associated to that edge,  as in (\ref{eq:metric1conemfd}). To each vertex $p$, we specify a spherical 
cone-surface $N_p \in \calM_{\ell}(\Sph^2)$, where $\ell$ is the valence of $p$. One additional parameter along each edge 
is needed to completely describe the singular germ. Any edge $e$ joins two  vertices $p_1, p_2 \in \calV$ (where the
case $p_1 = p_2$ is allowed), and determines cone points $q_1 \in N_{p_1}$ and $q_2 \in N_{p_2}$. These must both have 
cone angle $\alpha(e)$, or equivalently, the same `link circle' $\Sph^1_{\alpha(e)}$. Parallel transport along $e$ gives an 
orientation-reversing isometry between these two circles; after fixing one such isometry for each singular edge, any other such identification is identified with a rotation of the circle, i.e.\ a number $\tau(e) \in \Sph^1_{\alpha(e)}$. This is called 
the {\it twist parameter} associated to $e$.

\begin{figure}[ht]
\centering
\scalebox{0.8}{\input{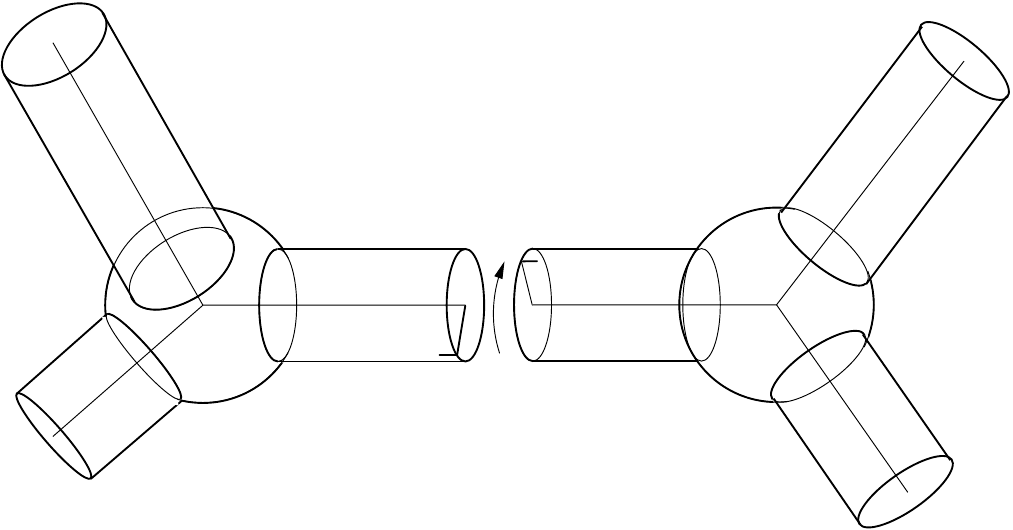_t}}
\caption{The twist parameter}
\label{figtwist}
\end{figure}

Thus each singular germ corresponds to the following data: the curvature $\K$; the graph $\Sigma$ with vertex
set $\calV$ and edge set $\calE$; a length $\lambda(e)$, cone angle $\alpha(e)$ and twist parameter $\tau(e)$ for 
each $e \in \calE$; a spherical cone-surface $N_p$ to each vertex $p \in \calV$. Let $n(p)$ denote the valence of 
each vertex $p$ and set
\[
\calN(\Sigma) = \prod_{p \in \calV} \calM_{n(p)}(\Sph^2).
\]
Then a singular germ on a fixed graph $\Sigma$ with curvature $\K$ is encoded by some element of the space
\[
\SG^*(\Sigma,\K) :=  (\RR^+)^{|\calE|}_\lambda \times (0,2\pi)^{|\calE|}_{\alpha} \times [0,2\pi)^{|\calE|}_\tau \times \calN(\Sigma).
\]
The subset of elements which are realizable as singular germs is the space $\SG(\Sigma,\K)$ defined earlier; this has the 
explicit description as consisting of the subset of data for which the cone angles at the points $q_1\in N_{p_1}$ and $q_2\in N_{p_2}$ corresponding to the 
two ends $p_1$ and $p_2$ of any edge $e$ are the same, and equal $\alpha(e)$. We frequently omit the $\Sigma$ and $\K$ from this notation.
There is a natural map
\[
G: \CM \longrightarrow \SG
\]
associating to any cone-manifold $M$ the singular germ along its singular locus $\Sigma$. 

It is almost tautological to construct a local cone metric from any point in $\SG$. Indeed, given $(\Sigma,\K,\{\lambda(e)\}, 
\{\alpha(e)\}, \{\tau(e)\}, \{N_p\})$, choose $a > 0$ sufficiently small (in particular, $a < \frac12 \lambda(e)$ for all 
$e \in \calE$ and also less than $\pi/\sqrt{\K}$ if $\K > 0$). Now take the cone of curvature $\K$ and radius $a$ over the 
cone-surface $N_p$ for each vertex $p$; attach to these cones singular tubes of constant curvature $\K$  (of sufficiently small 
radius $b$) and of specified cone angles. The result is depicted 
in figure \ref{figtwist}. The final step is to glue these pieces together using the combinatorics of $\Sigma$ and the twist parameter along each edge and to ensure 
that each edge-length is the prescribed one.

\subsection*{Deformations of singular germs}
\label{defsinggerm}
Fix a singular germ $\calS \in \SG$ and choose a representative cone metric $g$. There are several ways to deform 
$\mathcal{S}$ in $\SG$:

\begin{enumerate}
\item One can change the curvature, or the simplicial structure of the singular locus (see fig. \ref{figsplit}). Such deformations 
are quite interesting, and are considered in \cite{Montcouq2}, \cite{PW} and \cite{Weiss4}; however, in this paper we
fix $\Sigma$ and $\K$ once and for all. 
\begin{figure}[ht]
\centering
\input{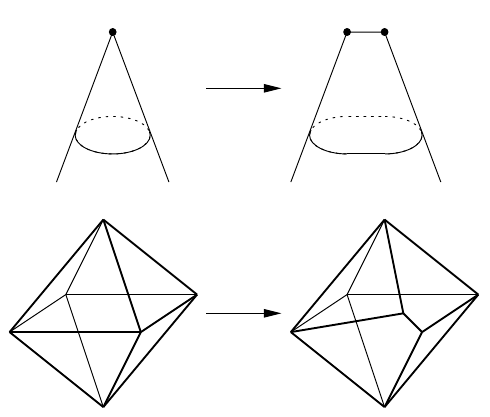_t}
\caption{Deformation of the singular locus in dimension $2$ and $3$}
\label{figsplit}
\end{figure}

\item One can change the length $\lambda(e)$ of any edge of $\Sigma$. We present an explicit family of metrics which does this.
In cylindrical coordinates $(\rho,\theta,y)$ around any point in the interior of $e$, the metric has the form 
(\ref{eq:metric1conemfd}). Let $f(y)$ be a smooth nondecreasing function which vanishes for $y \leq \lambda/4$ and which 
equals $1$ for $y \geq 3\lambda/4$. Now define 
\begin{equation}
g_\e = d\rho^2 + \sn_{\K}^2 \rho\,  d\theta^2 + \cs_{\K}^2 \rho\, \left(1+ \e f'(y)\right)^2 dy^2
\label{eq:sf1}
\end{equation}
in the tube around this edge (and let $g_\e = g$ around all other edges and vertices). The length of $e$ with respect to
$g_\e$ is $\lambda + \e$.

\item One can change the twist parameter around an edge $e$ in much the same way. The family of metrics $g_\e$ is now given by 
\begin{equation}
g_\e = d\rho^2 + \sn_{\K}^2 \rho\, \left(d(\theta + \e f(y))\right)^2 + \cs_{\K}^2 \rho\, dy^2
\label{eq:sf2}
\end{equation}
near $e$ and is left the same elsewhere. The twist parameter for $e$ with respect to $g_\e$ equals $\tau + \e$. 

\item Finally, one can change the cone-surface structures at each of the vertices $p \in \Sigma$. We refer to \cite{MW} for a 
complete description of this moduli space theory of cone-surfaces, but as discussed earlier, the moduli parameters are the 
locations of the various cone points on each spherical link, and the cone angles at these points. Each moduli space 
$\calM_{\ell}(\Sph^2)$ is smooth, hence so is $\calN(\Sigma)$. Let $h_\e$ be a path of metrics representing a curve 
in this space; the corresponding path of metrics in cones around the vertex set is then given by 
\begin{equation}
g_\e = dr^2 + \sn_{\K}^2 r\, h_\e \label{eq:sf3}
\end{equation} 
If the deformation $h_\e$ leaves all cone angles unchanged, then the variation of $g_\e$ is localized in a neighbourhood 
of the vertex set and so $g_\e=g_0$ in a neighbourhood of the edge set. However, a change of cone angle in any $N_p$ 
ripples throughout the singular germ; in particular the cone angles of some edges are deformed, and the metric in cylindrical 
coordinates near those edges is given by
\begin{equation}
g_\e = d\rho^2 + \sn_{\K}^2 \rho\, (1+a(\e))^2d\theta^2 + \cs_{\K}^2 \rho\, dy^2
\label{eq:sf4}
\end{equation}
where $a(\e)$ is the modification of the cone angle.
\end{enumerate}

The expression \eqref{eq:sf3} is not canonical; there is a wide latitude in choosing the representative metric tensor 
$h_\e$ on a given spherical cone-surface $N$ and the most obvious choice is not the best one for our purposes. 
We discuss this further now.

Fix a spherical cone-surface $N$ with $\ell$ conic points. We can always write any representative metric tensor $h$ in the form $e^{2\phi}F^*\overline{g}$, where $\overline{g}$ is the standard $\mbox{SO}(3)$ invariant metric
tensor on $\Sph^2$, $F$ is a diffeomorphism of $\Sph^2$ and $\phi$ is a smooth function away from the conic
points and has a logarithmic singularity (with coefficient determining the cone angle) at each cone point.
Indeed, near each $q_j$, $\phi = \beta \log s + \psi$ where $\psi$ is smooth, $s$ is the spherical distance
to $q_j$ and $2\pi(1+\beta) = \alpha$ is the cone angle there, see \cite{MW}.  

If $\sigma_\e$ is a family of spherical cone-surface structures on $N$, then the `obvious' choice is
to choose a curve of $\ell$-tuples $q(\e) = (q_1(\e), \ldots, q_\ell(\e))$ fixing a choice of locations of the 
conic points (modulo M\"obius transformations) in this deformation and then write $h_\e = e^{2\phi_\e}\overline{g}$,
where $\phi_\e$ is singular at the $q_j(\e)$. However, the differential of this family at $\e=0$ blows up
like the inverse of distance to the $q_j$ on $\Sph^2$. We prefer to find another representative for this
element by representing $h_\e = e^{2\phi_\e}F^*_\e \overline{g}$ where $F_\e$ is a diffeomorphism of $\Sph^2$ 
which maps a neighbourhood of $q_j(0)$ to a neighbourhood of $q_j(\e)$, 
for each $j$, isometrically with respect to $\overline{g}$, and where the singularities of the $\phi_\e$
remain fixed at the points $(q_1(0), \ldots, q_\ell(0))$. 

Finally, the corresponding deformation of the cone metric near the vertex $p \in M$ for which $N$ is
the link can be represented in the form
\begin{equation}
dr^2 + \sn_{\K}^2\, r\, e^{2\phi_\e}F_\e^* \overline{g},
\label{eq:sf5}
\end{equation}
where $\phi_\e$ and $F_\e$ are as described above. 

\begin{Def}
Let $\calS_\e$ be any family of singular germs (for some fixed $\Sigma$ and $\K$). We say that a family of metrics $g_\e$
representing this deformation is in standard form if it is given in the neighbourhoods of each edge and vertex by the 
formul\ae\ \eqref{eq:sf1}, \eqref{eq:sf2}, \eqref{eq:sf4} and \eqref{eq:sf5}. 
\label{def:sf}
\end{Def}

\subsection*{Infinitesimal deformations of  singular germs}
The space of infinitesimal deformations of singular germs (on a fixed graph $\Sigma$ and with curvature $\K$) 
is simply the tangent space of $\SG(\Sigma,\K)$ at some given $\calS$:
\[
T_{\calS}\SG := \left\{ \dot{g} = \left. \frac{d\,}{d\e}\right|_{\e=0} g_\e :\ g_\e \in \SG\ \mbox{for}\ 
|\e| < \e_0,\ g_0 \ \mbox{represents}\  \calS \right\}/\sim,
\]
where $\dot{g} \sim \dot{g}'$ if these two elements correspond to paths of metrics $g_\e$ and $g_\e'$ which are mutually 
isometric for each $\e$ small enough (or more generally, such that there exists a one-parameter family of diffeomorphisms
$\phi_\e$ for which $\left.\del_\e ( \phi_\e^* g_\e - g_\e')\right|_{\e=0} = 0$). This contains two distinguished subspaces: 
\[
{\mathbb V}_\calS := \{ \dot{g}\in T_\calS \SG: \alpha(e,g_\e) = \alpha(e,g_0)\ \forall\, e \in \calE\ 
\mbox{and}\ |\e| < \e_0\}/\sim
\]
consisting of derivatives of deformations which preserve all dihedral angles (the notation $\alpha(e,g)$ indicates
that the dihedral angle at $e$ is to be measured with respect to $g$), and 
\[
{\mathbb W}_\calS := {\mathbb V}_\calS \cap \{ \dot{g}\in T_\calS \SG: N(p,g_\e) = N(p,g_0)\ \forall\, p \in \calV\ 
\mbox{and}\ |\e| < \e_0\}/\sim
\]
which is the subset of $\mathbb{V}_\calS$ consisting of derivatives of deformations which preserve all spherical links (the notation $N(p,g)$ indicates the spherical cone-surface structure on $N_p$ induced by $g$).

Using Definition \ref{def:sf}, we can calculate representatives for the elements $[\dot{g}] \in T_{\calS}\SG$. Indeed, by differentiating each family of standard form deformations with respect to $\e$ we obtain a local description of the basis elements of this space. These are:
\begin{equation*}
\begin{split}
\dot{g}^{\lambda}_e & =  \cs_{\K}^2 \rho\,  f'(y)\,  dy^2, \quad \dot{g}^\tau_e  =  \sn_{\K}^2 \rho\, f'(y) \, dy d\theta, \quad
\dot{g}^\alpha_e =  \sn_{\K}^2 \rho\, d\theta^2 \\  & \mbox{and} \quad \dot{g}_p^{N}(\dot{\phi},X) = \sn_{\K}^2 r\, (
2 \dot{\phi}\, g + e^{2\phi}\calL_X \overline{g});
\end{split}
\end{equation*}
here $X$ is the infinitesimal generator of the family of diffeomorphisms $F_\e$ at $\e=0$. Thus $\dot{g}^\lambda_e$,
$\dot{g}^\tau_\e$ and $\dot{g}^\alpha_e$ correspond to changing the length, twist parameter and cone angle along an edge $e$,
while $\dot{g}_p^{N}(\dot{\phi},X)$ represents the effect of changing the spherical cone-surface structure on the link of $p$.
(The coefficient of the logarithmic singularity in $\dot{\phi}$ encodes the change of cone angles while $X$ represents the changes 
of position of the conic points.)  

As already noted, if $[\dot{g}]$ preserves all dihedral angles, then it can be written as a sum of noninteracting deformations
localized along each edge and vertex. In other words,
\begin{equation*}
{\mathbb W}_{\calS} \ni \dot{g} \sim \sum_{e \in \calE} \left(a_e \dot{g}^{\lambda}_e 
+ b_e \dot{g}^{\tau}_e \right)\label{eq:bW}
\end{equation*}
and
\begin{equation*}
{\mathbb V}_{\calS} \ni \dot{g} \sim \sum_{e \in \calE} \left(a_e \dot{g}^{\lambda}_e 
+ b_e \dot{g}^{\tau}_e \right) + \sum_{p \in \calV} \dot{g}_p^N(\dot{\phi},X)
\label{eq:bV}
\end{equation*}
where each $(\dot{\phi},X)$ represents an infinitesimal change of spherical cone metric in standard form on $N_p$. 
In this case, because the angles are fixed, $\dot{\phi}$ is smooth on $\Sph^2$. The infinitesimal deformations which
correspond to a variation of dihedral angles also include terms of the form $c_e \dot{g}^{\alpha}_e$ as well as 
$\dot{g}_p^N(\dot{\phi},X)$ where $\dot{\phi}$ has a logarithmic singularity. 

\begin{Prop} \label{prop:stdl2}
Every infinitesimal deformation $\dot{g} \in T_{\calS} \SG$ in standard form is represented by a symmetric $2$-tensor in 
the neighbourhood $\calU$ of $\Sigma$ which is polyhomogeneous and lies in $L^2(\calU; g)$ (where $g$ is the cone metric representing $\calS$).
However, the covariant derivative $\n \dot{g}$ lies in $L^2$ if and only if it corresponds to a deformation of 
metrics which leaves the dihedral angles invariant.
\end{Prop}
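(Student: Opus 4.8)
The plan is to verify the two assertions by direct computation on each of the four model building blocks listed after Definition~\ref{def:sf}, since every infinitesimal deformation in standard form is, by construction, a finite sum of the local tensors $\dot g^\lambda_e$, $\dot g^\tau_e$, $\dot g^\alpha_e$ and $\dot g^N_p(\dot\phi,X)$, each supported in a fixed tube around one edge or one vertex. First I would record the relevant volume forms: near an edge the metric \eqref{eq:metric1conemfd} has $dV_g = \sn_\K\rho\,\cs_\K\rho\,d\rho\,d\theta\,dy$, and near a vertex \eqref{eq:metric2conemfd} gives $dV_g = \sn_\K^2 r\cdot dV_h$ where $dV_h$ is the spherical cone-surface volume; in the worst case \eqref{eq:metric3conemfd} this is $\sn_\K^2 r\,\sin s\,dr\,ds\,d\theta$. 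With these in hand the $L^2$ statement is straightforward: for the edge deformations the pointwise norm $|\dot g^\lambda_e|_g$, $|\dot g^\tau_e|_g$, $|\dot g^\alpha_e|_g$ is a bounded function of $\rho$ near $\rho=0$ (each is $\sn_\K^{2}\rho$ or $\sn_\K\rho$ times a coframe element whose dual has norm $\sn_\K^{-1}\rho$ or $\cs_\K^{-1}\rho$, and the surviving powers of $\sn_\K\rho$ are nonnegative), so these tensors are in fact bounded, hence in $L^2$ of the tube. For $\dot g^N_p(\dot\phi,X)$ one uses that $2\dot\phi\,g + e^{2\phi}\calL_X\overline g = \sn_\K^2 r$ times a $2$-tensor on $N_p$ whose $h$-norm is bounded by $C(|\dot\phi| + |\n^h X|)$; since $\dot\phi$ and $X$ are smooth on $\Sph^2$ away from the cone points and have at worst a logarithmic singularity (resp.\ its gradient) there, the pointwise $g$-norm of $\dot g^N_p$ is $O(|\log s|)$ as $s\to 0$, which is still square-integrable against $\sin s\,ds$. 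Polyhomogeneity is inherited from the explicit formulas together with the polyhomogeneous structure of $\phi$ recalled from \cite{MW} ($\phi=\beta\log s+\psi$, $\psi$ smooth).

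For the second assertion I would compute $|\n\dot g|_g^2$ for each model tensor and test integrability against the same volume forms. The key point is a scaling count. Along an edge, the covariant derivative in the $\rho$-direction of $\dot g^\alpha_e = \sn_\K^2\rho\,d\theta^2$ produces, after lowering the $\sn_\K^2\rho$ weights against the metric, a term behaving like $\rho^{-1}$ in pointwise norm (morally, $\n\dot g^\alpha_e \sim \frac{d\rho}{\rho}\otimes(\text{unit tensor})$ near $\rho=0$); squaring gives $\rho^{-2}$, and $\int_0^b \rho^{-2}\cdot \sn_\K\rho\,\cs_\K\rho\,d\rho \sim \int_0 \rho^{-1}\,d\rho = \infty$. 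So $\dot g^\alpha_e \notin \mathrm{dom}$ of the energy, i.e.\ $\n\dot g^\alpha_e \notin L^2$. By contrast, for $\dot g^\lambda_e = \cs_\K^2\rho\,f'(y)\,dy^2$ and $\dot g^\tau_e = \sn_\K^2\rho\,f'(y)\,dy\,d\theta$ the derivatives that appear carry an extra compensating factor of $\sn_\K\rho$ or $\cs_\K\rho$ coming from the Christoffel symbols of the warped product \eqref{eq:metric1conemfd} (which are $O(\rho)$ for the relevant mixed components, e.g.\ $\Gamma^\rho_{\theta\theta} = -\sn_\K\rho\cs_\K\rho$), so $|\n\dot g^\lambda_e|_g$ and $|\n\dot g^\tau_e|_g$ remain bounded near $\rho=0$ and these are in $L^2$. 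The same kind of computation near a vertex shows that $\n\dot g^N_p(\dot\phi,X)$ has pointwise norm $O(r^{-1}|\log s|) + O(|\log s|/s)$ depending on whether the $r$- or $N$-derivatives dominate; the $r$-derivative term squared integrates $r^{-2}\cdot r^2\,dr = dr$, which is fine, and the $N$-direction contributes $|\n^h(\ldots)|^2\cdot dV_h$, which is square-integrable precisely when $\dot\phi$ is smooth at the cone points — that is, precisely when no cone angle is being varied. When $\dot\phi$ has a genuine logarithmic singularity its gradient is $O(1/s)$, $|\n^h\dot\phi|^2 = O(1/s^2)$, and $\int_0 s^{-2}\sin s\,ds = \infty$.

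Assembling these: an infinitesimal deformation in standard form has $\n\dot g \in L^2(\calU)$ if and only if in its decomposition there is no $\dot g^\alpha_e$ term and each $\dot g^N_p(\dot\phi,X)$ has $\dot\phi$ smooth at the cone points — and by the discussion preceding the Proposition, these are exactly the conditions defining the angle-preserving deformations (the class $\mathbb{V}_\calS$, and in fact \eqref{eq:bV} shows the angle-preserving ones are sums of $\dot g^\lambda_e$, $\dot g^\tau_e$ and $\dot g^N_p$ with smooth $\dot\phi$). One subtlety to handle carefully is that the decomposition \eqref{eq:bV} is only an equivalence-class statement ($\dot g \sim \ldots$), so strictly one should note that membership of $\n\dot g$ in $L^2$ near $\Sigma$ is invariant under the equivalence relation (adding $\calL_V g$ for a vector field $V$ smooth up to and along $\Sigma$ does not change the $L^2$-ness of the covariant derivative near $\Sigma$), which legitimizes passing to the standard-form representative; alternatively one works only with the explicit standard-form representatives throughout.

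I expect the main obstacle to be bookkeeping rather than conceptual: one must keep precise track, in the warped-product metrics \eqref{eq:metric1conemfd}--\eqref{eq:metric3conemfd}, of which components of $\n\dot g$ acquire compensating positive powers of $\sn_\K\rho$, $\cs_\K\rho$ or $\sin s$ from the Christoffel symbols and which do not, since the entire $L^2$-versus-not dichotomy turns on a single borderline power (the divergence is logarithmic in each bad case, so there is no room for a sloppy exponent). The vertex computation with the depth-two singularity \eqref{eq:metric3conemfd} — where one is differentiating a tensor that is itself already singular at an edge of $N_p$ while sitting inside a cone — is the most delicate, and is exactly the place where the ``slightly more complicated analytic work'' mentioned in the introduction enters; here it is cleanest to first establish the claim for the smooth link factor (reducing to the cone-surface moduli computation of \cite{MW}) and then note that the radial cone direction contributes only the harmless $r^{-1}$ scaling.
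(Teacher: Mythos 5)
Your proposal is correct and follows essentially the same route as the paper, which proves the proposition by exactly this kind of direct computation on the standard-form model tensors; the paper only writes out the decisive case, namely that $\dot g^\alpha_e=\sn_\K^2\rho\,d\theta^2$ is bounded (hence $L^2$) while $\n\dot g^\alpha_e=-\cs_\K\rho\,\sn_\K\rho\,d\theta\otimes(d\rho\otimes d\theta+d\theta\otimes d\rho)$ has pointwise norm $\sim\cs_\K\rho/\sn_\K\rho$ and so just fails to be square-integrable against $\sn_\K\rho\,\cs_\K\rho\,d\rho$. Your additional checks (the vanishing/boundedness of the Christoffel contributions for $\dot g^\lambda_e$ and $\dot g^\tau_e$, the $\log$-singularity analysis of $\dot\phi$ at the vertices, and the remark that the choice of representative $F_\e^*\overline g$ with $F_\e$ isometric near the cone points is what keeps the vertex terms tame) are exactly the details the paper leaves to the reader.
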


We refer to \S \ref{sec:phg} for the definition of polyhomogeneity. The proof follows from the observation that while $\sn_\K^2\rho
 \, d\theta^2$ lies in $L^2$ near an edge, its covariant derivative equals $-\cs_\K \rho\, \sn_\K \rho\, 
d\theta \otimes (d\rho \otimes d\theta + d\theta \otimes d\rho)$ and this is not in $L^2$ near $\rho=0$. 

This result is the key to our proof of the infinitesimal Stoker conjecture for cone-manifolds since it shows how to
distinguish the angle-preserving deformations from all others by a simple analytic criterion. This explains the importance
of putting infinitesimal deformations into standard form. 

We may now finally rephrase Theorem \ref{thm:mainthm} and provide a succinct statement of the infinitesimal Stoker conjecture.

\begin{Thm} Let $(M,g_\e)$ be a family of cone-manifolds with fixed singular locus $\Sigma$ and curvature $\K$. Let $\dot{g} 
\in T_{\calS}\SG$ be the derivative at $\e=0$ of the corresponding family of singular germs $G(g_\e)$, where $G(g_0) = \calS$. 
Suppose that the infinitesimal deformation preserves the dihedral angles, i.e.~$\dot{g} \in \mathbb{V}_\calS$.
If $\K =  0$, then $\dot{g} \in  \mathbb{W}_\calS$; if $\K =  -1$, then $\dot{g} \sim 0$ on all of $M$.
\label{th:sc}
\end{Thm}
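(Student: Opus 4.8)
The plan is to realize any dihedral-angle-preserving infinitesimal deformation $\dot g$ as a symmetric $2$-tensor on the smooth part of $M$, gauge-fix it to a solution of the linearized Einstein equation, establish its precise asymptotics near the singular locus $\Sigma$, and then run a Bochner-type integration-by-parts argument to conclude it must vanish (in the hyperbolic case) or be exact (in the Euclidean case). First, I would take a representative of $\dot g$ which is in standard form near $\Sigma$ — using Definition~\ref{def:sf} and the ensuing computation of the basis tensors $\dot g^\lambda_e$, $\dot g^\tau_e$, $\dot g^N_p(\dot\phi,X)$ — and extend it arbitrarily (but smoothly) across the rest of the compact manifold $M$. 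Because $\dot g\in\mathbb V_\calS$, Proposition~\ref{prop:stdl2} guarantees that both $\dot g$ and $\nabla\dot g$ lie in $L^2(M;g)$, and that $\dot g$ is polyhomogeneous near $\Sigma$; this is the crucial point that separates the angle-preserving case from the general one.

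Next I would put $\dot g$ into Bianchi (divergence) gauge: writing $h = \dot g + \mathcal L_V g$ for a suitable vector field $V$ chosen so that $\beta_g(h)=0$, where $\beta_g$ is the Bianchi operator, the linearized Einstein equation in Bianchi gauge becomes an elliptic equation $(\nabla^*\nabla - 2\mathring R)h = 0$ (up to the appropriate curvature normalization, since $M$ is Einstein with $\mathrm{Ric}=2\K g$). The gauge change must be performed carefully near $\Sigma$ so as not to destroy the $L^2$ and polyhomogeneity properties; this is where the analytic theory of conic and iterated-edge operators enters, and where the depth-$2$ structure at the vertices $\Sigma_0$ forces genuinely new work beyond the isolated-conic / simple-edge case. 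One shows, via a regularity theorem for the linearized gauged Einstein operator on the iterated edge space, that $h$ admits an asymptotic expansion near each stratum of $\Sigma$, and by inspecting the leading exponents one verifies that $h$ lies in the domain of a semibounded self-adjoint extension of $\nabla^*\nabla - 2\mathring R$ — equivalently, that all boundary terms arising below actually vanish.

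With $h$ in this good gauge and domain, the Bochner step is as follows. Pair the equation against $h$ and integrate by parts over $M\setminus\Sigma$: the boundary contributions over small tubes around $\Sigma$ tend to zero by the asymptotic analysis just described, yielding
\[
\int_M \left( |\nabla h|^2 - 2\langle \mathring R h, h\rangle \right)\, dV_g = 0.
\]
One then invokes the sign of the curvature term: for the relevant symmetric $2$-tensors on a constant-curvature background the Weitzenböck/curvature operator $\mathring R$ is controlled so that $-2\langle \mathring R h,h\rangle \geq c\,|h|^2$ with $c>0$ when $\K=-1$, forcing $h\equiv 0$ and hence $\dot g = -\mathcal L_V g$, i.e. $\dot g\sim 0$; when $\K=0$ the inequality degenerates to $c=0$, so one only gets $\nabla h\equiv 0$, i.e. $h$ is parallel, and a separate argument (analyzing parallel tensors on a flat cone-manifold together with the admissible asymptotics near the codimension-$3$ vertices) shows that such $h$ corresponds precisely to deformations preserving the spherical links $N_p$, giving $\dot g\in\mathbb W_\calS$. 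The main obstacle I expect is the second step: proving the regularity/asymptotic-expansion theorem for the gauged linearized operator at the depth-$2$ vertices and checking that the gauge-fixing vector field $V$ preserves the required mapping properties there, since the standard conic edge calculus does not directly apply and the interaction between the edge asymptotics and the vertex cross-section (a spherical cone-surface in $\calM_\ell(\Sph^2)$) must be handled by hand.
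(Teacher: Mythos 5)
Your overall architecture (standard form, Bianchi gauge via Theorem~\ref{thm:gauge}, regularity at $\Sigma$, then a Bochner argument) matches the paper's, but the Bochner step as you state it fails on a sign. For a trace-free symmetric $2$-tensor $h$ on a constant curvature background one has $\RO h=-\K h$, so in the hyperbolic case $\K=-1$ the identity you propose,
\[
\int_M \left( |\n h|^2 - 2\langle \RO h, h\rangle \right) dV_g = 0,
\]
reads $\|\n h\|^2 - 2\|h\|^2=0$ and yields nothing: the curvature term has the \emph{unfavorable} sign, which is exactly why hyperbolic rigidity is not a one-line Bochner computation. The paper's key move (Calabi--Weil--Koiso) is to regard $h$ as a $T^*M$-valued $1$-form and use the Weitzenb\"ock formula $\na\n h=(\delta^\n d^\n+d^\n\delta^\n)h+\K((\tr h)g-3h)$; combined with $\delta h=0$, $\tr h=0$, $\K=-1$ this converts $\na\n h-2h=0$ into $\delta^\n d^\n h+h=0$, and pairing with $h$ gives $\|d^\n h\|^2+\|h\|^2=0$, hence $h\equiv 0$. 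Without this change of Laplacian the argument does not close. Relatedly, your justification of the integration by parts is not available: the paper explicitly does \emph{not} establish that $h\in\calD_{\Fr}(\na\n)$ (the full Hessian $\n\n\eta$ is not controlled by Theorem~\ref{thm:gauge}, only $\n d\eta$ and $d\delta\eta$ are), so one cannot show $\n h\in L^2$. Instead one shows $d^\n h\in L^2$ via $2\delta^*\eta=2d^\n\eta-d\eta$, using that $(d^\n)^2$ is an order-zero curvature operator, and then justifies the pairing $\langle\delta^\n d^\n h,d^\n\eta\rangle=\langle d^\n h,(d^\n)^2\eta\rangle$ by approximating $\eta$ in $\calD_{\min}(\n)$.

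The Euclidean endgame also has a gap. The correct conclusion of the (corrected) Bochner argument is $d^\n h=0$ and $\delta^\n h=0$, i.e.\ $h$ is a \emph{harmonic} $T^*M$-valued $1$-form --- not $\n h=0$, which again would require the unavailable control on $\n h$. And the passage from this to the preservation of the spherical links is a substantive geometric argument you only gesture at: one identifies $T^*M_\reg$ with the translation subbundle $E_1$ of the flat bundle $E$ of infinitesimal isometries, interprets $\tfrac12 h$ as the class in $H^1(M_\reg;E)\simeq H^1(\pi_1(M_\reg);\mathrm{Ad}\circ\rho)$ of the deformation of the holonomy representation, concludes that $\dot\rho$ takes values in infinitesimal translations only, and then restricts to the link of each vertex (whose holonomy fixes a point and is therefore unchanged to first order by a translational cocycle). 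That step is where the conclusion $\dot g\in\mathbb{W}_\calS$ actually comes from.
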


Proposition~\ref{prop:stdl2} states that all infinitesimal deformations which preserve dihedral angles have polyhomogeneous 
(at $\Sigma$) representatives $\dot{g}$ such that both $\dot{g}$ and $\nabla \dot{g}$ lie in $L^2$; the proof of 
Theorem~\ref{th:sc} shows that in the hyperbolic case, any global infinitesimal deformation which has this property must be trivial,
and hence the corresponding family of metrics $g_\e$ must all be isometric to first order. Another way to state this is that any 
nontrivial deformation of a hyperbolic cone-manifold $(M,g)$ must necessarily change the cone angles, and that no 
representative $\dot{g}$ for the corresponding infinitesimal deformation can lie in $L^2$ along with its covariant derivative. 
The proof in the Euclidean case involves showing that the infinitesimal deformation is a harmonic $1$-form with values
in $T^*M$, and then interpreting this conclusion geometrically.

\section{Analytic deformation theory}
In this section we recall a framework from geometric analysis for studying the deformation theory of cone manifolds. 
The monograph \cite{Besse} contains a comprehensive (albeit now somewhat outdated) review of deformation theory 
of Einstein metrics; we refer also to \cite{Biquard}, which provides a closer guide for the treatment here.

\subsection*{The Einstein equation and the Bianchi gauge}
If $M$ is a manifold of dimension $n$ with a metric $g$ of constant sectional curvature $\K$, then $g$ 
satisfies the Einstein equation $E(g)=0$, where $E(g):=\Ric(g) - (n-1)\K g$; when $\dim M = 2$ or $3$, the converse is true as well. 
Hence in these low dimensions we may apply formalism developed for the Einstein equation to study cone-manifolds.

The Einstein equation is diffeomorphism invariant, so for any metric tensor which satisfies this equation there is
an infinite dimensional family of nearby metrics which also do, namely anything of the form $F^*g$ where 
$F$ is a diffeomorphism  of the underlying space. A one-parameter family $g_\e$ of 
deformations of a given Einstein metric $g = g_0$ is called trivial if there exists a one-parameter family of 
diffeomorphisms $F_\e$ with $F_0 = \mbox{Id}$ and such that $g_\e = F_\e^*g$. 

If $g_\e$ is a one-parameter family of Einstein metrics (i.e.~$E(g_\e) =0$ for all $\e$) then the derivative $k = \frac{d}{d\e}g_\e|_{\e=0}$ satisfies $DE_{g_0}(k)=0$; any such symmetric $2$-tensor is called an infinitesimal Einstein deformation of $g_0$. 
To obtain a clean formula for the linearization $DE_g$ of the Einstein equation, let us introduce the 
Bianchi operator $B^g$, which carries symmetric $2$-tensors to $1$-forms, 
\[
B^g(k) = \delta^g k + \frac12 d\, \tr^g \, k.
\]
Here $\tr^g \, k = k^i_{\,i} = g^{ij}k_{ji}$ designs the trace of $k$ with respect to $g$, and $(\delta^g k)_i = -(\n^g k)^j_{\,ji}$ is the operator $\na$ applied to symmetric $2$-tensors. 
Note that $B^g(g) = 0$ for any metric $g$, and more interestingly, $B^g(\Ric(g)) = 0$ as well, which
is simply the contracted second Bianchi identity. Now define the operator $L^g$ which acts on a symmetric 
$2$-tensor $k$ by
\begin{equation*}\label{eq:Lg1}
L^g k  = \nabla^* \nabla k - 2 \RO k \ +  \Ric \circ k + k \circ \Ric - 
2 \, (n-1)\K \, k;
\end{equation*}
where 
\[
(\RO k)_{ij} = R_{ipjq}\, k^{pq}, \qquad (\Ric \circ k)_{ij} = 
\Ric_i^{\ p}\, k_{pj}, \qquad (k \circ \Ric)_{ij} = k_i^{\ p} \,\Ric_{pj},
\]
and all curvatures and covariant derivatives are computed relative to $g$.  In particular, when $\Ric(g) = (n-1){\K} g$, then 
\[
L^g = \nabla^* \nabla - 2 \RO;
\]
if $g$ has constant sectional curvature $\K$ then the action of $\RO$ is scalar on the pure-trace and trace-free parts of $k$, and
\begin{equation}\label{eq:Lg2}
L^g k = \na \n k +2\K (k- \tr^g(k)g).
\end{equation}
In any case, the linearized Einstein operator has the expression
\begin{equation*}
DE_g = \frac{1}{2}L^g -  (\delta^g)^*B^g;
\end{equation*}
here $((\delta^g)^* \omega)_{ij} = \frac12 ((\n^g \omega)_{ij}+(\n^g \omega)_{ji})$ is the adjoint of $\delta^g$ defined above.
Clearly, any symmetric $2$-tensor $k$ of the form 
\[
k = \frac{d\,}{d\e}F_\e^* g |_{\e=0} = \calL_X g = 2 (\delta^g)^* \omega 
\]
satisfies $DE_g(k) = 0$. Here $F_\e$ is a family of diffeomorphisms with $F_0 = \mbox{Id}$,
$X$ is the generating vector field and $\omega$ its dual $1$-form. 
\begin{Def}
Let $g$ be an Einstein metric. An infinitesimal Einstein deformation of $g$ is any symmetric $2$-tensor $k$ which 
satisfies $DE_g(k) = 0$. It is called trivial if $k = 2 (\delta^g)^* \omega$ for some $1$-form $\omega$.
\end{Def}

The Bianchi operator provides a gauge in which the Einstein operator is elliptic, and in our case positive; let us recall briefly how 
this goes in the compact smooth case. Let $g$ be Einstein and set
\[
N^g(k) := \Ric(g+k) - (n-1)\K (g+k) + (\delta^{g+k})^*B^g(k).
\]
Then $N^g(k) = 0$ if $g+k$ is Einstein and $B^g(k) = 0$, but conversely, as proved in \cite{Biquard}, 
\begin{Prop}
If $M$ is a compact smooth manifold and $N^g(k) = 0$, then $g+k$ is Einstein and in Bianchi gauge, i.e.\ $B^g(k) = 0$, 
provided $\Ric(g+k)$ is negative. If $\Ric(g+k) \equiv 0$, then $B^g(k)$ is parallel. 
The same conclusions are true when $M$ is a cone-manifold provided the integrations by parts 
in the proof indicated below can be justified.
\end{Prop}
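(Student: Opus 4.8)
The plan is to run the standard Bianchi-gauge argument: assume $N^g(k)=0$, apply $B^g$ to this equation, and deduce an elliptic equation for the $1$-form $\omega := B^g(k)$ whose only solutions (under the curvature sign hypothesis) are zero, or parallel when $\mathrm{Ric}(g+k)\equiv 0$. First I would recall the key structural identity (due to the second Bianchi identity): $B^{g+k}\big(\mathrm{Ric}(g+k) - (n-1)\K(g+k)\big) = 0$, so that applying $B^{g+k}$ to $N^g(k)=0$ kills the first two terms and leaves
\begin{equation*}
B^{g+k}\left( (\delta^{g+k})^* B^g(k) \right) = 0.
\end{equation*}
Next I would invoke the Weitzenb\"ock-type formula for the operator $\omega \mapsto 2\, B^{g'}(\delta^{g'})^*\omega$ on $1$-forms with respect to the metric $g' = g+k$: this operator equals $\nabla^*\nabla - \mathrm{Ric}(g')$ (acting on $1$-forms, with all geometric quantities taken with respect to $g'$). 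Hence $\omega = B^g(k)$ satisfies $(\nabla^*\nabla - \mathrm{Ric}(g'))\,\omega = 0$.

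With this in hand, I would pair the equation with $\omega$ itself and integrate over $M$:
\begin{equation*}
\int_M \left( |\nabla \omega|^2 - \langle \mathrm{Ric}(g')\,\omega, \omega\rangle \right)\, dV_{g'} = 0.
\end{equation*}
If $\mathrm{Ric}(g') < 0$ everywhere, both terms in the integrand are nonnegative, forcing $\nabla\omega = 0$ and $\langle \mathrm{Ric}(g')\omega,\omega\rangle = 0$; combined with the strict negativity, this gives $\omega \equiv 0$, i.e.\ $B^g(k)=0$, and then $N^g(k)=0$ reduces to $\mathrm{Ric}(g+k) = (n-1)\K(g+k)$, so $g+k$ is Einstein. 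If instead $\mathrm{Ric}(g')\equiv 0$, the Ricci term drops out and we conclude only $\nabla\omega = 0$, i.e.\ $B^g(k)$ is parallel, which is the weaker assertion in that case. This establishes the claimed conclusions on a compact smooth manifold.

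For the cone-manifold case, the structure of the argument is identical, but every step involving "integrate over $M$" must be justified in the presence of the singular locus $\Sigma$: one needs $\omega$ (and its relevant derivatives) to decay sufficiently at $\Sigma$ so that the integration by parts producing the Weitzenb\"ock identity and the pairing above carries no boundary contribution from small tubes around $\Sigma$. This is precisely the hypothesis stated in the proposition ("provided the integrations by parts \dots\ can be justified"), and it is the analytic heart of the paper: it requires knowing that $\omega$ is polyhomogeneous at $\Sigma$ with leading exponents strong enough that the boundary terms over $\{r = \epsilon\}$ and over the codimension-$3$ strata vanish as $\epsilon \to 0$. Thus the main obstacle is not the formal computation — which is routine Weitzenb\"ock bookkeeping — but rather the deferred regularity and asymptotics analysis, carried out later in the paper via the theory of conic and iterated-edge elliptic operators, that licenses these integrations by parts for the gauged deformation $h$.
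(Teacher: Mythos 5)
Your argument is correct and is precisely the paper's proof: applying $B^{g+k}$ to $N^g(k)=0$, using the contracted second Bianchi identity together with the Weitzenb\"ock formula $2B^{g'}(\delta^{g'})^*=(\nabla^{g'})^*\nabla^{g'}-\Ric(g')$ on $1$-forms, and then integrating by parts against $\omega=B^g(k)$ is exactly the route taken in the text, including the caveat about justifying the integration by parts on cone-manifolds. No gaps.
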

In the smooth case, the proof follows immediately from the Weitzenb\"ock formula
\[
2B^{g+k}N^g(k) = \left((\nabla^{g+k})^*\nabla^{g+k} - \Ric(g+k)\right)(B^g(k))
\]
and the integration by parts
\[
\begin{split}
\int_M \langle \left((\nabla^{g+k})^*\nabla^{g+k} - \Ric(g+k)\right)(B^g(k)), B^g(k) \rangle  \\ 
= \int_M |\nabla^{g+k}B^g(k)|^2 - \langle \Ric(g+k)B^g(k),B^g(k)\rangle.
\end{split}
\]
For cone-manifolds, one must verify that this last step introduces no extra boundary terms. 

\subsection*{Gauging an infinitesimal deformation}\label{sec:gauge}

We now briefly sketch how Theorem \ref{thm:mainthm} is proved below. Let $(M,g)$ be a closed cone-manifold and 
$\dot{g}$ an angle-preserving infinitesimal deformation of this cone-manifold structure which is in standard form. 
In particular, according to Proposition \ref{prop:stdl2} $\dot{g}$ is polyhomogeneous along the singular locus, with
$\dot{g}$, $\n \dot{g} \in L^2$, and in addition $DE_g(\dot{g}) = 0$. Our goal is to prove that in the hyperbolic 
case, $\dot{g}$ is a trivial Einstein deformation, i.e.\ that $\dot{g} = 2\delta^* \omega$ for some $1$-form $\omega$ 
with reasonable regularity properties at $\Sigma$ (in the Euclidean case we prove that $\dot{g}$ is equivalent to 
a harmonic $T^*M$-valued $1$-form). The main step is to put $\dot{g}$ in Bianchi gauge. Thus we 
seek a $1$-form $\eta$ such that
\[
B^g (\dot{g} - 2 \delta^* \eta) = 0,
\]
or equivalently, using the classical $1$-form Weitzenb\"ock identity $\delta d + d \delta = \na \n +\Ric$, we must find $\eta$ as a solution to the equation
\begin{equation}
P^g \eta = B^g\dot{g},
\label{eq:tmeq}
\end{equation}
where $P^g$ is the operator 
\begin{equation*}
P^g = 2 B^g \delta^* = (\nabla^g)^* \nabla^g - \Ric(g)
\end{equation*}
acting on $1$-forms. 

The main result which will allow us to prove the infinitesimal rigidity theorem is the following:
\begin{Thm}\label{thm:gauge}
Let $M$ be a compact connected three-dimensional cone-manifold which is either hyperbolic or Euclidean, and suppose that all 
of its cone angles are smaller than $2\pi$. If it is Euclidean, then suppose too that it has at least one singular point of codimension $3$.

If $f \in L^2(M;T^*M)$ and is polyhomogeneous, then there exists a unique polyhomogeneous solution $\eta$ to the equation $P^g\eta = f$ 
such that $\eta$, $\n \eta$, $\n d \eta$, and $d \delta \eta$ all lie in $L^2$. 
\end{Thm}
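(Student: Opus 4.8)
The plan is to solve the equation $P^g\eta = f$ by the standard method for elliptic operators on spaces with iterated conic singularities: first establish the relevant Fredholm theory for $P^g$ acting between weighted function spaces adapted to $\Sigma$, then identify the correct weight in which $P^g$ is invertible, and finally bootstrap to obtain polyhomogeneity and the stated $L^2$ regularity of $\eta$ together with its derivatives. I would work with the resolution of $M$ to a manifold with corners carrying an iterated fibration structure (as in \cite{ALMP}), on which $P^g$ becomes an elliptic iterated edge (conic/simple-edge, with one depth-$2$ corner at the vertices $\Sigma_0$) operator in the appropriate calculus. The first step is to compute the indicial roots of $P^g = (\nabla^g)^*\nabla^g - \Ric(g)$ at each stratum: along the edges $\Sigma_1$ one separates variables in the model metric \eqref{eq:metric1conemfd}, writing $1$-forms in terms of $d\rho$, $d\theta$, $dy$ components and expanding in Fourier modes in $\theta\in\Sph^1_\alpha$; the indicial roots are then explicit functions of the cone angle $\alpha < 2\pi$ and the Fourier mode. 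At the vertices $\Sigma_0$ the model is the cone \eqref{eq:metric2conemfd} over a spherical cone-surface $N_p\in\calM_\ell(\Sph^2)$, and the indicial roots are governed by the eigenvalues of the associated Laplace-type operator on $1$-forms on $N_p$; here one uses the spectral theory of such operators on spherical cone-surfaces with angles less than $2\pi$ (cf.\ \cite{MW}) to locate the low-lying eigenvalues and hence the indicial roots near the critical weight.

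The second step is the functional-analytic core: show that for a weight $\delta$ chosen just below the value forced by $L^2$ (i.e.\ $\eta \in \rho^{0^-}H^\ast$ relative to the conic volume form, but with the precise $\delta$ dictated by the indicial root computation) the operator $P^g$ is Fredholm between the corresponding weighted edge-Sobolev spaces, and that it is in fact invertible. Invertibility will follow from two facts: (a) the formal self-adjointness and semiboundedness of $P^g$ — indeed $P^g = 2B^g\delta^\ast$ is nonnegative in the hyperbolic case since $-\Ric(g) = 2g$, and in the Euclidean case $P^g = (\nabla^g)^\ast\nabla^g$ is manifestly nonnegative — so that there are no $L^2$ kernel elements other than parallel $1$-forms; and (b) a Poincaré/unique-continuation argument ruling out the nontrivial parallel $1$-forms, which is exactly where the hypotheses enter. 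On a closed hyperbolic manifold there are no parallel $1$-forms at all; in the Euclidean case a parallel $1$-form on $M\setminus\Sigma$ would have to be compatible with the holonomy around the singular edges and, crucially, around a codimension-$3$ singular vertex — whose link is a spherical cone-surface with a genuine conical singularity — and the assumed existence of at least one such vertex forces any such form to vanish. This dichotomy is precisely why the Euclidean statement requires a codimension-$3$ singular point.

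The third step, once invertibility at the chosen weight is in hand, is the regularity and polyhomogeneity of the solution: since $f$ is polyhomogeneous and $P^g$ is an elliptic element of the iterated edge calculus, the edge parametrix construction (or equivalently the standard Mellin-transform/indicial-root bootstrap, carried out first at the edges $\Sigma_1$ and then at the depth-$2$ vertices $\Sigma_0$) shows that $\eta$ admits a polyhomogeneous expansion at $\Sigma$, with exponents determined by the indicial roots of $P^g$ and the exponents appearing in $f$. From the leading exponents of this expansion one reads off that $\eta$, $\nabla\eta$, $\nabla d\eta$ and $d\delta\eta$ all lie in $L^2$: the point is that the critical weight was chosen so that $\eta$ decays (or is bounded with an integrable rate) fast enough that one derivative remains square-integrable, and then the second-derivative combinations $\nabla d\eta$ and $d\delta\eta$ are controlled because $P^g\eta = f \in L^2$ together with the Weitzenböck structure bounds exactly these combinations — this is the $1$-form analogue of the elliptic estimate $\|\nabla^2 u\|_{L^2} \lesssim \|P u\|_{L^2} + \|u\|_{L^2}$ once the boundary terms at $\Sigma$ have been shown to vanish via the expansion. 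Uniqueness of the polyhomogeneous solution with these properties follows from the triviality of the $L^2$-kernel established in step two.

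I expect the main obstacle to be the depth-$2$ structure at the vertices $\Sigma_0$: while isolated conic and simple-edge operators are by now classical, here the two strata $\Sigma_0$ and $\Sigma_1$ meet — a neighbourhood of a vertex is a cone over a spherical cone-surface which itself has conic points, giving the local model \eqref{eq:metric3conemfd} — so one must carefully check that the edge parametrix at $\Sigma_1$ is uniform up to $\Sigma_0$ and can be composed with the conic parametrix at $\Sigma_0$ without losing the weight or the polyhomogeneity. Concretely, the hard part is to show that the indicial operator at a vertex $p$, which acts on sections over the link $N_p$, is itself a well-behaved (Fredholm, with discrete spectrum) conic operator on the cone-surface $N_p$, and to control how its spectrum interacts with the edge indicial roots so that no resonance sits exactly on the critical weight. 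This is the "some new analytic work" the introduction alludes to, and it is where the restriction to cone angles less than $2\pi$ is used analytically: it guarantees a spectral gap on the spherical links (from the Gauss–Bonnet/Troyanov–Luo–Tian picture for $\calM_\ell(\Sph^2)$) that keeps the relevant indicial roots away from the weight at which we invert $P^g$.
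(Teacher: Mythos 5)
Your overall architecture --- indicial roots at both strata, normal operator analysis, an iterated-edge parametrix, a polyhomogeneity bootstrap, and a positivity/parallel-form argument to kill the kernel (with the codimension-$3$ vertex ruling out parallel $1$-forms in the Euclidean case) --- matches the paper. But your second step contains a genuine gap: you assert that one can choose a weight $\delta$ at which $P$ is Fredholm and in fact invertible between weighted edge-Sobolev spaces, the only danger being a ``resonance sitting exactly on the critical weight.'' For this operator no such weight exists. The indicial root $0$ along each edge is a double root (solutions $dy$ and $\log\rho\,dy$, both in $L^2$), and the normal operator $N(\na \n)$ on the model edge $C(\Sph^1_\alpha)\times\RR$ is injective on $\rho^\delta L^2$ only for $\delta>1$ and surjective only for $\delta<1$; at the $L^2$ weight, and at any weight ``just below'' it, it has an infinite-dimensional kernel, spanned after Fourier transform in $y$ by $K_0(\rho|\xi|)\,dy$. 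So $P$ is never an isomorphism, nor even Fredholm, between a single pair of weighted spaces, and Proposition~\ref{pr:frededge} does not apply. The paper's resolution is to replace the weight condition by a boundary condition --- the Friedrichs extension, which admits the $a(y)\,dy$ term but forbids the $\log\rho\,dy$ term --- and to construct the parametrix by hand on the offending zero Fourier mode using the explicit Bessel Green kernel \eqref{Green}, which selects the log-free solution. Your proposal needs this step, or an equivalent device, to obtain a Fredholm problem at all.

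A second, smaller gap: membership of $\n d\eta$ and $d\delta\eta$ in $L^2$ does not follow from a generic Weitzenb\"ock or elliptic estimate, because the Friedrichs domain does not control the full Hessian near $\Sigma$ and $\eta$ does not decay --- its leading terms are $a(y)\,dy$ at the edges and $\eta_j^+ \sim r^{-3/2+\frac12\sqrt{1+4\lambda_j}}(\cdots)$ at the vertices, the latter possibly unbounded. What makes the conclusion true in the paper is the algebraic fact that every indicial solution admitted by the Friedrichs extension is closed: $d\eta^{-+}_{+1}=d\eta^{+-}_{-1}=0$ at the edges, $d\eta_j^+=0$ at the vertices, and $d(a(y)\,dy)=0$. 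Hence $d\eta$ equals $d$ applied to the remainder and gains a full order in the expansion, after which $\n d\eta\in L^2$ and $d\delta\eta=\Delta\eta-\delta d\eta\in L^2$. Without these computations the claim that the relevant boundary terms vanish is unsupported.
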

There is a version of this theorem which does not assume polyhomogeneity, but the version here is simpler to work with
and is all we need anyway.  By construction, $h = \dot{g} - 2 \delta^* \eta$ lies in the kernel of the operator $L^g$. 
This gauged deformation is polyhomogeneous, and by examining the specific structure of the terms in its expansion, we can
justify the integration by parts which shows that $h$ either vanishes (hyperbolic case) or is a harmonic $T^*M$-valued 
$1$-form (Euclidean case).

\section{Some elliptic theory on stratified spaces}\label{ellcm}
In the preceding sections we introduced the formalism through which the infinitesimal rigidity problem can be transformed 
into a set of purely analytic questions concerning existence and behaviour of solutions of certain elliptic operators 
on conifolds. We now discuss the analytic techniques needed to study these questions. Although much of this can
be carried out for general elliptic `iterated edge operators', for brevity we only describe the main results for 
generalized Laplacians on tensor bundles, i.e.\ operators of the form $\nabla^* \nabla + \calR$ where $\calR$ 
is an endomorphism built out of the curvature.  This includes the specific operators $L^g$ and $P^g$ on cone-surfaces and cone-manifolds. 

We begin by discussing general iterated edge operators and reviewing the notions of conormal and polyhomogeneous 
regularity. This leads to a more detailed description of the three `low depth' cases of interest here: generalized Laplacians
for metrics which are conic, have incomplete edges, or which are cones with links equal to spaces with isolated conic singularities.
For simplicity, we refer to singularities of this last type as restricted depth $2$ (since general depth $2$ singularities
are cone bundles over spaces with isolated singularities). In each case we state the basic mapping properties and the 
regularity of solutions. This is required to understand the self-adjoint extensions of these operators, which along
with the regularity theory is an important tool later in this paper. 

The discussion in the remainder of this section is phrased for operators acting on functions rather than sections of
bundles. There is no loss of generality in doing this, and all these results generalize (usually trivially) to systems. 

\subsection*{Incomplete iterated edge operators}
Just as iterated edge spaces are defined inductively, so too are the natural classes of differential operators on these 
spaces. Let $M$ be an iterated edge space, and $S$ a stratum of depth $d$; thus, any point of 
$S$ has a neighbourhood diffeomorphic to the product $\calU \times C_{0,1}(N)$ where $\calU$ is an open ball 
in $\RR^m$, $m = \dim S$, and  the link $N$ is an iterated edge space of depth strictly less than $d$. Let $r$ 
be the radial variable in $C_{0,1}(N)$, $y$ a coordinate system in $\calU$, and $z$ a `generic' variable in $N$. 
If $\calL$ is a generalized Laplacian on $M$ associated to an incomplete iterated edge metric $g$, then there is an 
induced generalized Laplacian $\calL_N$ on $N$, so that in this neighbourhood, and with respect to appropriate
trivializations of the bundles between which these operators act, 
\begin{equation}
\calL = -\del_r^2 - \frac{n-m-1}{r}\del_r + \frac{1}{r^2}\calL_N + \Delta_S + \frac{1}{r}E;
\label{eq:mgL}
\end{equation}
here 
\[
E =  A_{1,0}(r,y)\, r\del_r + \sum_{|\alpha| \leq 2} A_{0,\alpha}(r,y)\, \del_y^\alpha
\]
where each $A_{0,\alpha}$ is an incomplete iterated edge operator (the precise definition of which is given just below) of 
order $2-|\alpha|$ on $N$ which is smooth down to $r=0$ and  $A_{1,0}$ is an operator of order $0$ (i.e.\ it acts by 
matrix multiplication). The important point is not the exact expression of $E$ but the fact that it contains only 
`lower order terms' in a sense that will be made clear soon. 
If $d>1$, then $\calL_N$  has a similar expression. For example, when $d=2$, then using `edge coordinates' $(s,w,\theta)$ 
on $N$, where $\theta$ is a variable on the smooth link $N'$ for the cone-bundle structure of $N$, we can write 
\begin{equation}
\calL = -\del_r^2 - \frac{n-m-1}{r}\del_r + \frac{1}{r^2}(-\del_s^2 - \frac{\ell}{s}\del_s + 
\frac{1}{s^2}\calL_{N'} + \Delta_w) + \Delta_S + \frac{1}{r}E_2,
\label{eq:mgL2}
\end{equation}
where once again $E_2$ contains the various lower order terms and $\ell=\dim N'$.

The key point is that when $d=1$, the operators $r^2\calL$ and $r\calL r$ are sums of smooth multiples of products 
of the vector fields $r\del_r, r\del_y, \del_z$, while if $d=2$, then $r^2 s^2 \calL$ is a similar combination of the vector fields 
$r s \del_r, s\del_s, r s \del_y, s \del_w, \del_\theta$. We shall denote the span over $\calC^\infty$ of these 
the \emph{edge} and \emph{iterated edge vector fields}, $\calV_\mathrm{e}$ and $\calV_{\ie}$, respectively; if $d=1$ and the singular 
stratum consists of isolated points, then $\calV_\mathrm{e}$ reduces to the space of $b$-vector fields $\calV_b$, see \cite{Melrose}.  
In general, an \emph{iterated edge operator} in one of these three cases is any differential operator which is a locally finite sum 
of products of elements of $\calV_b$, $\calV_{\mathrm{e}}$ or $\calV_{\ie}$. As above, we shall actually be dealing with
operators of the form $r^{-m}A$ or $(r s)^{-m}A$ where $A$ is an iterated edge operator of order $m$,
and we call these conic, incomplete edge or incomplete iterated edge operators. 

Associated to these three spaces of vector fields and the associated operators are the $L^2$-based weighted Sobolev spaces:
\[
\begin{array}{rcl}
H^\ell_*(M) & = & \{u: V_1 \cdots V_j u \in L^2\ \forall\, j \leq \ell,\ V_i \in \calV_*\}, \\
r^\delta s^{\delta'}H^\ell_*(M) & = & \{u = r^\delta s^{\delta'} v: v \in H^\ell_*(M)\}, \qquad
* = b, \mathrm{e}\  \mbox{or}\ \ie.
\end{array}
\]
We also define $r^\delta s^{\delta'} H^\infty_*(M)$ to be the intersection of the above spaces for all integers $\ell$.
Of course, when $* \neq \ie$, the $s^{\delta'}$ factor is absent. To avoid redundancy, in the rest of this section we
follow a similar convention by writing various formul\ae\ with both $r$ and $s$ factors present, with the
understanding that $s$ should  be omitted unless we are discussing operators on depth $2$ spaces.

Directly from these definitions, if $\calL$ is a generalized Laplacian on an iterated edge space, then
\begin{equation}
\calL: r^{\delta} s^{\delta'}H^{\ell+2}_*(M) \longrightarrow r^{\delta-2}s^{\delta'-2}H^\ell_*(M)
\label{eq:mapedge}
\end{equation}
is bounded for any real numbers $\delta, \delta'$.  The more interesting problem is to determine whether these mappings 
are Fredholm, and whether solutions of $\calL u = 0$ have special regularity properties. It turns out that the answers to 
these questions are sensitive to the values of the weight parameters. 

\subsection*{Domains of closed extensions of conic operators}
The Fredholm properties for the incomplete iterated edge operator $\calL$ acting between weighted Sobolev spaces with 
{\it different} weights is actually not what is needed below; we are more interested in the unbounded operator
\begin{equation*}
\calL: L^2(M) \longrightarrow L^2(M).
\end{equation*}
More specifically, we consider closed extensions of the operator $\calL$, initially acting on the core domain 
$\calC^\infty_0(M \setminus \Sigma)$. In other words, we wish to choose a domain $\calD$ with 
$\calC^\infty_0(M \setminus \Sigma) 
\subset \calD \subset L^2(M)$ such that the graph of $\calL$ over $\calD$, $\{(u,\calL u): u \in \calD\}$, is a closed 
subspace; we are most interested in domains $\calD$ such that the corresponding unbounded Hilbert space operator 
$(\calL, \calD)$ is self-adjoint. We refer to \cite{Rudin} for a discussion of this general theory, and to \cite{Gil-Mend} or 
\cite{Lesch} for its application in the conic setting. 

There always exist two canonical closed extensions of the differential operator $\calL$:
\begin{itemize}
\item[a)] The maximal domain of $\calL$ is the subspace
\[
\calD_{\max}(\calL) = \{u \in L^2(M): \calL u \in L^2(M) \};
\]
where $\calL u$ is defined in the sense of distributions. This is the largest possible subset of $L^2$ on which $\calL$ can be defined (while preserving the fact that its adjoint domain contains $\calC^\infty_0$, see below). 
\item[b)] The minimal domain of $\calL$ is the subspace 
\begin{multline*}
\calD_{\min}(\calL) = \{u \in L^2(M): \exists\, u_j \in \calC^\infty_0(M\setminus \Sigma)\ \mbox{such that}\ \\
||u_j - u||_{L^2} \to 0,\ \{\calL u_j\}\ \mbox{Cauchy in}\ L^2 \}.
\end{multline*}
In other words, $u \in \calD_{\min}$ if $\calL u = f$ distributionally and $(u,f)$ lies in the closure in $L^2 \times L^2$ 
of the graph of $\calL$ over $\calC^\infty_0$. 
\end{itemize}
A straightforward argument using cutoff functions shows that $r^2 s^2 H^2_{\ie}(M) \subseteq \calD_{\min}$, and
it is always the case that $\calD_{\min} \subseteq \calD_{\max}$. 

If $(\calL, \calD)$ is any closed extension of $\calL$, then the Hilbert space adjoint $(\calL, \calD^*)$ is defined as follows:
the adjoint domain is given by
\[
\calD^* = \{v \in L^2: |\langle \calL u,v\rangle| \leq C ||u||_{L^2}\ \mbox{for all}\ u \in \calD\}.
\]
(We are using the inner product $\langle u, v \rangle = \int_M g(u,v)\, dV_g$, so that $\calL$ is symmetric, i.e.\ 
$\langle \calL u, v \rangle = \langle u, \calL v \rangle$ for all $u,v \in \calC^\infty_0$.)   Then, for $v \in \calD^*$,
define $\calL v$ by the Riesz representation theorem: $\calL v = f$ is the unique element in $L^2$ such that 
$\langle \calL u, v \rangle = \langle u, f \rangle$ for all $u \in \calD$. 

It is straightforward to check that $\calD_{\max} = \calD_{\min}^*$ and $\calD_{\min} = \calD_{\max}^*$, and more generally,
that if $\calD_1 \subseteq \calD_2$, then $\calD_2^* \subseteq \calD_1^*$. 
An extension $(\calL,\calD)$ is called self-adjoint if $\calD^* = \calD$; in particular, if $\calD_{\min} = \calD_{\max} := 
\calD$, then $\calL$ is called \emph{essentially} self-adjoint and $(\calL, \calD)^* = (\calL, \calD)$ is the unique self-adjoint 
extension of $\calL$. For convenience, we often write $\calL_{\calD}$ for $(\calL,\calD)$, and in particular $\calL_{\max}$ 
or $\calL_{\min}$ when $\calD = \calD_{\max}$ or $\calD_{\min}$.  

If $\calL$ has the special form $\calL = A^* A + \calR$ where $\calR$ is bounded above and below, then there is a natural 
domain on which $\calL$ is self-adjoint, called the Friedrichs extension. It is defined by
\[
\calD_{\mathrm{Fr}}= \{u \in \calD_{\max}(\calL): \exists \, u_j \in \calC^\infty_0\ \mbox{with}\ ||u_j - u||_{L^2} \to 0,
\ Au_j\ \mbox{Cauchy}\, \}.
\]
By definition, $\calL_{\Fr} = (A_{\min})^* A_{\min} + \calR= (A^*)_{\max}A_{\min} + \calR$.

To illustrate how this arises, consider the case where $A = \nabla$, acting between certain tensor bundles.
We begin with the $L^2$ Stokes' theorem on conifolds, which is a special case of a result in Cheeger's fundamental 
paper \cite{Cheeger}; see also \cite[Appendix]{HK}, \cite{HM}, and \cite[\S 1.4]{MontcouqThese} for  more details.
\begin{Prop} \label{thm:stokes} 
Let $M$ be a cone-surface or three-dimensional conifold with singular locus $\Sigma$. Suppose that $u \in L^2(M,T^{(r,s)}M)$ 
and $v \in L^2(M,T^{(r+1,s)}M)$ satisfy $u, \n u, v, \na v \in L^2(M)$. Then 
\[
\langle \n u,v \rangle = \langle u,\na v\rangle. 
\]
On the other hand, if $u$ and $v$ are differential forms such that $u$, $d u$, $v$,  $\delta v \in L^2$, then
\[
\langle du , v \rangle = \langle  u, \delta v \rangle. 
\]
More succinctly, $\n_{\min} = \n_{\max}$, $d_{\min} = d_{\max}$, and $\delta_{\min} = \delta_{\max}$. 
\end{Prop}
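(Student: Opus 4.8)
The plan is to reduce the statement about differential forms and the operators $d,\delta$ to the single tensorial statement $\n_{\min}=\n_{\max}$, and to prove the latter by a cutoff argument adapted to the iterated edge structure of $M$. Concretely, the assertion $\n_{\min}=\n_{\max}$ means: if $u\in L^2(M,T^{(r,s)}M)$ with $\n u\in L^2$ (in the distributional sense), then there exist $u_j\in\calC^\infty_0(M\setminus\Sigma)$ with $u_j\to u$ and $\n u_j\to\n u$ in $L^2$. Once this is known, the integration-by-parts identity $\langle\n u,v\rangle=\langle u,\na v\rangle$ follows immediately: approximate $u$ by $u_j\in\calC^\infty_0$, apply the smooth divergence theorem (no boundary terms since $u_j$ has compact support in $M\setminus\Sigma$) to get $\langle\n u_j,v\rangle=\langle u_j,\na v\rangle$, and pass to the limit using $v,\na v\in L^2$. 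The form statements $d_{\min}=d_{\max}$, $\delta_{\min}=\delta_{\max}$ and $\langle du,v\rangle=\langle u,\delta v\rangle$ then follow because $d$ and $\delta$ are obtained from $\n$ by composing with bounded bundle maps (antisymmetrization, metric contraction), so a sequence approximating $u$ in the $\n$-graph norm also approximates it in the $d$- or $\delta$-graph norm.

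The core is therefore the density statement. First I would reduce to a neighbourhood of $\Sigma$: away from $\Sigma$ the metric is smooth and complete-looking on relatively compact pieces, so a standard interior cutoff handles that region, and the whole issue is localized near the strata. Near a stratum one uses logarithmic cutoff functions in the appropriate radial variables. For an isolated conic point or a simple edge (depth $1$), with radial variable $r$ and $\dim M=n\in\{2,3\}$, one takes $\chi_\e(r)=\chi(\log r/\log\e)$ (suitably normalized so $\chi_\e\equiv1$ for $r\ge\sqrt\e$ and $\chi_\e\equiv0$ for $r\le\e$); then $|\n\chi_\e|\le C/(r|\log\e|)$, and the key estimate is
\[
\int |\,(\n\chi_\e)\otimes u\,|^2\,dV_g \;\le\; \frac{C}{|\log\e|^2}\int_{\e\le r\le\sqrt\e}\frac{|u|^2}{r^2}\,r^{n-1}\,dr\,d(\mathrm{link})\cdot r^{\,?}\,,
\]
which in dimension $n=2$ or $3$ tends to $0$ as $\e\to0$ precisely because $|\log\e|^{-2}$ beats the at-worst logarithmically divergent integral $\int r^{-2}\,r^{n-1}\,dr$ near $r=0$ when $u\in L^2$ (for $n=3$ the radial integral is even convergent; for $n=2$ one gets the borderline $\int dr/r$, killed by the $|\log\e|^{-2}$ prefactor). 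Hence $\n(\chi_\e u)=\chi_\e\n u+(\n\chi_\e)\otimes u\to\n u$ in $L^2$, and $\chi_\e u$ is supported away from $\Sigma$; a further mollification makes it smooth and compactly supported.

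The genuinely new point, and the step I expect to be the main obstacle, is the depth-$2$ stratum, where near a codimension-$3$ vertex the metric has the iterated form $dr^2+\sn_\K^2 r\,(ds^2+\sin^2 s\,d\theta^2)$ as in \eqref{eq:metric3conemfd}, so one must cut off simultaneously in $r$ (distance to the vertex) and in $s$ (distance to the edge emanating from it). The plan is to use a product cutoff $\chi_\e(r)\,\psi_\e(s)$ built from two logarithmic cutoffs, and to check that both cross terms $(\n\chi_\e)\otimes u$ and $\chi_\e(\n\psi_\e)\otimes u$ are $o(1)$ in $L^2$. The subtlety is that the gradient of the $s$-cutoff is measured in the metric $\sn_\K^2 r\,(\cdots)$, so $|\n\psi_\e|_g\sim (\sn_\K r)^{-1}|\del_s\psi_\e|$, and the volume element carries a compensating factor $\sn_\K^2 r\sin s\,\sn_\K^2 r$; one must verify that the $r$ and $s$ weights combine so that the relevant integral is again at worst logarithmically divergent and thus annihilated by the $|\log\e|^{-2}$ factor. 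This is exactly the "depth $2$" analytic point flagged in the introduction; organizing the iterated integration so that one may apply the depth-$1$ estimate on the link cone-surface fibrewise (uniformly in $r$), and then the depth-$1$ estimate in $r$, is the cleanest route. I would also remark that one never needs the full strength of a Hardy inequality here — only that $u\in L^2$ forces the truncated weighted integrals to be $o(|\log\e|^2)$ — which is why the naive logarithmic cutoff suffices in these low dimensions even though it would fail in higher-dimensional conic settings.
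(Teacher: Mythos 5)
Your overall strategy --- prove the density statement $\n_{\min}=\n_{\max}$ by logarithmic cutoffs and then integrate by parts against the approximating sequence --- is a legitimate alternative to the paper's route, which instead estimates the boundary term $\int_{\d M_\e}g(u,i_nv)$ directly via Cauchy--Schwarz, a trace estimate $\int_{\d M_\e}|u|^2=\calO(\e\log\e)$ extracted from $\n u\in L^2$, and a pigeonhole choice of radii $\e_j$ for the factor involving $v$ (which is only in $L^2$). However, your version has a genuine gap at its central estimate. You assert that the cross term $(\n\chi_\e)\otimes u$ is $o(1)$ in $L^2$ using \emph{only} $u\in L^2$, explicitly disclaiming any Hardy-type input. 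That is false: $u\in L^2$ alone does not force $\int_{\e\le r\le\sqrt{\e}}|u|^2r^{-2}\,dV$ to be $o(|\log\e|^2)$. For instance, near a cone point of a surface take $|u|^2=r^{-2}|\log r|^{-3}$, which lies in $L^2(r\,dr\,d\theta)$, yet $\int_\e^{\sqrt{\e}}|u|^2r^{-2}\,r\,dr\sim\e^{-2}|\log\e|^{-3}$; near an edge of a $3$-conifold, $|u|^2=\rho^{-1}$ gives $\int_\e^{\sqrt{\e}}\rho^{-2}\,d\rho\sim\e^{-1}$. Your computation implicitly treats $|u|$ as bounded. The estimate is rescued exactly by the hypothesis you set aside: from $u,\n u\in L^2$ one deduces the borderline Hardy bound $u/(r|\log r|)\in L^2$ (the transverse dimension is $2$ both at cone points of surfaces and along edges of $3$-conifolds, so one is in the critical case), and since $|\log r|\le|\log\e|$ on the transition annulus, $|\log\e|^{-2}\int_{A_\e}|u|^2r^{-2}\,dV\le\int_{A_\e}|u|^2(r|\log r|)^{-2}\,dV\to 0$ by dominated convergence. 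So $\n u\in L^2$ must actually be \emph{used} in the cutoff estimate; it is the same information the paper encodes as the trace estimate on $\d M_\e$.

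A second, logical, gap is the reduction of the form statements to the tensor statement. Composing $\n$ with bounded bundle maps shows only that $\calD(\n_{\max})\subset\calD(d_{\min})\cap\calD(\delta_{\min})$; it does not show $d_{\min}=d_{\max}$, because $\calD(d_{\max})$ is defined by the strictly weaker condition $u,du\in L^2$ and is a priori larger than $\calD(\n_{\max})$ --- and the second assertion of the Proposition concerns precisely such $u$. (A Gaffney-type inclusion $\calD(d_{\max})\cap\calD(\delta_{\max})\subset\calD(\n_{\max})$ is itself a nontrivial claim on singular spaces, and in any case the hypothesis here involves $d$ on $u$ and $\delta$ on $v$ separately.) The form case therefore needs its own argument in which the cross term $d\chi_\e\wedge u$ is handled with only $u,du,v,\delta v\in L^2$ available; there one cannot invoke a Hardy inequality for $u$, and the natural fix is the paper's boundary-term formulation with Cauchy--Schwarz and the pigeonhole in $\e$ (the paper delegates the details to Cheeger and the other cited references). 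Your outline for the depth-$2$ points via a product of logarithmic cutoffs is reasonable, but it inherits both of the issues above.
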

\begin{proof} (Sketch) Let $\calU_\e$ be the $\e$ neighbourhood of $\Sigma$, and set $M_\e = M\setminus \calU_\e$. 
Then for $u, v \in \calC^\infty(M\setminus\Sigma)\cap L^2(M \setminus \Sigma)$, 
\[
\int_{M_\e} \left(g(u,\na v) - g(\n u,v) \right) = \int_{\d M_\e} g(u,i_{n} v),
\]
where $n$ is the unit normal to $\d M_\e$ and $i_n(v)(.) = v(n,\cdot )$. The left side converges to $\<u,\na v\> - \<\n u,v\>$ 
as $\e \searrow 0$, so the main point is to show that the limit of the right side vanishes. The Cauchy-Schwarz inequality gives
\[
\left|\int_{\d M_\e} g(u,i_{n}v) \right| \leq \left(\int_{\d M_\e} |u|^2\right)^{1/2} \left(\int_{\d M_\e} 
|i_{n} v|^2\right)^{1/2}.
\]
The fact that $\n u$ is in $L^2$ implies that $u$ decays near the singular locus. More precisely, one can show that $\int_{\d M_\e} |u|^2 = \calO(\e \log \e)$. In the same spirit, since $v$ and hence $i_n(v)$ lie in $L^2$, they cannot grow to fast near $\Sigma$: there exists a sequence $\e_j \to 0$ such that $\int_{\d M_{\e_j}} |i_{n} v|^2 = 
o((\e_j \log \e_j )^{-1})$. When $u$ and $v$ are only $L^2$, then one must use smooth approximations of $u$ and $v$. In any 
case, it follows that the limit of the right side vanishes as $\e \to 0$. The proof for $d$ and $\delta$ is similar.
\end{proof}

As an immediate consequence, we have the
\begin{Cor}
If $\calL = \nabla^* \nabla$, acting on any tensor bundle, then its Friedrichs extension satisfies
\[
(\na \circ \n)_{\Fr} = \na_{\max}\circ \n_{\min} = \na_{\min} \circ \n_{\max} = \na_{\max} \circ \n_{\max} = 
\na_{\min} \circ \n_{\min}; 
\]
in particular,
\[
\calD_{\Fr}(\na \circ \n +\calR) = \calD_{\max}(\nabla) \cap \calD_{\max}(\na \circ \n) = 
\{u \in L^2 \ |\ \n u \in L^2, \na \n u \in L^2\}.
\]
\end{Cor}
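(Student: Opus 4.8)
The plan is to reduce everything to the $L^2$ Stokes theorem (Proposition~\ref{thm:stokes}), which supplies the only genuine analytic content; the remainder is abstract operator theory. Throughout, write $A = \n$ for covariant differentiation on the tensor bundle in question, so that $A$ has formal adjoint $\na$, and recall the standard identities $(A_{\min})^* = (A^t)_{\max}$ and $(A_{\max})^* = (A^t)_{\min}$ relating the minimal and maximal closed extensions of an operator and of its formal transpose.

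First I would invoke Proposition~\ref{thm:stokes} to obtain $\n_{\min} = \n_{\max}$, and denote this single closed operator by $\n$; its domain is $\calD(\n) = \{u \in L^2 : \n u \in L^2\} = \calD_{\max}(\nabla)$. Taking Hilbert-space adjoints of $\n_{\min} = \n_{\max}$ and using the identities above gives $\na_{\max} = (\n_{\min})^* = (\n_{\max})^* = \na_{\min}$, so $\na$ is likewise an unambiguous single closed operator. At this stage all four compositions $\na_{\max}\circ\n_{\min}$, $\na_{\min}\circ\n_{\max}$, $\na_{\max}\circ\n_{\max}$, $\na_{\min}\circ\n_{\min}$ are literally the same operator, namely $\na\circ\n$ with its natural domain $\{u \in \calD(\n) : \n u \in \calD(\na)\}$.

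Next I would identify this domain concretely. Since covariant differentiation commutes with the distributional pairing, the condition $\n u \in \calD(\na) = \calD_{\max}(\na)$ is equivalent to $\na\n u \in L^2$ (with $\na\n u$ understood distributionally), so the common domain of the four compositions is exactly $\{u \in L^2 : \n u \in L^2,\ \na\n u \in L^2\}$, which is $\calD_{\max}(\nabla) \cap \calD_{\max}(\na\circ\n)$. Finally, for $\calL = \na\circ\n + \calR$ with $\calR$ a bounded (hence semibounded) endomorphism, the definition of the Friedrichs extension recalled above gives $\calL_{\Fr} = (A_{\min})^* A_{\min} + \calR = \na_{\max}\circ\n_{\min} + \calR$; by the previous step this coincides with $\na\circ\n + \calR$ on the stated domain, and since adding the bounded operator $\calR$ alters none of these domains, all the asserted equalities follow.

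I do not expect a serious obstacle: the substance is entirely in Proposition~\ref{thm:stokes}. The only point meriting a line of justification is the equivalence of $\n u \in \calD_{\max}(\na)$ with $\na\n u \in L^2$ — that a distributionally $L^2$ rough Laplacian forces $\n u$ into the maximal domain of $\na$ — which is immediate from the definitions of distributional derivatives.
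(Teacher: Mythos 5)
Your proposal is correct and follows exactly the route the paper intends: the paper offers no separate proof, stating only that the Corollary is an "immediate consequence" of Proposition~\ref{thm:stokes}, i.e.\ precisely the deduction you give, that $\n_{\min}=\n_{\max}$ (and dually $\na_{\min}=\na_{\max}$) collapses all four compositions into one operator with the stated domain, which then coincides with $(\n_{\min})^{*}\n_{\min}+\calR=\calL_{\Fr}$.
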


\subsection*{Conormality and polyhomogeneity}\label{sec:phg}
It is very convenient to be able to work with functions which are not just in some weighted iterated edge Sobolev space, 
but are more regular. Experience dictates that the right class of `smooth functions' on a conifold consists
of functions which are polyhomogeneous; this regularity condition arises naturally, for example, for solutions
of elliptic iterated edge equations. Roughly speaking, polyhomogeneous functions are those which
admit expansions near each singular locus with tangentially smooth coefficients, and which have `product-type'
behaviour near the higher depth singularities. 

Suppose first that $M$ has only isolated conic singularities, so that $\calV_b$ is the space of structure
vector fields. We say that $u$ is conormal of ($L^\infty$) weight $\delta$, $u \in \calA^\delta_{L^\infty}(M)$, 
if $|(r\del_r)^j \del_z^\alpha u | \leq C r^\delta$ for all $j, \alpha$ and for some fixed $\delta \in \RR$ which 
is independent of $j$ and $\alpha$. We could equally well have altered this definition to require that $u$ and 
all of its $b$-derivatives lie in some fixed weighted $L^2$ space, $r^\delta L^2(M, r^{-1} dr dz)$; if this condition
is satisfied, we write $u \in \calA^{\delta}_{L^2}(M)$.  These $L^2$-based conormal spaces are slightly different from 
the $L^\infty$-based ones defined first, but it is not hard to check that for each $\delta \in \RR$ and any $\e > 0$, 
$\calA^{\delta+\epsilon}_{L^\infty}(M) \subset \calA^{\delta}_{L^2}(M) \subset \calA^{\delta - \epsilon}_{L^\infty}(M)$, hence the 
intersection of these spaces over all $\delta$ are the same, and similarly, their union over all $\delta$ are the same. 
In various proofs below we shall 
find it simpler to work with the $L^2$-based spaces instead, but the $L^\infty$-based ones are slightly simpler conceptually,
so we presented their definition first. In any case, we say that conormal functions have {\it stable regularity} with 
respect to $\calV_b(M)$.  

Now suppose that $M$ has a simple edge singularity. Then $u$ is conormal at the edge if it has stable 
regularity in exactly the same sense with respect to the $b$-vector fields on $M$; note that this space of
vector fields is generated by $r\del_r, \del_y, \del_z$, which is {\it not} the same as $\calV_\mathrm{e}$, for then
we would only be requiring that $u$ have stable regularity upon taking derivatives with respect to $r\del_r, r \del_y$ 
and $\del_z$, so we would obtain only much weaker information on regularity in the edge ($y$) direction.
Finally, if $M$ has an isolated singular point of depth $2$, then in terms of coordinates $(r,y,s,w,\theta)$ as above,
$u$ is conormal if it has stable regularity with respect to the space of vector fields generated by $r \del_r, s\del_s, 
\del_y, \del_w, \del_\theta$; again note that this is much stronger than requiring stable regularity with respect to $\calV_{\ie}$.

The difference between conormality and regularity with respect to $\calV_\mathrm{e}$ or $\calV_{\ie}$ is one of the central points
of the theory of iterated edge operators. On the one hand, there is a very general regularity result which holds for solutions of 
arbitrary elliptic iterated edge operators; for simplicity we state this only for generalized Laplacians on depth $2$ spaces,
so as not to have to define ellipticity in this general setting or describe notation for weight functions in spaces of general depth.

\begin{Prop} Let $\calL$ be a generalized Laplacian for an iterated edge metric of depth $2$ on a space $M$. Suppose that
$f\in r^{\delta-2} s^{\delta'-2} H^\ell_\ie(M)$ for some $\ell \geq 0$ and weights $\delta, \delta' \in \RR$. 
If $u \in r^\delta s^{\delta'}L^2(M)$ solves $\calL u = f$, then $u \in r^\delta s^{\delta'}H^{\ell+2}_\ie(M)$; 
in particular, if $f = 0$, then $u \in r^\delta s^{\delta'}H^\ell_{\ie}$ for all $\ell \geq 0$ -- in other words, 
$u$ has stable regularity with respect to $\calV_{\ie}$. 
\label{pr:nantes2}
\end{Prop}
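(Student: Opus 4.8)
The plan is to prove this regularity result — which says that $L^2$ solutions of $\calL u = f$ automatically have "stable regularity" with respect to the iterated edge vector fields $\calV_\ie$ — by an inductive bootstrap argument that exploits the structure equations \eqref{eq:mgL} and \eqref{eq:mgL2} together with standard interior elliptic estimates on the smooth part and scaling. The basic principle is that $\calL$, though degenerate, is \emph{uniformly elliptic after dilation}: near a stratum $S$ of depth $1$, the rescaled operators $r^2\calL$ and $r\calL r$ are genuine $b$-elliptic operators built from smooth combinations of $\calV_\ie$, and near a depth-$2$ stratum the doubly rescaled operator $r^2 s^2 \calL$ is similarly $\calV_\ie$-elliptic. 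Thus on a dyadic annulus $\{r \sim 2^{-k}\}$ (further subdivided in $s$ near a depth-$2$ point), after rescaling to unit size, $\calL$ becomes a uniformly elliptic operator on a fixed reference region with uniformly bounded coefficients, and standard elliptic regularity gives control of two extra derivatives in the rescaled ($\calV_\ie$) sense, with constants independent of $k$. Summing the dyadic pieces against the weights $r^\delta s^{\delta'}$ yields $u \in r^\delta s^{\delta'} H^{\ell+2}_\ie(M)$, and iterating the argument (using that $f \in H^\ell_\ie$ improves as $\ell$ grows) gives membership in all the $H^\ell_\ie$ spaces when $f=0$.

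First I would set up the dyadic decomposition and the rescaling: cover a neighbourhood of the depth-$1$ stratum $S$ by regions $\calU_k = \{2^{-k-1} \le r \le 2^{-k+1}\} \times \calU_y \times N$, introduce the dilation $r \mapsto 2^{-k} r'$, and check from \eqref{eq:mgL} that the rescaled operator is, uniformly in $k$, a second-order operator which is elliptic in the $\calV_b$-sense on $N$ (by induction, since $N$ has smaller depth one may assume the analogous regularity theory on $N$) and elliptic in the ordinary sense in $(r', y)$; the error term $\frac{1}{r}E$ becomes, after rescaling, a genuinely lower-order perturbation with coefficients bounded uniformly in $k$. Then I would invoke the inductive hypothesis — the regularity statement on the link $N$ — combined with elliptic regularity in the non-singular variables $(r',y)$ to conclude: $\|u\|_{r^\delta s^{\delta'} H^{\ell+2}_\ie(\calU_k)} \le C\bigl(\|f\|_{r^{\delta-2}s^{\delta'-2}H^\ell_\ie(\calU_k)} + \|u\|_{r^\delta s^{\delta'}L^2(\widetilde{\calU}_k)}\bigr)$ where $\widetilde\calU_k$ is a slightly larger region. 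Near a depth-$2$ point one does the analogous thing with \eqref{eq:mgL2}, decomposing dyadically in $r$ and then, within each $r$-annulus, dyadically in $s$; the product structure of the iterated edge metric is exactly what makes this two-parameter rescaling work and is the reason the conclusion is phrased in terms of $\calV_\ie$ rather than the stronger conormal vector fields. Finally, summing over $k$ (and over the $s$-annuli in the depth-$2$ case) with the bounded overlap of the $\widetilde\calU_k$ gives the global estimate and hence $u \in r^\delta s^{\delta'}H^{\ell+2}_\ie(M)$; a further induction on $\ell$ handles the case $f=0$.

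The main obstacle, and the place where care is genuinely needed, is the passage to depth $2$ and the bookkeeping of the two-parameter rescaling. One must verify that the rescaled operators near the depth-$2$ stratum really are uniformly $\calV_\ie$-elliptic on the fixed reference region — i.e., that no coefficient degenerates or blows up as either $r \to 0$ or $s \to 0$ after the appropriate rescaling — which is exactly the content of the observation (stated in the text) that $r^2 s^2 \calL$ is a smooth combination of $rs\del_r, s\del_s, rs\del_y, s\del_w, \del_\theta$. One must also check that the "lower order" error terms $E$, $E_2$ genuinely lose at most the expected powers of $r$ (and $s$), so that after rescaling they contribute perturbatively and can be absorbed. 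Once this uniform ellipticity after rescaling is established, the rest is a standard (if somewhat tedious) interpolation-and-summation argument, and the induction on the depth of the singular stratum closes the proof; in our application only depths $1$ and $2$ arise, so the induction terminates immediately.
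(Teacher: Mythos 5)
Your proposal is correct and follows exactly the route the paper itself indicates: the paper disposes of Proposition \ref{pr:nantes2} with the single remark that it ``reduces by scaling arguments to standard local elliptic estimates'' (offering the uniform pseudodifferential calculus of \cite[\S 4]{ALMP} as an alternative), and your dyadic decomposition, two-parameter rescaling near the depth-$2$ points, uniform ellipticity of $r^2s^2\calL$ with perturbative error terms, and weighted summation is precisely the standard way to carry that out. You have simply written out the details the authors chose to omit.
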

A close examination of this statement shows that it reduces by scaling arguments to standard local elliptic estimates. 
It can also be proved using the so-called uniform pseudodifferential calculus on the interior of $M$, see \cite[\S 4]{ALMP} for 
a more careful description of this. On the other hand, it is not true that an arbitrary solution to $\calL u = 0$ is conormal, 
and it is an important problem to give criteria to ensure that solutions do in fact enjoy conormal regularity.

Functions which satisfy these conormality conditions include any monomial of the form $r^\delta a(y,z)$ or
$r^\delta s^{\delta'} a(y,w,\theta)$ in the depth $1$ and $2$ cases, respectively, where the coefficients
are smooth in all other variables; we can also take infinite asymptotic sums of such monomials. However,
many other functions satisfy the conormal estimates, including arbitrary powers of $|\log r|$, etc.
A more tractable subclass consists of the conormal functions $u$ which have asymptotic expansions in
terms of the simple monomials above; for technical reasons it is necessary to only allow positive integer powers 
of the logs of the defining functions. Thus in the depth $1$ case, $u$ is polyhomogeneous, 
$u\in \calA_{\phg}(M)$, if
\[
u \sim \sum_{\mathrm{Re}\, \gamma_j \to \infty} \sum_{\ell = 0}^{N_j}
r^{\gamma_j} (\log r)^\ell a_{j,\ell}(y,z),
\]
whereas in the depth $2$ case, $u \in \calA_{\phg}(M)$ if it has an expansion of this form near each
of the boundaries $r \to 0$, $s \geq \e$ and $s \to 0$, $r \geq \e$, while near the corner
$r = s = 0$ it has a double expansion
\[
u \sim \sum_{\genfrac{}{}{0pt}{}{\mathrm{Re}\, \gamma_j \to \infty}{\mathrm{Re}\, \eta_i \to \infty}}
\sum_{\ell=0}^{N_j} \sum_{k=0}^{N'_i}  r^{\gamma_j} s^{\eta_i} (\log r)^\ell (\log s)^k a_{i,j,k,\ell}(y,w,\theta),
\]
where again all coefficients are smooth. 

\subsection*{Conic operators}
A space $(M^n,g)$ with the simplest iterated edge structure is one with isolated conic singularities. Each cone point $p\in M$ 
has a neighbourhood $\calV$ diffeomorphic  to a cone $C_{0,1}(N)$ where $N$ is a compact, smooth $(n-1)$-dimensional 
manifold. A generalized Laplacian $\calL$ on $M$ can be written in $\calV$ as in (\ref{eq:mgL}), but since the singular
stratum $S$ reduces to the point $p$, there are no $y$ variables, and $N$ is smooth. 

Now define 
\[
A = r \calL r = -(r\del_r)^2 - nr \del_r + \calL_N - (n-1) + r E',
\]
where $E'=r^{-1}Er$ is again a smooth combination of multiples of $r\del_r$ and $\del_z$. This is a $b$-operator 
in the sense of \cite{Melrose}, cf.\ also \cite{Ma-edge}; we have put the two factors of $r$ on both sides of $\calL$
to make it symmetric on $L^2$, as this will be useful below.

A number $\gamma \in \CC$ is said to be an indicial root of $A$ if there exists $\phi \in \calC^\infty(N)$ such 
that $A(r^\gamma \phi(z)) = \calO(r^{\gamma+1})$. Writing this out, we see that
\begin{equation}
A(r^\gamma \phi) = r^\gamma (\calL_N - \gamma^2 - n\gamma - n+1) \phi + \calO(r^{\gamma+1}),
\label{eq:nantes}
\end{equation}
hence $\gamma$ is an indicial root for $A$ if and only if $\gamma^2 + n\gamma + n-1$ is an eigenvalue of $\calL_N$ 
and $\phi$ is the corresponding eigenfunction. We define the indicial roots of $\calL$ similarly, so $\gamma$ 
is an indicial root for $\calL$ if and only if $\gamma - 1$ is an indicial root for $A$.

We have already noted that 
\begin{equation}
\calL: r^{\delta}H^{\ell+2}_b(M) \longrightarrow r^{\delta-2} H^\ell_b(M)
\label{eq:mapwss}
\end{equation}
is bounded for any $\delta, \ell$. The first basic result is that (\ref{eq:mapwss}) is Fredholm whenever $\delta$ is 
not an `indicial weight'. More precisely, suppose that $\gamma$ is an indicial root of $\calL$, and set 
$\delta(\gamma) = \gamma + n/2$. Then $r^\gamma$ lies in $r^{\delta(\gamma) - \e} L^2((0,1);r^{n-1}dr)$ 
for every $\e > 0$, but not when $\e=0$; in other words, $r^\gamma$ `just fails' to lie in $r^{\delta(\gamma)}L^2$. 
There is a sequence of cutoffs $u_j$ of $r^\gamma \phi(z)$ (where $\phi$ is associated to $\gamma$ as
above) with disjoint supports such that
\[
||u_j||_{r^{\delta(\gamma)}L^2} = 1, \qquad ||\calL u_j||_{r^{\delta(\gamma)-2}L^2} \to 0.
\]
This proves that \eqref{eq:mapwss} does not have closed range when $\delta = \delta(\gamma)$, but in fact, this is the only 
obstruction to Fredholmness: 
\begin{Prop}
Let $\calL$ be a generalized Laplacian on a compact space $(M,g)$ with isolated conic singularities.  Then (\ref{eq:mapwss}) 
is Fredholm provided $\delta \neq \delta(\gamma)$ for some indicial root $\gamma$ of $\calL$. 
\label{pr:Fredconic}
\end{Prop}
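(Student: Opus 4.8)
The plan is to construct a two-sided parametrix for $\calL$ modulo compact operators; the Fredholm property of \eqref{eq:mapwss} then follows by the standard functional-analytic argument. The parametrix is assembled, via a partition of unity subordinate to a cover of $M$ by the model neighbourhood $\calV \cong C_{0,1}(N)$ of the cone point together with finitely many relatively compact charts in $M \setminus \{p\}$: on the charts away from $p$ the operator $\calL$ is uniformly elliptic in the ordinary sense, so ordinary interior theory supplies local parametrices $G_\infty$ with $\calL G_\infty = \mathrm{Id} - R_\infty$, $R_\infty$ smoothing. Thus all the real work is concentrated near $r = 0$, where it is cleanest to work with the $b$-operator $A = r\calL r$ of \eqref{eq:mgL}, the two factors of $r$ being inserted to make $A$ symmetric on the relevant $L^2$ space.

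Near $r = 0$ I would use the Mellin transform in $r$. By \eqref{eq:nantes}, the indicial family of $A$ is the holomorphic family of second-order elliptic operators on the compact smooth manifold $N$,
\[
I(A;\gamma) = \calL_N - (\gamma^2 + n\gamma + n - 1),
\]
invertible for all $\gamma \in \CC$ lying off the discrete set of indicial roots. Since $\calL_N$ is a self-adjoint generalized Laplacian on the compact manifold $N$, it has discrete spectrum bounded below, so this root set meets every horizontal strip $\{|\mathrm{Re}\,\gamma| \le C\}$ in finitely many points and $I(A;\gamma)^{-1}$ is polynomially bounded in $|\mathrm{Im}\,\gamma|$ along any root-free vertical line. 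The hypothesis $\delta \ne \delta(\gamma)$ for all indicial roots $\gamma$ of $\calL$ says precisely that the vertical line associated to the weight $\delta$ (the one on which $r^\gamma$ just fails to be $r^\delta L^2$, for $\delta = \delta(\gamma)$) carries no indicial root, hence, by discreteness, neither does a narrow closed strip about it. Inverting $I(A;\gamma)^{-1}$ along this line and taking the inverse Mellin transform yields a first parametrix for the model operator $A - rE'$, with an error that gains a genuine power $r^\e$, $\e > 0$, thanks to the width of the strip. The remaining subprincipal term $rE'$ is absorbed by a Neumann-type iteration, its extra factor of $r$ supplying an additional weight gain at each step; summing, one obtains a local parametrix $G_0$ near $r = 0$ with $\calL G_0 = \mathrm{Id} - R_0$, where $R_0$ gains both a derivative and a positive power of $r$.

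Gluing $G_0$ to the interior parametrix with cutoffs produces $\calL G = \mathrm{Id} - R$ with $R$ smoothing in the interior and gaining a derivative and a power of $r$ at the cone point. The decisive point is that such an $R$ is \emph{compact} on $r^{\delta-2}H^\ell_b(M)$: this is the conic analogue of Rellich's lemma, namely that $r^{\delta'}H^{\ell'}_b(M) \hookrightarrow r^\delta H^\ell_b(M)$ is compact whenever $\delta' > \delta$ and $\ell' > \ell$, which one proves by splitting $\calV$ into the dyadic shells $\{2^{-j-1} \le r \le 2^{-j}\}$, applying ordinary Rellich on each shell, and using the weight gain to make the tail in $j$ uniformly small. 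Since $\calL$ is formally self-adjoint, taking adjoints turns this right parametrix into a left parametrix with compact error as well, so $\calL$ is Fredholm. I expect the only genuinely delicate steps to be (i) the Neumann iteration that absorbs $rE'$ — one must check the gain is uniform in $\ell$ and does not carry the weight across the next indicial value — and (ii) the careful proof of the conic Rellich lemma, which is slightly nonstandard because the cross-sections $\{r = \mathrm{const}\}$ of $\calV$ do not shrink as $r \to 0$. Both are standard in the $b$-calculus of Melrose \cite{Melrose}, from which the proposition may in fact simply be quoted; a hands-on alternative is the separation-of-variables approach of Kondrat'ev and Lockhart--McOwen, expanding in eigensections of $\calL_N$ and solving the resulting Euler-type ODEs on $(0,1)$ mode by mode.
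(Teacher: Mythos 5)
Your proposal is correct and follows essentially the same route as the paper: the authors also reduce to the $b$-operator $A = r\calL r$, invert the indicial family $I_\zeta(A)^{-1}$ (the resolvent of $\calL_N$) along a pole-free line via the Mellin transform — the hypothesis $\delta \neq \delta(\gamma)$ being exactly what keeps that line free of poles — absorb the error term $rE'$ by a Neumann series on a small collar $r \leq r_0$, and patch with an interior parametrix by cutoffs, deferring full details to \cite{Ma-edge} and \cite{Melrose} and carrying out the analogous construction explicitly in the depth $2$ case (Proposition \ref{pr:fredsv}). The only cosmetic difference is that you phrase the smallness of the Neumann-series error as a weight gain $r^\e$ while the paper uses smallness of the operator norm on $r \leq r_0$, and you spell out the conic Rellich compactness step which the paper leaves implicit.
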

This is Proposition 4.4 in \cite{Ma-edge}, applied to the operator $r^2 \calL$. (This is a much older result, of course, 
dating back to the work of Kondratiev in the '60's, and also follows directly from various well-known papers including
those of Cheeger, Melrose and Lockhart-McOwen in the '70's and '80's; for this simple Fredholm statement alone, the
proof is quite elementary, using only separation of variables.)

The indicial roots play a fundamental role in the regularity theory of solutions too:
\begin{Prop}
Let $(M,g)$ and $\calL$ be as above, and $\delta \in \R$ a weight such that $\delta \neq \delta(\gamma)$ for any indicial root 
$\gamma$ of $\calL$. If $f \in r^{\delta-2}H^{\ell}_b(M)$ for any $\ell \in \NN$  and $u \in r^\delta L^2$ is a solution of 
$\calL u = f$, then $u \in r^{\delta} H^{\ell+2}_b(M)$. In particular, taking the intersection over all $\ell$,  if $f \in \calA(M)$, 
then $u \in \calA(M)$, i.e.\ if $f$ is conormal, then so is $u$.  Moreover, if $f \in \calA_{\phg} \cap r^{\delta-2}L^2$ (e.g.\ if
$f \equiv 0$) and $u \in r^\delta L^2$, then $u \in \calA_{\phg} \cap r^\delta H^\ell_b$ for any $\ell \geq 0$.  The exponents 
in the polyhomogeneous expansion of $u$ are all of the form $\gamma + \ell$ where $\gamma$ either is an indicial root 
of $\calL$ or else $\gamma = \gamma' + 2$ where $\gamma'$ appears in the polyhomogeneous expansion for $f$, and in 
either case with $\delta(\gamma) > \delta$.
\label{pr:conicreg}
\end{Prop}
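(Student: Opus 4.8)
The plan is to prove Proposition~\ref{pr:conicreg} by a standard bootstrap argument, combining the Fredholm theory of Proposition~\ref{pr:Fredconic}, a local parametrix construction, and separation of variables on the link $N$. The statement has three pieces of increasing strength: (i) Sobolev regularity in a fixed weighted space, (ii) conormality when $f$ is conormal, and (iii) polyhomogeneity with the stated exponents when $f$ is polyhomogeneous (e.g.\ $f\equiv 0$); I would prove them in this order.

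First, for the $H^{\ell+2}_b$ regularity statement: since $\delta\neq\delta(\gamma)$ for every indicial root $\gamma$, the map $\calL:r^\delta H^{k+2}_b\to r^{\delta-2}H^k_b$ is Fredholm for every $k$, and one checks that its kernel is independent of $k$ (elements of the kernel are automatically smooth on $M\setminus\Sigma$, and a global parametrix improves their regularity up to the singular locus). Given $u\in r^\delta L^2$ with $\calL u=f\in r^{\delta-2}H^\ell_b$, one writes $u=u_0+u_1$ where $u_0\in r^\delta H^{\ell+2}_b$ is a solution produced by the (right) parametrix $G$, namely $u_0=Gf$, and $u_1=u-u_0$ lies in the kernel of $\calL$ acting on $r^\delta L^2$. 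Because the kernel computed at $L^2$-regularity coincides with the kernel computed at $H^{\ell+2}_b$-regularity, $u_1\in r^\delta H^{\ell+2}_b$ as well, hence $u\in r^\delta H^{\ell+2}_b$. Intersecting over all $\ell$ gives conormality of $u$ whenever $f$ is conormal. Away from $r=0$ this is just interior elliptic regularity; near $r=0$ the content is the mapping property of the $b$-parametrix on the weighted $b$-Sobolev scale, which is exactly what Proposition~\ref{pr:Fredconic} encodes.

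For the polyhomogeneity statement, I would use the standard indicial-root iteration for $b$-operators (as in Melrose's $b$-calculus, \cite{Melrose}). Suppose $f\in\calA_{\phg}\cap r^{\delta-2}L^2$ and $u\in r^\delta L^2$, so by the previous paragraph $u$ is conormal of weight $\delta$ (after an arbitrarily small loss, which the $L^2$-based conormal spaces absorb). Write $A=r\calL r$ and recall from \eqref{eq:nantes} that $A(r^\gamma\phi)=r^\gamma(\calL_N-\gamma^2-n\gamma-n+1)\phi+\calO(r^{\gamma+1})$. Expanding $u$ and $f$ in eigenfunctions of $\calL_N$ on the link and matching powers of $r$, one sees that $u$ must have a leading term $r^{\gamma_0}(\log r)^{\ell_0}a_0(z)$ where $\gamma_0$ is either an indicial root of $\calL$ with $\delta(\gamma_0)>\delta$, or $\gamma_0=\gamma'+2$ for some exponent $\gamma'$ appearing in the expansion of $f$; the logarithm enters only when two such candidate exponents coincide modulo the eigenvalue structure. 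Subtracting this leading term (times a cutoff) from $u$ shifts the weight up by at least a definite amount, strictly less than the gap to the next indicial weight, and one repeats. Since the indicial roots form a discrete set with $\mathrm{Re}\,\gamma\to\infty$ and only finitely many lie in any half-plane, the weights $\gamma_j$ produced this way satisfy $\mathrm{Re}\,\gamma_j\to\infty$, and the asymptotic sum converges in the conormal sense; this produces the claimed polyhomogeneous expansion, with exponents exactly of the stated form $\gamma+\ell$.

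The main obstacle is bookkeeping rather than conceptual: one must verify carefully that each iteration step genuinely gains regularity --- i.e.\ that after peeling off the leading asymptotic term the remainder lies in a strictly better weighted space --- and that logarithmic terms are generated only by the integer coincidences of indicial roots (so that only nonnegative integer powers of $\log r$ occur, as claimed), without the powers of $\log$ proliferating uncontrollably. This is handled by noting that $\calL_N-\gamma^2-n\gamma-n+1$ is invertible on the orthogonal complement of its kernel in $L^2(N)$ with norm controlled uniformly as one moves $\gamma$ off the indicial set, so inverting it at each stage costs at most one extra power of $\log r$ per coincidence, and the finiteness of indicial roots in any strip bounds the total log power in each coefficient. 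The convergence of the formal series to the actual solution follows from the conormal a priori bound of the first part together with the Fredholm estimate, by a now-routine Borel summation plus error-absorption argument.
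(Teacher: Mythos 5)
Your proposal is correct in outline, and both halves are standard, but the route differs from the paper's. The paper does not prove Proposition~\ref{pr:conicreg} by peeling off leading terms of a formal series; it declares that ``the two main tools\dots are the Mellin transform and the indicial operator of $A$,'' refers to \cite{Ma-edge} and \cite{Melrose}, and carries the argument out in full only in the harder restricted depth~$2$ setting (Proposition~\ref{pr:polysv}), whose proof is the template: write $\calL u=f$ as $Av=h$ with $v=r^{-1}u$, Mellin transform to get $v_M(\zeta,\cdot)=I_\zeta(A)^{-1}\bigl(h-rE'v\bigr)_M(\zeta,\cdot)$ via \eqref{eq:indop}, then (a) deduce conormality from the bound $||I_\zeta(A)^{-1}||\leq C(1+|\zeta|)^{-2}$ along pole-free horizontal lines, which converts into stable regularity under $r\del_r$, and (b) deduce polyhomogeneity by continuing $v_M$ meromorphically strip by strip, the poles of $I_\zeta(A)^{-1}$ at $\zeta=i\gamma$ producing the terms $r^{\gamma+\ell}(\log r)^k\phi(z)$ as residues --- a pole of order $k+1$ automatically yields the $\log$ powers up to $k$, so the bookkeeping you flag as the main obstacle is built into the meromorphy. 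Your formal-matching-plus-Borel-summation argument is more elementary and makes the exponent arithmetic ($\gamma+\ell$ versus $\gamma'+2+\ell$) transparent, but it must establish conormality separately first and track the $\log$ powers by hand; the Mellin route packages both into properties of one meromorphic family. Two small repairs to your first step: for a pure regularity statement the clean device is a \emph{left} parametrix $G\calL=I-K'$ with $K'$ residual, giving $u=Gf+K'u$ with both terms in $r^\delta H^{\ell+2}_b$; your decomposition $u=Gf+u_1$ with ``$u_1$ in the kernel'' requires $G$ to be a generalized inverse and $f=\calL u$ to lie in the range of $\calL$ on the higher-regularity space, i.e.\ stability in $\ell$ of the cokernel as well as the kernel. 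Also, the remainder after subtracting a leading term should gain \emph{at least} a fixed amount of weight (so that finitely many iterations pass any given indicial weight), not ``strictly less than the gap to the next indicial weight'' as written.
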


The two main tools used in proving both of these results are the Mellin transform, and the indicial operator of $A$.
By definition, the Mellin transform of $u(r,z)$ is given by
\[
u(r,z) \longmapsto u_M(\zeta,z) := \int_0^\infty r^{i\zeta-1} u(r,z)\, dr;
\]
this is simply the Fourier transform in logarithmic coordinates $t = - \log r$. The \emph{indicial operator} of $A$ is defined by
\begin{equation}
I(A) : = -(r \del_r)^2 - n r \del_r + \calL_N - (n-1);
\label{eq:indicop}
\end{equation}
this acts on functions on $\RR^+ \times N$ and should be understood as a model for $A$ at $r = 0$. We also write $I(\calL) = r^{-1} I(A) r^{-1}$.  
The fundamental relationship 
connecting these two objects is the formula
\begin{equation}
( I(A)u )_M(\zeta,z) = \left(\calL_N +\zeta^2 + in\zeta - (n-1)\right)u_M(\zeta,z),
\label{eq:indop}
\end{equation}
which is very closely related to \eqref{eq:nantes}. The inverse of the indicial operator provides a good parametrix for $A$. 
This is effective because $I(A)$ is invariant under the $\RR^+$ action, $(r,z) \mapsto (\lambda r, z)$, so its analysis may 
be reduced to that of a family of operators on $N$. Indeed, denote by $I_\zeta(A)$ the operator on the right in \eqref{eq:indop};
this is called the indicial family of $A$. The family of operators $I_\zeta(A)^{-1}$ is essentially the resolvent of $\calL_N$, 
and since $\calL_N$ is a self-adjoint elliptic operator on a compact manifold, it has discrete spectrum, hence this 
family extends meromorphically to all of $\CC$.  Using \eqref{eq:nantes} and 
\eqref{eq:indop}, its poles occur precisely at $\zeta = i\gamma$ where $\gamma$ is an indicial root of $A$.  
A parametrix for $A$ itself may be constructed by a Fourier synthesis of the operators $I_\zeta(A)^{-1}$. 
We give further details on this in the slightly more general setting of depth $2$ iterated edge spaces 
later in this section and refer to \cite{Ma-edge} and \cite{Melrose} for complete proofs.

The important application of this regularity theorem here is to the problem of characterizing self-adjoint extensions of the 
generalized Laplacian $\calL$. Consider the mapping
\[
\calL: L^2(M) \longrightarrow L^2(M).
\]
It is proved in \cite{Gil-Mend} (in the scalar case, but the techniques readily apply to the vector-valued case), see also \cite{Lesch}, that 
if $u \in \calD_{\max}$, i.e.\ $u \in L^2$ and $\calL u \in L^2$,  then using Proposition \ref{pr:conicreg}, 
\begin{equation}
u = \sum_{\gamma_j \in (-n/2, -n/2+2]}  c_j r^{\gamma_j}  \phi_j(z) + w, \qquad w \in r^2 H^2_b(M).
\label{eq:expansion}
\end{equation}
Furthermore, it follows directly from the method of proof that the mappings $\calD_{\max} \ni u \mapsto c_j(u)$ and  $\calD_{\max}\ni 
u \mapsto w \in r^2 H^2_b(M)$ are both continuous. 
In general one would also expect log terms in this expansion, but in our specific applications these appear 
rarely, so for simplicity we omit them in our discussion of the general theory. 

On the other hand, one can also show that $u \in \calD_{\min}$ if and only if all $c_j = 0$, i.e.\ $ u \in r^2 H^2_b(M)$.
Thus it is precisely the indicial roots of $\calL$ in the range $(-n/2, -n/2+2]$, which we call the {\it critical interval},
that prevent $\calL$ from being essentially self-adjoint.  It is clear from the definitions that indicial roots exist in this critical interval 
if and only if $\calL_N$ has eigenvalues lying in the interval $[0,1 - (-n/2+1)^2]$; in particular this can happen only if $n\leq 4$. 

Slightly more generally, various closed extensions are characterized by imposing linear algebraic conditions
on the coefficients $c_j$. For example, if $\calL = \nabla^* \nabla$, then $u \in \calD_{\mathrm{Fr}}$ is
equivalent to $c_j = 0$ for all $j$ such that $\gamma_j \in (-n/2, -n/2+1]$ (so we admit only the indicial
roots in the `upper half' of the critical interval, i.e.\ in $(-n/2+1,-n/2+2]$).  In our specific application below, 
the value $-n/2 + 1$ is an indicial root of multiplicity two, so the corresponding solution has a log term in its
expansion there; the Friedrichs extension allows solutions which have the $r^{-n/2+1}$ term, but not the
$r^{-n/2+1}\log r$, in their expansions. 

The fact that there are only finitely many indicial roots in this interval implies that any choice of domain $\calD$ on
which $\calL$ is self-adjoint has the property that $\calD \subset r^\e H^2_b$ for some $\e > 0$. By the
$L^2$ version of the Arzela-Ascoli theorem, this space is compactly contained in $L^2$, which gives the
\begin{Cor}
Let $(\calL, \calD)$ be a self-adjoint extension of $\calL$ acting on $L^2$. Then this operator has
discrete spectrum consisting only of eigenvalues of finite multiplicity. Each eigenfunction $\phi \in \calD$
is polyhomogeneous.
\end{Cor}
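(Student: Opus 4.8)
The plan is to deduce the corollary from two ingredients: that every self-adjoint domain $\calD$ is contained in a fixed weighted $b$-Sobolev space $r^\e H^2_b(M)$ with $\e > 0$, and that the inclusion of this space into $L^2(M)$ is compact. Granting these, $\calL_\calD$ has compact resolvent, and the spectral theorem for self-adjoint operators with compact resolvent yields discreteness and finite multiplicity; the polyhomogeneity of eigenfunctions will then be a direct application of Proposition~\ref{pr:conicreg}.

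First I would record that any closed extension of $\calL$ with core domain $\calC^\infty_0(M\setminus\Sigma)$ satisfies $\calD \subseteq \calD_{\max}(\calL)$, so it suffices to treat $\calD_{\max}$. By the structure result \eqref{eq:expansion}, each $u \in \calD_{\max}$ has the form $u = \sum_{\gamma_j \in (-n/2,\,-n/2+2]} c_j\, r^{\gamma_j}\phi_j(z) + w$ with $w \in r^2 H^2_b(M)$, and the sum is finite since $\calL_N$ has only finitely many eigenvalues in the relevant bounded interval. Each monomial $r^{\gamma_j}\phi_j$ (together with any accompanying $\log$ term) lies in $r^\delta H^2_b(M)$ for all $\delta < \delta(\gamma_j) = \gamma_j + n/2$, and $\delta(\gamma_j) > 0$ because $\gamma_j > -n/2$ throughout the critical interval. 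Setting $\e := \min\bigl(1, \tfrac12 \min_j \delta(\gamma_j)\bigr) > 0$ (or $\e = 2$ if the critical interval contains no indicial root) I conclude $\calD \subseteq \calD_{\max}(\calL) \subseteq r^\e H^2_b(M)$.

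The next step, and the only genuinely analytic one, is the weighted Rellich lemma: the inclusion $r^\e H^2_b(M) \hookrightarrow L^2(M)$ is compact. Away from $\Sigma$ this is classical; near a cone point I would split into dyadic shells $\{r \sim 2^{-k}\}$, rescale each to unit size — so that $H^2_b$ becomes the ordinary Sobolev space $H^2$ and Rellich applies on each shell — and exploit the prefactor $r^\e$ to make the $L^2$-mass carried by shells with $k$ large uniformly small, so that a diagonal argument extracts an $L^2$-convergent subsequence. Since $\calD$ is complete in its graph norm (the extension being closed) and maps into $r^\e H^2_b(M)$ with closed graph (both include continuously into $L^2$), the closed graph theorem makes $\calD \hookrightarrow r^\e H^2_b(M)$ bounded. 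Hence, for $\lambda$ in the nonempty resolvent set of the self-adjoint operator $\calL_\calD$, the resolvent $(\calL_\calD - \lambda)^{-1}$ factors as $L^2 \to \calD \hookrightarrow r^\e H^2_b(M) \hookrightarrow L^2$ with the last arrow compact, so $\calL_\calD$ has compact resolvent and its spectrum consists of isolated eigenvalues of finite multiplicity.

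Finally, for an eigenfunction $\calL\phi = \mu\phi$ with $\phi \in \calD$, I would observe that $\calL - \mu$ differs from the generalized Laplacian $\calL$ only by a constant zeroth-order term, which enters solely the $\calO(r)$ remainder of $r(\calL - \mu)r$ and therefore changes neither the indicial operator nor the conclusion of Proposition~\ref{pr:conicreg}. Since $\phi \in \calD_{\max} \subseteq r^\e H^2_b(M) \subseteq r^\delta L^2$ for some $\delta \in (0,\e]$ which may be taken not to be an indicial weight (perturbing $\e$ slightly if necessary, the indicial weights being discrete), Proposition~\ref{pr:conicreg} applied to $(\calL - \mu)\phi = 0$ with $f \equiv 0$ gives $\phi \in \calA_{\phg}(M) \cap r^\delta H^\ell_b(M)$ for all $\ell$, hence $\phi$ is polyhomogeneous. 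The only place where real care is needed is the weighted Rellich step, where the strictly positive weight $\e$ coming from the finiteness of indicial roots in the critical interval is exactly what makes the embedding compact; everything else is bookkeeping with the structure and regularity theory already established above.
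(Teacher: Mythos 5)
Your proposal is correct and follows exactly the paper's own route: the finiteness of indicial roots in the critical interval forces $\calD \subseteq r^\e H^2_b(M)$ for some $\e>0$, the compact inclusion of that space into $L^2$ (the paper's ``$L^2$ Arzela--Ascoli'') gives compact resolvent and hence discrete spectrum of finite multiplicity, and polyhomogeneity of eigenfunctions is Proposition~\ref{pr:conicreg}. You simply supply more detail (dyadic-shell Rellich argument, closed graph theorem, the observation that subtracting $\mu$ does not change the indicial data) than the paper's two-sentence justification.
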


In the proofs of our main results below, we use these expansions (and their analogues for restricted depth $2$ 
iterated edge operators) to show that if $u$ lies in the Friedrichs domain of $\calL$ (where $\calL = P$ or $L$),
then we can justify certain integrations by parts by either showing that certain coefficients $c_j$ vanish, or if they
do not, then by showing that the associated functions $r^{\gamma_j}\phi_j(z)$ have certain special properties. 

\subsection*{Incomplete edge operators}
The next case is when $(M,g)$ has an incomplete edge singularity, i.e.\ the singular stratum $S$ is a closed manifold
and some neighbourhood around it in $M$ is a cone-bundle over $S$ with fibre the truncated cone $C_{0,1}(N)$
over a compact smooth manifold $N$. If $\calL$ is a generalized Laplacian on $(M,g)$ and $\rho$ is the radial 
function on the conic fibres, then we define $A$ as before by $A = \rho \calL \rho$. 

We define the indicial roots of $A$ and $\calL$ exactly as in the conic case. Note that when defining the indicial operator for $A$,
we also drop the $\rho \del_y$ derivatives, where $y$ is a coordinate along $S$, since in terms of formal power series
(or polyhomogeneous) expansions, these are lower order terms.  However, these tangential derivative terms are quite important
in other ways, and in this setting $I(A)$ does not capture all the appropriate features of $A$ near $S$. Accordingly, we introduce 
the {\it normal operator} of $A$ at any point $p \in S$, defined as 
\begin{equation}
N(A) : = -(\rho \del_\rho)^2 - (n-m) \rho \del_\rho + \calL_N + \rho^2 \Delta_y - (n-m-1),  
\label{eq:normalop}
\end{equation}
where $\calL_N$ is the generalized Laplacian with respect to the induced metric on the conic link $N$ at the point $p$.
It turns out that in many natural geometric problems, including the present setting, the induced metric on the link $N$ 
is independent of $p \in S$, hence the indicial roots of $N(A)$ do not depend on $p \in S$. For simplicity
we assume that this is the case in all that follows. 

Although $N(A)$ looks similar to $A$ itself, the difference is that we are regarding $(\rho,y)$ as global linear variables on the 
half-space $\RR^+ \times \RR^m$, $m = \dim S$, and have dropped all `higher order' terms from $A$.  As with the indicial
operator for conic operators, the inverse (or pseudo-inverse) of $N(A)$ is the main ingredient in the construction of a parametrix for $A$. Understanding
the invertibility of $N(A)$ is feasible because, once again, it has many symmetries, namely it is invariant by translations
in the $y$ variables and by dilations $(\rho,y) \mapsto (\lambda \rho, \lambda y)$, so we can ultimately reduce its study
to that of a family of operators on $N \times \RR^+$. 

Unlike the conic setting, it is not true that 
\begin{equation}
A: \rho^\delta H^2_{\mathrm{e}}(M) \longrightarrow \rho^{\delta}L^2(M)
\label{eq:Lwesp}
\end{equation}
is Fredholm except when $\delta$ takes on one of the special indicial values. Indeed, although the mapping \eqref{eq:Lwesp} 
has closed range when $\delta \neq \delta(\gamma)$ for some indicial root $\gamma$, it has infinite dimensional 
kernel or cokernel, except possibly inside a narrow interval of values of $\delta$. To show that \eqref{eq:Lwesp} 
is Fredholm it is necessary to impose the additional hypothesis that $N(A)$ is invertible on $\rho^\delta L^2$. To prove 
that arbitrary solutions $u \in \rho^\delta L^2$ of $\calL u = 0$ are regular (polyhomogeneous) as $\rho \to 0$, we 
must assume that $N(A)$ is injective on $\rho^\delta L^2$. 

To phrase this properly,  recall first that the volume form with respect to which $\calL$ is symmetric is a smooth 
nonvanishing multiple of $\rho^{n-m-1} d\rho dV_S dV_N$, so the critical power of $\rho$ which is at the `$L^2$ 
cutoff' is different than in the conic case. Thus we now set $\delta(\gamma) := \gamma + (\dim N+1)/2 = \gamma + (n-m)/2$. 
\begin{Prop}
Suppose that $(M,g)$ is a manifold with an incomplete edge singularity, and let $\calL$ and $A$ be as above. Then \eqref{eq:Lwesp}
is Fredholm if and only if 
\[
N(A): \rho^\delta H^2_\edge(\RR^+ \times \RR^m \times N) \longrightarrow \rho^\delta L^2(\RR^+ \times \RR^m \times N)
\]
is invertible. (This implies that $\delta \neq \delta(\gamma)$ for any indicial root $\gamma$ of $A$, so it is unnecessary
to add this as an extra hypothesis.) 
\label{pr:frededge}
\end{Prop}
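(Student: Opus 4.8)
This is one of the foundational results of the edge calculus; we only outline the strategy and refer to \cite{Ma-edge} for the details. The plan is to establish the two implications by different means: invertibility of $N(A)$ gives the Fredholm property of \eqref{eq:Lwesp} via a parametrix construction, while any failure of invertibility of $N(A)$ is converted into a singular sequence for $A$.

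For the forward implication, assume $N(A)$ is invertible on $\rho^\delta L^2$. The first step is to describe $N(A)^{-1}$ explicitly enough to be usable. Since $N(A)$ is invariant under translations in $y$ and under the dilations $(\rho,y)\mapsto(\lambda\rho,\lambda y)$, conjugating by the Fourier transform in $y$ replaces it by the one-parameter family
\[
\widehat{N(A)}(\xi) = -(\rho\del_\rho)^2 - (n-m)\rho\del_\rho + \calL_N + \rho^2|\xi|^2 - (n-m-1), \qquad \xi \in \RR^m.
\]
At $\xi = 0$ this is exactly the indicial operator of $A$, a Mellin-type operator on $\RR^+\times N$ whose invertibility on $\rho^\delta L^2$ is equivalent to $\delta \neq \delta(\gamma)$ for every indicial root $\gamma$ of $A$; thus invertibility of $N(A)$ automatically forces $\delta$ to be a noncritical weight, which is the parenthetical assertion in the statement. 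For $\xi \neq 0$ the rescaling $\rho\mapsto\rho/|\xi|$ reduces $\widehat{N(A)}(\xi)$ to one fixed Bessel-type operator on $\RR^+\times N$, whose inverse is controlled uniformly and, through the dilation invariance, matches compatibly with the conic regime near $\xi = 0$; Fourier synthesis then exhibits $N(A)^{-1}$ as an element of the (large) edge pseudodifferential calculus. The second step is the standard parametrix patching: $A$ is elliptic in the interior, so with a partition of unity subordinate to a cover of $M$ by the edge neighbourhood of $S$ and a relatively compact interior region, one glues $N(A)^{-1}$ near $S$ to the usual interior parametrix elsewhere, obtaining $G_0$ with $AG_0 = I - R$ and $G_0 A = I - R'$. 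The composition theorem of the edge calculus shows that $R$ and $R'$ gain a positive power of $\rho$ together with a smoothing, hence are compact on the weighted edge Sobolev spaces by the edge analogue of the Rellich lemma; a finite correction promotes $G_0$ to a genuine two-sided parametrix modulo compact operators, so \eqref{eq:Lwesp} is Fredholm.

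For the converse we argue by contraposition. If $N(A)$ is not injective, pick $0 \neq \phi \in \rho^\delta L^2(\RR^+\times\RR^m\times N)$ with $N(A)\phi = 0$. Because $N(A)$ is scale invariant, every rescaling $\phi_\lambda(\rho,y) := \phi(\rho/\lambda,\,y/\lambda)$ is again a null solution, and as $\lambda\to 0$ these concentrate at $\{\rho = 0,\ y = 0\}$. Fixing a point $p_0 \in S$, edge coordinates $(\rho,y)$ near it, and a cutoff $\chi$ equal to $1$ near $p_0$ and supported where these coordinates are valid, the normalized functions $v_j := \chi\,\phi_{\lambda_j}/\|\chi\,\phi_{\lambda_j}\|_{\rho^\delta H^2_\edge}$ satisfy $\|A v_j\|_{\rho^\delta L^2} \to 0$ along a suitable sequence $\lambda_j\to 0$ — the contributions to $A v_j$ coming only from the commutator $[A,\chi]$, supported where $\phi_{\lambda_j}$ is already small, and from the difference $A - N(A)$, which carries an extra power of $\rho$ and is therefore $O(\lambda_j)$ on the scale $\rho\sim\lambda_j$ — while the $v_j$ can be arranged to have essentially disjoint supports. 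Such a sequence is a singular sequence for $A$, incompatible with the Fredholm property. If instead $N(A)$ fails to be surjective, the same device applied to the adjoint of $N(A)$ with respect to the weighted inner product — an operator of the same structural type — produces an infinite-dimensional approximate cokernel for $A$. In either case \eqref{eq:Lwesp} is not Fredholm.

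The crux of the argument is the sufficiency direction, and within it two points: the uniform analysis of the family $\widehat{N(A)}(\xi)$ across the transition between the Bessel regime $|\xi|\rho \gtrsim 1$ and the conic regime $|\xi|\rho \lesssim 1$, and the verification that the parametrix remainders genuinely lie in an operator ideal acting compactly on $\rho^\delta H^\ell_\edge$. Both rest on the full edge pseudodifferential calculus — the structure of Schwartz kernels on the edge double space, the composition theorem, and the mapping properties on weighted Sobolev spaces — as developed in \cite{Ma-edge}.
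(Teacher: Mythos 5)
Your proposal follows essentially the same route as the paper, which likewise only sketches the argument and defers the heavy lifting (the Fourier transform in $y$, the Bessel analysis of $\widehat{N}(A)$, the passage from $N(A)^{-1}$ to a parametrix with compact remainder) to Theorem 6.1 of \cite{Ma-edge}. The only cosmetic difference is in the converse direction, where you build an explicit concentrating singular sequence from a null solution of $N(A)$ while the paper instead asserts that non-injectivity (resp.\ non-surjectivity) of $N(A)$ propagates, via its translation and dilation invariance, to an infinite-dimensional kernel (resp.\ cokernel) for $A$ itself --- the same scale-invariance mechanism in both cases.
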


The proof is more involved than in the conic case, and relies on a parametrix $G$ for $A$ which is constructed in
the pseudodifferential edge calculus as developed in \cite{Ma-edge}. The corresponding parametrix in the conic 
case is easier since it is constructed from $I_\zeta(A)^{-1}$, which as we have seen is essentially just the resolvent 
family of $\calL_N$, hence well-understood.  In this edge setting, however, one must first analyze the inverse of $N(A)$ 
and then develop a formalism to localize along the edge $S$ and regard $A$ as a perturbation of 
the family of normal operators $N_q(A)$, $q \in S$. Again in contrast to the conic setting, $N(A)$ is not invariant 
under dilations in $\rho$, so the Mellin transform is not the only tool needed; indeed, we must also exploit the translation
invariance of $N(A)$ in the tangential variable $y$. Taking the Fourier transform in $y$ reduces $N(A)$ to 
\[
\widehat{N}(A) = -(\rho \del_\rho)^2 - (n-m) \rho \del_\rho + \calL_N - (n-m-1 - \rho^2 |\xi|^2),
\]
which is a family of $b$-operators near $\rho=0$, with indicial roots independent of the Fourier transform variable $\xi$.  However,
there is an additional `Bessel structure' of these operators as $\rho \to \infty$ which must be taken into account, and in addition
we must understand the dependence on $\xi$ of the corresponding family of inverses. The inverse $N(A)^{-1}$ is then constructed 
by inverse Fourier transform in $y$. A further and less obvious step is to analyze the pointwise asymptotic behaviour of the Schwartz 
kernel for $N(A)^{-1}$. We refer to the paper \cite{Ma-edge} (where the above proposition appears as a particular case of Theorem 6.1) for complete details of this construction, as well as an explanation of how 
to pass from $N(A)^{-1}$ to a parametrix for $A$, and for the proof of the mapping properties of this parametrix. 

Suppose, however, that the nullspace of $N(A)$ in $\rho^\delta L^2$ is nontrivial. Because of the translation invariance in 
$y$ of this operator, it is easy to see that this nullspace must be infinite dimensional, and from this it follows that 
$A$ itself has an infinite dimensional nullspace on $\rho^\delta L^2$. Every solution of $Au = 0$ has stable regularity
with respect to $\calV_{\mathrm{e}}$, but most of these solutions are not polyhomogeneous. In this situation, a canonical
choice of solution to $Au = f$ which has the maximal regularity is the one which is orthogonal (in $\rho^\delta L^2$)
to the nullspace of $A$.  On the other hand, if $N(A)$ is injective on $\rho^\delta L^2$ and has closed range but is not
surjective, then by similar reasoning one finds that its range has infinite codimension, and that the same is true for $A$ too.
All of this can be proved using similar parametrix constructions. Indeed, if $N(A)$ is either injective with closed range or surjective 
on $\rho^\delta L^2$, but not necessarily invertible, then the same type of construction as above, i.e.\ taking tangential
Fourier transform, writing down the solution operator to the resulting ordinary differential operator, etc., yields 
a left or right inverse, respectively, for $N(A)$, and from this one can construct a left or right parametrix for $A$ itself. 
This can be used to prove the
\begin{Prop}
Suppose that $(M,g)$ is a manifold with an incomplete edge singularity, with $\calL$ and $A$ as above. Suppose that
$N(A)$ is injective on $\rho^\delta L^2$ and $\delta \neq \delta(\gamma)$ for any indicial root $\gamma$ of $A$. If
$f \in \rho^{\delta} L^2 \cap \calA_{\phg}$, then any solution $u\in \rho^\delta L^2$ to $Au = f$ satisfies 
$u \in \rho^\delta L^2 \cap \calA_{\phg}$. (Incidentally, this is localizable in the sense that if $f$ is only polyhomogeneous 
in some open neighbourhood in the edge $S$, then $u$ is polyhomogeneous in that same neighbourhood.)

On the other hand, suppose that $N(A)$ is surjective on $\rho^\delta L^2$ and $\delta \neq \delta(\gamma)$ for any indicial root $\gamma$ of $A$. Then the range of $A$ on $\rho^\delta L^2$
is closed and has finite codimension. If $f \in \rho^\delta L^2 \cap \calA_{\phg}$ lies in this range, then the solution $u \in \rho^\delta L^2$
which is orthogonal to the nullspace of $A$ satisfies $u \in \rho^\delta L^2 \cap \calA_{\phg}$. 
\label{pr:regedge}
\end{Prop}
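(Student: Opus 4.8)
The plan is to package the two assertions as elementary consequences of the edge pseudodifferential parametrix for $A$ constructed in \cite{Ma-edge}, using the interior-and-edge regularity statement of Proposition \ref{pr:nantes2} as a starting point. Concretely, from that construction one extracts the following facts. When $N(A)$ is one-sidedly invertible on $\rho^\delta L^2$ and $\delta \ne \delta(\gamma)$ for every indicial root $\gamma$ of $A$, the operator $A$ admits a corresponding one-sided parametrix $G$ of order $-2$ in the edge calculus, with remainder a \emph{residual} edge operator $R$ --- i.e.\ one whose Schwartz kernel is polyhomogeneous on the edge double space and whose index sets lie strictly above the $L^2$ line (this last point being where $\delta \ne \delta(\gamma)$ enters). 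Moreover $G$ is bounded $\rho^\delta L^2 \to \rho^\delta H^2_\edge$ and carries $\rho^\delta L^2 \cap \calA_\phg$ into $\rho^\delta H^2_\edge \cap \calA_\phg$, with an index set for $Gf$ computable from the indicial roots of $A$ and the index set of $f$; it is pseudolocal along the edge $S$, carrying sections supported away from a point of $S$ to sections polyhomogeneous near that point; and any residual edge operator maps $\rho^\delta L^2$ into $\rho^\delta L^2 \cap \calA_\phg$.

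In the case where $N(A)$ is injective on $\rho^\delta L^2$ the construction gives a left parametrix, $GA = \mathrm{Id} - R$. Then for $u \in \rho^\delta L^2$ with $Au = f$ and $f \in \rho^\delta L^2 \cap \calA_\phg$, applying $G$ and using $GAu = u - Ru$ yields $u = Gf + Ru$, and both terms on the right are polyhomogeneous, so $u \in \rho^\delta L^2 \cap \calA_\phg$. For localizability I would let $\chi = \chi(y)$ cut off to an open subset $V \subseteq S$ on which $f$ is polyhomogeneous, with $\chi \equiv 1$ on some $V'' \Subset V$; since $u \in \rho^\delta H^\infty_\edge$ by Proposition \ref{pr:nantes2}, one has $A(\chi u) = \chi f + [A,\chi]u$ with $[A,\chi]u \in \rho^\delta H^\infty_\edge$ supported in $\{d\chi \ne 0\}$, hence $\chi u = G(\chi f) + G([A,\chi]u) + R(\chi u)$; the first and third terms are polyhomogeneous and the middle term is polyhomogeneous on $V''$ by pseudolocality, so $u$ is polyhomogeneous on $V''$, and letting $V'' \uparrow V$ gives the claim.

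When $N(A)$ is surjective on $\rho^\delta L^2$, I would pass to the formal adjoint $A^{*}$, an incomplete edge operator of the same type whose normal operator is the adjoint of $N(A)$ and is therefore injective on the dual weighted space (the excluded-weight condition being preserved under duality). The injective case applied to $A^{*}$ furnishes a left parametrix for $A^{*}$ modulo residual operators, hence a right parametrix $AG = \mathrm{Id} - R'$ for $A$; since a residual edge operator at a non-indicial weight is compact on $\rho^\delta L^2$, the operator $\mathrm{Id} - R'$ is Fredholm, so the range of $A$ on $\rho^\delta L^2$ is closed with finite codimension, and the cokernel --- isomorphic to $\ker A^{*}$ --- is finite-dimensional and, by the injective case again, consists of polyhomogeneous sections. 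Finally, if $f \in \rho^\delta L^2 \cap \calA_\phg$ lies in the range, the right-parametrix identity exhibits $Gf$ as a polyhomogeneous near-solution with polyhomogeneous defect $R'f$, and a short argument --- correcting $Gf$ within the range of $A$ and then imposing orthogonality to $\ker A$, which amounts to tracking index sets through the residual calculus exactly as in \cite{Ma-edge} --- shows that the solution orthogonal to $\ker A$ is itself polyhomogeneous.

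The substantive point throughout is the second of these facts: that the parametrix $G$, built by Fourier synthesis from $N(A)^{-1}$ (or its one-sided inverse), preserves polyhomogeneity with a controlled index family and is pseudolocal. This rests on a precise description of the Schwartz kernel of $N(A)^{-1}$ --- its asymptotics as $\rho \to 0$, governed by the indicial roots of $A$ through the Mellin transform, and as $\rho \to \infty$, governed by the Bessel structure of $\widehat N(A)$ --- together with the pushforward theorem for polyhomogeneous densities under the blow-down of the edge double space, which is what converts these kernel asymptotics into an expansion for $Gf$. This is precisely the content of \cite{Ma-edge}, where Proposition \ref{pr:frededge} appears as a special case of Theorem~6.1, so the plan is to quote it rather than reprove it; that bookkeeping is the main obstacle, and it is the reason the theory is developed inside that calculus in the first place.
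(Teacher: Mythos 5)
Your proposal is correct and follows essentially the same route as the paper, which likewise defers the substance to the one-sided parametrix construction in the edge calculus of \cite{Ma-edge} (one-sided inverses of $N(A)$ obtained by tangential Fourier transform and ODE solution operators, then left/right parametrices for $A$ with residual, polyhomogeneity-preserving remainders). The only cosmetic difference is that you obtain the right parametrix by dualizing the injective case rather than by directly constructing a right inverse for $N(A)$; this changes nothing essential.
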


We now discuss closed and self-adjoint extensions of the unbounded mapping 
\begin{equation*}
\calL:  L^2(M) \longrightarrow L^2(M).
\end{equation*}
Assume that $N(\calL): L^2 \to \rho^{-2}L^2$ is surjective, and let 
$G$ be the right (pseudo-)inverse provided by Proposition \ref{pr:regedge} for $\calL: L^2 \to \rho^{-2}L^2$. If $f \in L^2$ is in its range, then
$u = Gf$ lies in $L^2$ and is orthogonal to the nullspace of $\calL$, hence has the maximal regularity amongst all solutions
of this equation. If, moreover, $f \in L^2 \cap \calA_{\phg}$, then $u$ is also polyhomogeneous, so that
\begin{equation}
u = \sum_{\gamma_j \in (-(n-m)/2, -(n-m)/2+2]}  c_j(y) \rho^{\gamma_j}  \phi_j(z) + w, 
\label{eq:expansion2}
\end{equation}
where $w \in \rho^2 H^2_\mathrm{e}(M) \cap \calA_{\phg}$ and every coefficient $c_j(y)$ lies in $\calC^\infty$.  This expansion
can be used to characterize the minimal extension. Indeed, by taking graph closures we can prove that $u \in \calD_{\min}$ 
if and only if $u \in \rho^2 H^2_{\mathrm{e}}(M)$, i.e.\ $\calD_{\min} = \rho^2 H^2_{\mathrm{e}}$.  Note however, that if we only 
assume that $f \in L^2$, but not that $f$ is polyhomogeneous, then the coefficient functions $c_j(y)$ are no longer smooth and 
this expansion is valid only in a weak sense, i.e.\  it holds only if $u$ is paired with a test function in the $y$ variables.  
Nonetheless, it is still meaningful to specify a closed extension of the core domain of smooth functions with support 
in the regular set of $M$, or equivalently a domain $\calD$ such that $(\calL, \calD)$ is closed, by requiring that some 
of these coefficients, or linear combinations of them, vanish.  In our main application below we use the Friedrichs extension, 
which imposes the condition that $c_j = 0$ for all $j$ with $\gamma_j < -(n-m)/2 + 1$, and (since $-(n-m)/2 + 1$ is a 
double indicial root) which also omit the term $r^{-(n-m)/2 + 1}\log r$.

Suppose now that $(\calL, \calD)$ is a self-adjoint extension. In this edge setting it is no longer necessarily true that $(\calL,\calD)$ gives a Fredholm problem; in the conic case this was guaranteed by the fact that $\calD$ includes compactly
in $L^2$, but here this may not be the case.  The criterion for this to hold is simple enough, fortunately. Since we
are only considering domains $\calD$ which are defined by imposing vanishing conditions on certain of the coefficients
$c_j(y)$ in the expansion, we may also impose these boundary conditions on solutions of $N(\calL) u = 0$.  Then $(\calL, \calD)$ is Fredholm if and only if there exist no tempered solutions of $N(A)u = 0$ with $u$ 
satisfying the boundary conditions corresponding to $\calD$. 

To prove this last statement, it suffices to construct a parametrix $G$ in the pseudodifferential edge calculus which has the 
property that $G: L^2(M) \to \calD$ and such that $\calL G - \mbox{Id}$ is a compact operator. The construction is very similar 
to the one sketched above; the main difference is that one needs to use the inverse for $N(\calL)$ which satisfies the boundary 
conditions imposed by $\calD$. We shall provide more details about this for the specific case of the Friedrichs extension for 
the operator $P$ in the next section. It is necessary to analyze $P$ in this way because as we shall show, for this particular operator,
there is no weight $\delta$ such that $N(P): \rho^\delta H^2_{\mathrm{e}} \to \rho^{\delta-2}L^2$ is invertible: it is injective 
when $\delta >1$, and surjective when $\delta < 1$ (and nonindicial).  Nonetheless, the Friedrichs extension does turn out to be Fredholm, and in fact an isomorphism.

\subsection*{Incomplete iterated edge operators}\label{depth2}
The final case to consider is when $(M,g)$ is an incomplete iterated edge space of restricted depth $2$ 
(which means that the depth $2$ stratum consists only of  isolated points). Hence if $q$ is any point in
this depth $2$ stratum, then $q$ has a neighbourhood identified with the truncated cone $C_{0,1}(N)$ 
where $N$ is a compact space with conic (or more generally, incomplete edge) singularities. In our 
setting $N$ has only conic singularities, so for simplicity we assume that this is the case. The results we need do
not appear elsewhere in the literature yet, so we provide more details of the proofs than we did above
for conic or incomplete edge singularities. 

Thus let $(M,g)$ be as above and $\calL$ a generalized Laplacian on $M$. Rather than discussing Fredholm 
properties for $\calL$ acting between weighted Sobolev spaces with different weights as in \eqref{eq:mapedge},
we focus immediately on the analysis of the unbounded map
\begin{equation}
\calL: L^2(M) \longrightarrow L^2(M).
\label{eq:igny2}
\end{equation}
Our goal is to define certain domains on which $\calL$ is self-adjoint, and in particular, to characterize
elements in the Friedrichs domain.  We shall also prove a regularity theorem, showing that certain solutions 
of $\calL u = f$ with $f \in L^2 \cap \calA_{\phg}$ are also polyhomogeneous. Much of this is analysis is local, 
and since we have already studied this problem near the edges, we consider the behaviour of $\calL$ on 
sections supported near the depth $2$ conic points. 

Thus if $q$ is a depth $2$ point and $N$ its link, then writing $\calL$ as in (\ref{eq:mgL}) (now, of course, 
there are no $y$ variables) exhibits the induced operator $\calL_N$ on $N$. Since $N$ has conic singularities,
$\calL_N$ is a conic operator on a compact space, so we can bring to bear the theory we have developed for 
such operators.  In fact, in order to mimic that theory here, we would like to consider $\calL_N$ as
a self-adjoint operator, and for that we must choose a domain $\calD_N$ such that $(\calL_N,\calD_N)$
is self-adjoint on $L^2(N)$, and hence has discrete spectrum.  For example, we can let $\calD_N$ be
the Friedrichs domain. In any case, having made this choice, we define the indicial operator $I(\calL)$ 
as in \eqref{eq:indicop} ; the indicial roots relative to $\calD_N$ are the values $\gamma$ 
for which there exists $\varphi \in \calD_N$ such that $\calL(r^\gamma \varphi) = \calO(r^{\gamma-1})$.

The next step is to define the function space $L^2_{\calD_N}(M)$; this consists of all functions $u \in L^2$ which,
near any depth $2$ point $q$, have the property that $u(r, \cdot) \in \calD_N$ for almost every $r \in (0,1)$, and in addition
\[
\int_0^1 ||u(r, \cdot)||_{\calD_N}^2 r^{n-1}\, dr < \infty.
\]
The Sobolev spaces $H^\ell_{\ie,\calD_N}$ are then defined as the intersection $H^\ell_\ie(M) \cap L^2_{\calD_N}(M)$.
Using the expansion \eqref{eq:expansion}  for elements of $\calD_N$ for each $r$, and the continuous dependence
of this expansion on the function $u(r,\cdot) \in \calD_N$, we see that 
if $u \in L^2_{\calD_N}$, then
\begin{equation}
u = \sum c_j(r) s^{\gamma_j} \phi_j(\theta) + w, \qquad \mbox{where}\qquad 
w \in L^2((0,1), r^{n-1}\, dr; s^2 H^2_b(N)),
\label{eq:expedges}
\end{equation}
and each $c_j \in L^2(r^{n-1}\, dr)$ as well. Here the $\gamma_j$ are indicial roots lying in the critical interval
for $\calL_N$ along the edge and $\sum c_j(r) s^{\gamma_j}\phi_j(\theta)$ lies in $\calD_N$ for almost every $r$.

We shall suppose for simplicity that $\calL = -\del_r^2 - \frac{n-1}{r}\del_r + \frac{1}{r^2}\calL_N + \frac{1}{r}E$, 
where the remainder term $E$ has the special form
\begin{equation}\label{eq:exprrem}
E = f_1(r)\calL_N + f_2(r) r\del_r + f_3(r);
\end{equation}
the matrix-valued functions $f_i$ are assumed to be smooth up to $r=0$.  
This may seem restrictive, but is satisfied by the operators $P$ and $L$ in our problem, and simplifies various
arguments below. 

We also assume that $\calD_N$ has the property that the coefficients in the expansion (\ref{eq:expedges}) of 
$u \in L^2_{\calD_N}$ along each edge satisfy certain vanishing conditions which correspond to a self-adjoint
Fredholm extension of $\calL$ along these edges, as explained in the previous subsection. Thus the
global definition of $L^2_{\calD_N}$ includes the imposition of these boundary conditions along all the edges.

We first prove an existence theorem.
\begin{Prop}
Let $(M,g)$, $\calL$ and $\calD_N$ satisfy all of the conditions above.  Then there is an operator $G$ such that
\[
G: L^2(M) \longrightarrow r^2 H^2_{\ie,\calD_N}(M)
\]
is bounded, and $\calL G - \mbox{Id}$ is compact on $L^2(M)$. 
\label{pr:fredsv}
\end{Prop}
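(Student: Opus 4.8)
The plan is to construct $G$ by a parametrix assembly, gluing together three kinds of pieces: an interior parametrix away from $\Sigma$, edge parametrices from Proposition~\ref{pr:regedge} near each singular edge, and a new depth-$2$ parametrix near each isolated depth-$2$ point $q$. The interior and edge contributions have already been dealt with in the previous subsections, so the heart of the matter is to build a bounded right-inverse-modulo-compact for $\calL$ acting on sections supported in a conic neighbourhood $C_{0,1}(N)$ of such a $q$. For this I would mimic the conic theory of Proposition~\ref{pr:conicreg}, but with the smooth compact link $N$ replaced by the singular link $(N,h)$ equipped with the chosen self-adjoint domain $\calD_N$. Concretely, first I would replace $\calL$ by $A = r\calL r$, an iterated edge $b$-type operator in the variable $r$, and introduce its indicial operator $I(A) = -(r\del_r)^2 - nr\del_r + \calL_N - (n-1)$, where now $\calL_N$ is understood as the self-adjoint operator $(\calL_N,\calD_N)$ on $L^2(N)$. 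Because $(\calL_N,\calD_N)$ has discrete spectrum (it is a self-adjoint extension of a conic operator on a compact space, so the Corollary after Proposition~\ref{pr:conicreg} applies), the indicial family $I_\zeta(A) = \calL_N + \zeta^2 + in\zeta - (n-1)$ is a holomorphic family of Fredholm operators on $L^2(N)$ with meromorphic inverse, whose poles sit at $\zeta = i\gamma$ for the indicial roots $\gamma$ relative to $\calD_N$. A Mellin synthesis $\frac{1}{2\pi}\int_{\operatorname{Im}\zeta = c} r^{-i\zeta} I_\zeta(A)^{-1}\,(\cdot)_M(\zeta)\,d\zeta$, for a weight $c$ in a pole-free strip, then produces a first parametrix $G_0$ for $I(A)$; transplanting via $G = r^{-1}G_0 r^{-1}$ and absorbing the lower-order remainder $E$ of the special form \eqref{eq:exprrem} by a Neumann/Volterra iteration in $r$ (here the smoothness of the $f_i$ up to $r=0$ and the schematic form $f_1\calL_N + f_2 r\del_r + f_3$ is exactly what makes the iteration converge and preserves the $\calD_N$-valued structure) upgrades $G_0$ to a parametrix for $A$ itself, hence for $\calL$.

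The second ingredient is mapping properties. I would show that this $G$ maps $L^2$ into $r^2 H^2_{\ie,\calD_N}(M)$: the $r^2$-gain and the two orders of $H_\ie$-regularity come from the two factors $r^{-1}$ together with the two orders of the $b$-type operator $I(A)$ on the $r$-line, exactly as in \eqref{eq:expedges}, while the fact that $G$ lands in $L^2_{\calD_N}$ — i.e. that $Gf(r,\cdot)$ lies in $\calD_N$ and satisfies the global edge boundary conditions for a.e.\ $r$ — is automatic because $I_\zeta(A)^{-1}$ maps into $\calD_N$ by construction, and $\calD_N$ was chosen precisely so that its elements satisfy those edge conditions. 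The choice of the Mellin contour is dictated by the requirement that no indicial root $\gamma$ (relative to $\calD_N$) lie on the line $\operatorname{Re}\gamma = -n/2 + 2$, so that $r^2 H^2$ is the natural target; this is the analogue of the nonindicial weight hypothesis in Propositions~\ref{pr:Fredconic}–\ref{pr:conicreg}. Finally $\calL G - \mathrm{Id}$ is a smoothing operator with kernel supported away from $\Sigma$ — this is where one patches the depth-$2$ parametrix to the edge parametrices of Proposition~\ref{pr:regedge} and the interior parametrix using a partition of unity, so that the error terms are compactly supported in the regular part and hence compact on $L^2$ by Rellich.

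The main obstacle I expect is twofold and both points are genuinely new relative to the conic and simple-edge cases treated above. First, one must verify that taking $\calL_N$ in the self-adjoint sense $(\calL_N,\calD_N)$ is compatible with the $b$-calculus Mellin machinery — i.e. that $I_\zeta(A)^{-1}$ really is a bounded family on $L^2(N)$ with the claimed meromorphic structure and polynomial bounds on vertical strips — since $\calL_N$ is not an honest elliptic operator on a closed manifold but only a self-adjoint extension on a singular space, so one needs the resolvent estimates for $(\calL_N,\calD_N)$ and its off-diagonal decay; these follow from the edge theory of Proposition~\ref{pr:regedge} applied on $N$, but the bookkeeping is delicate. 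Second, the error term $E$ couples the $r$-direction to the singular structure of $N$, so the remainder of the parametrix iteration must be controlled in the mixed space $L^2((0,1),r^{n-1}dr; s^2 H^2_b(N))$ uniformly, and one must check that each step of the iteration preserves the edge boundary conditions that define $L^2_{\calD_N}$ globally — this is exactly where the restricted depth-$2$ hypothesis (isolated depth-$2$ points, conic rather than general-edge links) is used to keep the combinatorics finite and the argument self-contained.
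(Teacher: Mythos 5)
Your proposal follows essentially the same route as the paper's proof: reduction to the depth-$2$ points after using the edge hypothesis on $\calD_N$ elsewhere, passage to $A = r\calL r$ and its indicial operator with $\calL_N$ understood as the self-adjoint extension $(\calL_N,\calD_N)$, Mellin synthesis of $I_\zeta(A)^{-1}$ along a pole-free horizontal line using the resolvent bounds, absorption of the remainder $rE'$ by a Neumann series on a small collar $r\leq r_0$ thanks to the special form \eqref{eq:exprrem}, and patching by cutoffs. The two difficulties you flag (uniform bounds for the indicial family on vertical lines, and control of the error term in the mixed space while preserving the $\calD_N$ boundary conditions) are exactly the points the paper addresses, so the argument is correct as proposed.
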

\begin{proof}
Using the assumption on $\calD_N$ along the edges, we can construct a parametrix for $\calL$ with compact
remainder for sections supported away from the depth $2$ points. Thus it suffices to produce a parametrix
near each such point. First rewrite $\calL u = f$ as $A v = h$ where $v = r^{-1}u$, $h = rf \in r L^2$.
A priori, $v = r^{-1}u \in r^{-1}L^2$, though in fact, by Proposition \ref{pr:nantes2}, $v \in r^{-1}H^2_\ie(M)$. 

Consider $A$ as a perturbation of its indicial operator at $r=0$, $A = I(A) + r E'$, where $E'=r^{-1}Er$ has the same
structure as $E$ in \eqref{eq:exprrem}.  It suffices to construct an inverse for the first term $I(A)$.  
To solve $I(A) v = h$, take the Mellin transform:
\[
I_\zeta(A)\, v_M = h_M.
\]
Since $\calL_N : \calD_N \to L^2(N)$ is self-adjoint with discrete spectrum, the indicial operator  $I_\zeta(A) = 
\calL_N +\zeta^2 + in\zeta - (n-1) : \calD_N \to L^2(N)$ is invertible for every $\zeta$ outside of a discrete set.
The inverse $I_\zeta(A)^{-1}$, which is (up to a change of variables in $\zeta$) the resolvent of $\calL_N$, is a family 
of $b$-pseudodifferential operators which is meromorphic in $\zeta$ with poles at the points $\zeta = i\gamma$ 
where $\gamma$ is an indicial root of $A$; furthermore, for each $\zeta$ in the
regular set, $I_\zeta(A)^{-1}$ maps $L^2(N)$ to $\calD_N$.  

All but a discrete set of horizontal lines $\mbox{Im}\, \zeta = \mu$ are pole-free.  Along any line $\mathrm{Im}\,\zeta = \mu$
away from these exceptional heights,  we have the two estimates:
\begin{equation}
|| I_\zeta(A)^{-1} ||_{L^2(N) \to L^2(N)}  \leq C (1+|\zeta|)^{-2}, \qquad 
|| I_\zeta(A)^{-1}||_{L^2(N) \to \calD_N}  \leq C,
\label{resests}
\end{equation}
where $C$ is bounded so long as the line of integration remains bounded away from any horizontal line containing a pole.

Along any pole-free line, define
\[
\calG_\mu :=  \int_{\mbox{Im}\, \zeta = \mu} r^{-i\zeta} I_{\zeta}(A)^{-1}\, d\zeta.
\]
The Plancherel formula for the Mellin transform, the fact that this transform exchanges $r\del r$ with $-i\zeta$,
and the two estimates from \eqref{resests}, show directly that
\[
\calG_\mu: L^2(\RR^+, r^{-1-2\mu}dr; L^2(N))  \longrightarrow L^2(\RR^+, r^{-1-2\mu}dr ; \calD_N) \cap H^2_b(\RR^+, 
r^{-1-2\mu}dr ; L^2(N))
\]
is a bounded mapping. The space $H^2_b$ on the right consists of functions $u$ for which $u, r\del_r u$ and $ (r\del_r)^2 u$ 
all lie in $L^2$ with respect to the given measure.  Note also that by the meromorphy of $I_\zeta(A)^{-1}$, $\calG_\mu$ is locally 
constant in $\mu$, so long as we stay away from lines containing poles. We thus obtain a family of inverses for $I(A)$ acting on 
various weighted function spaces. In particular, $I(A) \circ \calG_{-n/2}$ is the identity on $L^2(\RR^+,r^{n-1}dr; L^2(N))$. 

To conclude, first observe that $A \circ \calG_{-n/2} = I - K$, where $K = - r E' \calG_\mu$. By \eqref{eq:exprrem}, this error term $K$ is clearly bounded on $L^2(\RR^+, r^{n-1}dr; L^2(N))$, and if we restrict to a region $r \leq r_0 << 1$,
$K$ has small norm on this space. Choosing $r_0$ small, we can invert $I - K$ by a Neumann series and obtain an
exact right inverse $G'$ for $A$ on $L^2([0,r_0], r^{n-1}dr; L^2(N))$, and hence corresponding right inverse $G^{(1)}=rG'r$ for $\calL$.

Now choose functions $\chi_1(r)$ and $\tilde{\chi}_1(r)$, where $\chi_1(r)$ equals $1$ in $r \leq r_0/2$ and vanishes for 
$r \geq 3r_0/4$ while $\tilde{\chi}_1 = 1$ on the support of $\chi_1$ and vanishes when $r \geq r_0$; define $\chi_2(r) = 1 - \chi_1(r)$ 
and choose $\tilde{\chi}_2$ which equals $1$ on the support of $\chi_2$ and vanishes in $r \leq r_0/4$. Finally, if $G^{(2)}$ is
a parametrix for $\calL$ with compact remainder (we could even choose it to be an exact inverse) on the region $r \geq r_0/2$,
then we set 
\[
G = \tilde{\chi}_1 G^{(1)} \chi_1 + \tilde{\chi}_2 G^{(2)} \chi_2.
\]
A standard calculation and argument, using the boundedness and regularizing properties of the $G^{(j)}$, shows that
$\calL G = I - \mathcal{K}$, where $\mathcal{K}$ is compact on $L^2(M)$. 
\end{proof}
This proof is precisely the same as in the conic case, but we have had to impose conditions on $\calD_N$ to ensure
good control of this operator near the edges. 

We now turn to regularity of solutions. 
\begin{Prop}
Assume all the same conditions on $(M,g)$, $\calL$ and $\calD_N$. Then any solution $u \in L^2_{\calD_N}(M)$ of $\calL u = f$ 
with $f \in L^2(M)$ has an expansion as $r \to 0$ of the form
\[
u = \sum_{\gamma_j \in (-n/2, -n/2 + 2]} c_j r^{\gamma_j} \phi_j(z) + w, \qquad w \in r^2 H^2_{\ie,\calD_N}(M), 
\]
where each $\phi_j(z)$ is an eigenfunction for $(\calL_N,\calD_N)$ associated to the eigenvalue corresponding to $\gamma_j$. 

Finally, if $f \in L^2(M) \cap \calA_{\phg}$ and $u$ is polyhomogeneous near the singular edges, then $u$ is also polyhomogeneous 
near the singular vertices, i.e. $u \in L^2 \cap \calA_{\phg}$.
\label{pr:polysv}
\end{Prop}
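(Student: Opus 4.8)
The proof runs parallel to the conic case treated above (cf.\ Proposition~\ref{pr:conicreg} and the derivation of the expansion~\eqref{eq:expansion}), the new feature being that the link $N$ is itself a conic space, so that $\calL_N$ must be replaced throughout by the self-adjoint operator $(\calL_N,\calD_N)$ on $L^2(N)$. First I would prove the radial expansion. As in the proof of Proposition~\ref{pr:fredsv}, set $v=r^{-1}u$ and $h=rf$, so that $Av=h$ with $A=r\calL r=I(A)+rE'$; by Proposition~\ref{pr:nantes2} and the hypothesis $u\in L^2_{\calD_N}(M)$ one has $v\in r^{-1}H^2_{\ie}(M)$ and $v(r,\cdot)\in\calD_N$ for almost every $r$. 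Since $(\calL_N,\calD_N)$ is self-adjoint with discrete spectrum, the indicial family $I_\zeta(A)=\calL_N+\zeta^2+in\zeta-(n-1)\colon\calD_N\to L^2(N)$ is an invertible holomorphic family away from the discrete set $\{\zeta=i\gamma:\gamma\ \text{an indicial root of }A\}$, with $I_\zeta(A)^{-1}$ meromorphic, mapping $L^2(N)$ into $\calD_N$, and obeying the bounds~\eqref{resests}.

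Next I would take the Mellin transform of $Av=h$ in $r$, write $I_\zeta(A)v_M=h_M-(rE'v)_M$, and exploit the extra factor of $r$ in $rE'$: since $E'$ has the special form~\eqref{eq:exprrem} and $\calL_N\colon\calD_N\to L^2(N)$ is continuous in the graph norm, the term $rE'v$ lies one $L^2$-weight better than $v$. Solving for $v_M$ and shifting the contour of the inverse Mellin transform from the line fixed by the a priori weight of $v$ up to the line corresponding to weight $r^2$, each indicial root of $\calL$ in the critical interval $(-n/2,-n/2+2]$ that is crossed contributes a residue $c_j\,r^{\gamma_j}\phi_j(z)$ (plus a $\log r$ term if the root is multiple, suppressed as elsewhere), where $\phi_j$ is the associated eigenfunction of $(\calL_N,\calD_N)$; the remaining contour integral, controlled by the second estimate of~\eqref{resests}, is the remainder $w\in r^2H^2_{\ie,\calD_N}(M)$. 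The feedback term $rE'v$ is absorbed by bootstrapping the weight of $v$, and the iteration terminates because $(\calL_N,\calD_N)$ has discrete spectrum, hence only finitely many indicial roots in any bounded region of $\CC$.

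For the polyhomogeneity statement I would bootstrap this expansion to all orders. Each eigenfunction $\phi_j\in\calD_N$ is polyhomogeneous on $N$ by the conic regularity theory of Proposition~\ref{pr:conicreg} applied to $\calL_N$. Writing $u=\sum_{\gamma_j}c_j r^{\gamma_j}\phi_j(z)+w$ and using $I(\calL)(r^{\gamma_j}\phi_j)=0$, the remainder satisfies $\calL w=f-\sum_j c_j\,(\calL-I(\calL))(r^{\gamma_j}\phi_j)$; because the coefficients of $E'$ are smooth down to $r=0$ and $f$ and the $\phi_j$ are polyhomogeneous, the right-hand side lies in $L^2\cap\calA_{\phg}$ with improved weight. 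Re-running the previous step on $w$ produces further terms $c\,r^{\gamma}\psi(z)$ — with $\psi\in\calD_N$ solving an inhomogeneous indicial equation with polyhomogeneous right-hand side, hence itself polyhomogeneous on $N$ — plus a remainder vanishing to one higher order. Iterating and forming the asymptotic sum (legitimate since only finitely many exponents lie below any given level, these exponents being indicial roots of $\calL$ or positive-integer shifts of these and of the exponents of $f$) yields a full expansion of $u$ in powers of $r$ with polyhomogeneous-on-$N$ coefficients. Combined with the hypothesis that $u$ is polyhomogeneous near the singular edges — i.e.\ in the $s$ variable, uniformly away from the vertex — this produces the double expansion at the corner $r=s=0$ required for $u\in\calA_{\phg}$ near the vertex.

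The step I expect to be the main obstacle is the bookkeeping that marries the radial ($r$-)expansion to the edge ($s$-)expansion near the corner $r=s=0$: one must verify that the coefficient functions $\phi_j$, $\psi$ and the high-order remainders have $s$-expansions compatible with the already-known edge behaviour of $u$, so that the two combine into a genuine product-type polyhomogeneous expansion at the depth-$2$ point. Much of this is forced by the definition of $L^2_{\calD_N}(M)$, which builds in $\calD_N$-membership — hence the correct edge boundary conditions — for almost every $r$; nonetheless, pinning down the uniformity of these $s$-expansions, and thereby the product structure at the corner, is the technically delicate point that is genuinely new compared with the depth-$1$ cases.
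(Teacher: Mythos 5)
Your proposal follows essentially the same route as the paper's proof: pass to $Av=h$ with $v=r^{-1}u$, take the Mellin transform, use the meromorphy of $I_\zeta(A)^{-1}$ as a family of bounded operators $L^2(N)\to\calD_N$ to continue $v_M$ past the indicial roots in the critical strip (the residues giving the terms $c_j r^{\gamma_j}\phi_j$), and iterate to absorb the error term $rE'v$. The only cosmetic difference is in the polyhomogeneity step, which the paper carries out entirely on the Mellin side---iterating the meromorphic extension and invoking the characterization of Mellin transforms of polyhomogeneous functions, which also settles your worry about the corner $r=s=0$ since everything is done with $\calD_N$-valued holomorphic families and the residues are eigenfunctions, hence polyhomogeneous on $N$---whereas you subtract leading terms in physical space; the two bootstraps are equivalent.
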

\begin{proof}
Note first that we only need to focus on a neighbourhood of $r=0$ (i.e. the depth $2$ points), since the first assertion
only concerns these regions anyway and for the second assertion the assumption on $\calD_N$ already 
guarantees that $u$ is polyhomogeneous near all edges. 

As above, write $\calL u = f$ as the equivalent equation $A v = h$, so $I(A) v = h - r E' v$; by the remarks
above, and the assumption \eqref{eq:exprrem} on $E$, the right hand side lies in $L^2$. Passing
to the Mellin transform, $(h-rEv)_M(\zeta, \cdot) \in L^2(N)$ and hence 
\begin{equation}
v_M(\zeta,\cdot) = I_\zeta(A)^{-1}(h-rEv)_M(\zeta,\cdot)
\label{eq:Mel}
\end{equation}
makes sense and takes values in $\calD_N$ for each $\zeta$. 

A priori, $v_M(\zeta,z)$ is holomorphic in the lower half-plane $\Im \zeta < -n/2 - 1$ (since $v \in r^{-1} L^2$)
with values in $\calD_N$. On the other hand, $(h - r Ev)_M$ is holomorphic in $\Im \zeta < -n/2$ with values in $L^2(N)$. 
Furthermore, $I_\zeta (A)^{-1}$ is meromorphic in all of $\CC$, with poles at the indicial roots of $A$, and takes values in
the space of bounded operators from $L^2(N)$ to $\calD_N$.  This proves that $v_M(\zeta,z)$ extends meromorphically 
to $\Im \zeta < -n/2$, with at most a finite number of poles in the strip $\Im \zeta \in [-n/2 - 1, -n/2)$, and with values
in $\calD_N$. The residues at these poles lie in the eigenspaces of $\calL_N$ corresponding to those indicial roots, hence 
are polyhomogeneous on $N$. Taking the inverse Mellin transform shows that $v = \sum r^{\gamma_j}\phi_j(z) + w$ where 
$w \in L^2(r^{n-1}dr; \calD_N)$. One further iteration of this argument shows that $v$ has the decomposition asserted 
in the statement of this Proposition.

The next step is to show that if $f$ (and hence $h$) is conormal, then so is $v$ and hence $u$ as well. Recall from
Proposition \ref{pr:nantes2} that $u \in H^\ell_{\ie}$ for any $\ell \geq 0$. Conormality in this setting is the statement 
that $u$ has stable regularity with respect to $r \del_r, s\del_s $ and $\del_\theta$, and the only one of these vector
fields which does not lie in $\calV_{\ie}$ is $r\del_r$.  Therefore we need only consider the behaviour of $u$ under
repeated action of $r \del_r$. To study this, note that since the Mellin transform intertwines $r\del_r$ with 
multiplication by $-i\zeta$, then $(r \del_r)^\ell w \in L^2$ for all $\ell \geq 0$ if and only if $|w_M(\zeta,\cdot)| 
\leq C_\ell (1 + |\Re \zeta|)^{-\ell}$ for all $\ell \geq 0$. Therefore, multiplying (\ref{eq:Mel}) by $\zeta^2$ and noting
that $\zeta^2 I_\zeta(A)^{-1}$ is bounded as $|\Re \zeta| \to \infty$, we obtain $(r \del_r)^2 v \in r^{-1}L^2$. 
Now iterate this step, eventually showing that $(r \del_r)^\ell v \in L^2$ for any $\ell \geq 0$. This proves 
conormality.

The final step is to show that $v$ (and hence u) is actually polyhomogeneous if $f$ is. For this
we return to the first argument. Write $v_M$ again as in (\ref{eq:Mel}) and use that $(r Ev)_M$
now extends to be holomorphic and rapidly decaying as $|\Re \zeta| \to \infty$ in $\Im \zeta < -n/2$,
while $h_M$ and $I_\zeta(A)^{-1}$ both extend meromorphically. This gives a meromorphic extension
of $v_M$ to $\Im \zeta < -n/2$ with rapid decay in horizontal directions. Iterating this eventually yields
that $v_M$ is meromorphic in the entire complex plane and rapidly decaying along horizontal lines.
The final observation is that this is precisely the characterization of Mellin transforms of polyhomogeneous
functions. Indeed, one can recover $v$ by taking the inverse Mellin transform initially along any line
$\Im \zeta = c < -n/2 - 1$, then recovering the full expansion of $v$ by moving the contour up; the
residues at the poles correspond to the terms $r^{\gamma} \phi(z)$ in the expansion.
\end{proof}

\begin{Rem}
This type of proof was developed and used as a crucial part of the arguments in \cite{Ma-edge}, but 
this use of Mellin transform and its relationship with polyhomogeneity goes back further to \cite{Mel-Men}.
\end{Rem}

\section{Proof of Theorem \ref{thm:gauge}}\label{sec:proof}
We now turn to the proof of the first main theorem, which implies that if $\dot{g}$ is an infinitesimal deformation 
in standard form with no component corresponding to change of dihedral angles, then there is a unique equivalent 
infinitesimal deformation $h$ which is in Bianchi gauge and such that $h$ is polyhomogeneous. Recall from \eqref{eq:tmeq}
at the end of section \ref{sec:gauge} that $h = \dot{g} - 2 \delta^* \eta$ where $P \eta = B \dot{g}$. Thus we must study 
the equation $P\eta = f$, which we do using the methods and results of the previous section. This requires various 
computations of the indicial roots of $P$ on cone-surfaces and conifolds and of its indicial and normal operator. 
We carry these out first, then complete the proof of Theorem \ref{thm:gauge} at the end of this section.

Let us denote by $\Delta_j$ the Hodge Laplacian on $j$-forms. Because of the relationship of $P = \na \n - \Ric$ with
$\Delta_1 = \na \n + \Ric$,  a certain amount of the analysis below is most neatly phrased in terms of this Laplacian.

\subsubsection*{Analysis of the Hodge Laplacian on cone-surfaces}
The behaviour of $\Delta_0$ is not hard to understand. The following result is classical, but see \cite{Gil-Mend} for 
extensive discussion.
\begin{Prop}\label{pr:Laph0}
Let $(N,h)$ be a compact (possibly with non-constant curvature) cone-surface with all cone angles less than $2\pi$. Then the
corresponding Laplacian $\Delta_0$ has a one-parameter family of self-adjoint extensions, described in the proof 
below. The Friedrichs extension, with domain $\calD_{\Fr}^0$, is the only one which admits the constant function 
$u \equiv 1$ in its nullspace.
\end{Prop}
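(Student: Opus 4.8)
This is a direct application, near each cone point of $N$, of the conic operator theory recalled earlier for conic operators; the argument is classical (cf.\ \cite{Gil-Mend}). The plan is: (i) compute the indicial roots of $\Delta_0$ at a cone point; (ii) use the hypothesis $\alpha<2\pi$ to see that the boundary space $\calD_{\max}(\Delta_0)/\calD_{\min}(\Delta_0)$ is two-dimensional per cone point, spanned by the classes of cutoffs of $1$ and $\log r$; (iii) describe the self-adjoint extensions as Lagrangians for the Green's pairing, singling out the one-parameter family claimed; and (iv) identify the Friedrichs extension inside this family and check which of its members contain the constant function.

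For (i), localize near a cone point $q_i$, where $(N,h)$ is quasi-isometric to the flat cone over $\Sph^1_{\alpha_i}$ and $\Delta_0$ is a conic operator with link operator $\calL_N=\Delta_{\Sph^1_{\alpha_i}}$, whose spectrum is $\{(2\pi k/\alpha_i)^2:k\ge 0\}$. By \eqref{eq:nantes} with $n=2$, the indicial roots $\gamma$ of $\Delta_0$ are precisely those with $\gamma^2$ in this spectrum, i.e.\ $\gamma=\pm 2\pi k/\alpha_i$, $k\ge 0$, and $\gamma=0$ is a double root, so the two model solutions there are $1$ and $\log r$. For (ii): since $\alpha_i<2\pi$ forces $2\pi/\alpha_i>1$, the critical interval $(-1,1]$ contains only the double root $\gamma=0$, so by \eqref{eq:expansion} every $u\in\calD_{\max}(\Delta_0)$ has near $q_i$ an expansion $u=c_1^{(i)}+c_2^{(i)}\log r+w_i$ with $w_i\in r^2H^2_b$, and $u\in\calD_{\min}(\Delta_0)$ iff all $c_1^{(i)}=c_2^{(i)}=0$. (Both $1$ and $\log r$ lie in $L^2(r\,dr)$ near $r=0$ and have $\Delta_0$-image in $L^2$, since $\Delta_0 1=0$ and $\Delta_0\log r$ stays bounded as $r\to 0$, so they genuinely survive in the quotient.)

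For (iii), $\calD_{\max}(\Delta_0)/\calD_{\min}(\Delta_0)$ is a finite-dimensional symplectic space for the pairing coming from Green's formula, whose restriction to the data at $q_i$ is a nonzero multiple of $c_1^{(i)}(u)c_2^{(i)}(v)-c_2^{(i)}(u)c_1^{(i)}(v)$ — nondegeneracy here is exactly the statement that the Wronskian of the model solutions $1$ and $\log r$ is nonzero, which is in turn the fact that $\gamma=0$ is a double root. Self-adjoint extensions correspond to Lagrangian subspaces; imposing $c_2^{(i)}=t\,c_1^{(i)}$ at every $q_i$ for a common $t\in\RR\cup\{\infty\}$ defines a line, hence a Lagrangian, in each factor, and produces the advertised one-parameter family, with $t=\infty$ the extension admitting only the $\log r$ terms. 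For (iv), use that $\Delta_0=\n^*\n=d^*d$ on functions, so by the Corollary following Proposition~\ref{thm:stokes}, $\calD_{\Fr}^0=\{u\in L^2:\n u\in L^2,\ \Delta_0 u\in L^2\}$; since $|d\log r|^2=r^{-2}$ is not integrable against $r\,dr$ while $d w_i\in L^2$ for $w_i\in r^2H^2_b$, the condition $\n u\in L^2$ is equivalent to $c_2^{(i)}(u)=0$ for all $i$, which identifies $\calD_{\Fr}^0$ with the $t=0$ member. Finally, $u\equiv 1$ lies in $L^2(N)$ with $\n 1=0$ and $\Delta_0 1=0$, so $1\in\calD_{\Fr}^0$ and is in its nullspace; for any other member of the family ($t\neq 0$) the boundary data of $1$, namely $c_1^{(i)}=1$ and $c_2^{(i)}=0$ at each $q_i$, violates $c_2^{(i)}=t\,c_1^{(i)}$, so $1$ is not even in its domain, let alone its nullspace.

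The real content is the bookkeeping in steps (iii)--(iv): verifying nondegeneracy of the boundary pairing (the Wronskian computation, tied to the double indicial root at $\gamma=0$) and the elementary $\e$-shell estimates — of the kind in the proof sketch of Proposition~\ref{thm:stokes} — showing that $\n u\in L^2$ really annihilates the $\log r$ coefficients. Everything else is a transcription of the conic machinery already recalled, and I would not expect any serious obstacle.
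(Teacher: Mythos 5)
Your proposal is correct and follows essentially the same route as the paper's proof: compute the indicial roots $n\gamma$ at each cone point, observe that $\gamma=2\pi/\alpha>1$ leaves only the double root $0$ in the critical interval $(-1,1]$ with model solutions $1$ and $\log r$, parametrize the resulting one-parameter family of extensions by the ratio of the two coefficients, and identify the Friedrichs extension as the one with no $\log r$ term. You merely make explicit two points the paper leaves implicit — the Lagrangian/boundary-pairing description of the extensions (delegated there to \cite{Gil-Mend}) and the direct check that $u\equiv 1$ lies in the domain only when the $\log$ coefficient is unconstrained to vanish — so there is nothing to flag.
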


\begin{proof}
Near any cone point $q$, using the radial and angular variables $(r,\theta)$,  $I(\Delta) = r^{-2}\left(-(r\del_r)^2 - \del_\theta^2\right)$, so the indicial roots are all of the form $n\gamma$, $n \in \Z$ (with $0$ a double root), where $\gamma  = 2\pi /\alpha$.
 The function $r^\mu$ lies in $L^2(r dr d\theta)$ if and only if $\mu > -1$, so the only indicial roots affecting self-adjointness 
are those in the interval $(-1,1]$. Since $\gamma > 1$, this excludes every indicial root except the double root $0$. 
This root has corresponding solutions $r^0 = 1$ and $\log r$. Each of the corresponding domains of self-adjointness is 
then given by the condition $\calD(\omega) = \{u = a + b\log r + v, v \in r^2 H^2_b\}$ where $(a+ib)/|a+ib| = e^{i\omega}$.
 In particular, $\calD_{0,\Fr} = \calD(0)$, i.e.\ corresponds to the case where no log term is allowed.
\end{proof}

Because $\dim N = 2$, the Hodge star intertwines the quadratic forms associated to $\Delta_0$ and $\Delta_2$, so this result 
characterizes the Friedrichs extension of $\Delta_2$ as well.

The story for $\Delta_1$ is slightly more complicated.  Rather than characterizing all possible self-adjoint extensions, we 
focus on the two with particular geometric interest: the Friedrichs extension, with domain $\calD_{1,\Fr} = 
\{u \in L^2(N): \  \n u, \na \n u \in L^2\}$, and the Dirichlet-Neumann extension, with domain $\calD_{1,\DN} = 
\{u \in L^2 : \ du,\ \delta u,\ d \delta u,\ \delta d u \in L^2\}$ (recall from Proposition \ref{thm:stokes} that $\n_{\min} = \n_{\max}$, $d_{\min} = d_{\max}$, and $\delta_{\min} = \delta_{\max}$).  The latter was introduced by Gaffney \cite{Gaffney}. 
The naturality of the Friedrichs extension is obvious, but the Dirichlet-Neumann extension is also important when considering forms of different degrees, since it is the 
one that is intertwined by $d$. Fortunately, in the 
present setting, these extensions agree.
\begin{Prop}
Let $(N,h)$ be a closed (possibly variable-curvature) cone-surface with cone angles less than $2\pi$. Then $\calD_{1,\Fr}
=\calD_{1,DN}$, and we denote this domain simply by $\calD_1$. Furthermore, every $u \in \calD_{1}$ also 
satisfies $\n d u \in L^2$.
\label{pr:Laph1}
\end{Prop}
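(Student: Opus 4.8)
The plan is to reduce the statement to a local analysis near each cone point of $N$, compute the indicial roots of $\Delta_1$ there, and then show that the Friedrichs condition and the Dirichlet--Neumann conditions cut out the same subspace of $\calD_{\max}(\Delta_1)$. Fix a cone point with angle $\alpha\in(0,2\pi)$ and work in the model $dr^2+\sn_\K^2 r\,d\theta^2$, $\theta\in\Sph^1_\alpha$, whose leading part at $r=0$ is the flat cone $dr^2+r^2\,d\theta^2$. Writing a $1$-form in the orthonormal coframe $e^1=dr$, $e^2=\sn_\K r\,d\theta$ as $u=u_1e^1+u_2e^2$, a direct computation of the connection shows that in the complex combination $w=u_1+iu_2$ the operator $\Delta_1=\na\n+\Ric$ becomes the scalar conic Laplacian with $\del_\theta$ replaced by $\del_\theta+i$ (and $\bar w=u_1-iu_2$ gives $\del_\theta-i$), modulo terms that are of lower order near $r=0$. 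Fourier expanding in $\theta\in\Sph^1_\alpha$, whose frequencies are $n\gamma$ with $\gamma=2\pi/\alpha$, the indicial roots of $\Delta_1$ are therefore the numbers $\pm|n\gamma-1|$, $n\in\Z$. Since the cone angle is less than $2\pi$ we have $\gamma>1$, and the only indicial roots in the critical interval $(-1,1]$ are the value $1$ (with model eigensections spanned by $r\,dr$ and $*(r\,dr)\sim r^2\,d\theta$) and, when $\pi<\alpha<2\pi$ so that $1<\gamma<2$, the symplectic pair $\gamma-1$ and $1-\gamma\in(-1,0)$; moreover $0$ is never an indicial root of $\Delta_1$, so no logarithmic terms arise in the critical range.

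The next step is to catalogue, for each critical indicial solution, whether applying $d$ and $\delta$ keeps it in $L^2$. The ``geometric'' solutions $r\,dr=\tfrac12 d(r^2)$, $*(r\,dr)$, and (when $\alpha>\pi$) the closed and coclosed forms $dw$ and $d\bar w$, where $w$ is a local holomorphic coordinate with $|w|\sim r^\gamma$, all have $d(\cdot)$ and $\delta(\cdot)$ equal either to $0$ or to a bounded multiple of $dV$, hence in $L^2$; one also checks that $\n(\cdot)$ is bounded near the cone point and $\na\n(\cdot)=-\Ric(\cdot)\in L^2$, so each of them lies in both $\calD_{1,\Fr}$ and $\calD_{1,DN}$. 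The remaining solution, of leading exponent $1-\gamma$, is neither closed nor coclosed, and $d$ of it is a $2$-form of size $\calO(r^{-\gamma})$; the integral $\int r^{-2\gamma}\,r\,dr$ converges if and only if $\gamma<1$, so for $\alpha<2\pi$ this is \emph{not} in $L^2$. This is precisely where the angle hypothesis is used.

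With the catalogue in hand, I would conclude as follows. By the discussion following Proposition~\ref{pr:conicreg} the Friedrichs domain of $\na\n+\calR$ is characterized by the vanishing of the coefficients of all indicial solutions of exponent $\le 0$ in the expansion of $u\in\calD_{\max}$, which for $\Delta_1$ here means exactly the $\calO(r^{1-\gamma})$ terms. On the other hand $\calD_{1,DN}$ requires in particular $du\in L^2$, which by the catalogue forces the vanishing of those same coefficients and nothing more. Hence $u\in\calD_{1,\Fr}\iff u\in\calD_{1,DN}$, and we denote this common domain by $\calD_1$. (Equivalently, the catalogue gives directly $\calD_{1,DN}\subseteq\calD_{1,\Fr}$ --- one has $\na\n u=\Delta_1 u-\Ric u=\delta du+d\delta u-\Ric u\in L^2$, and the coefficient conditions just described give $\n u\in L^2$ --- and since both are self-adjoint extensions of $\Delta_1$ acting initially on $\calC^\infty_0(N\setminus\Sigma)$, maximality of self-adjoint operators forces equality.)

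For the final assertion, recall that on the surface $N$ the volume form $dV$ is parallel and every $2$-form is a function times $dV$, so for any $1$-form $u$ one has $|\n du|=|d(*du)|=|\delta du|$ pointwise; thus $\n du\in L^2\iff\delta du\in L^2$. Now $\delta du=\Delta_1 u-d\delta u$: the first term lies in $L^2$ because $u\in\calD_1\subseteq\calD_{\max}(\Delta_1)$, and the second lies in $L^2$ because the expansion of $u\in\calD_1$, together with the computations above, shows that near each cone point $\delta u$ is a constant plus an element of $r^2H^2_b$. Hence $\n du\in L^2$. The step I expect to be the main obstacle is the cataloguing and matching in the two middle paragraphs: one must keep accurate track of which critical indicial solutions are benign and which are genuinely singular, of the exact $L^2$ thresholds (which is where the bound $\alpha<2\pi$, i.e.\ $\gamma>1$, enters), of the fact that the four $L^2$-conditions defining $\calD_{1,DN}$ carve out exactly the Friedrichs subspace of $\calD_{\max}(\Delta_1)$, and of the possible resonant logarithmic terms in the expansions, which one must verify occur only at exponents above the critical interval and so do not affect the coincidence of the two domains.
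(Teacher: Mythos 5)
Your proposal is correct and follows essentially the same route as the paper: an indicial computation at each cone point (your complex combination $w=u_1+iu_2$ is just the paper's basis $\eta_n^{\pm\pm}$, and your $dw$, $d\bar w$ are its closed solutions $\eta^{-+}_{+1}$, $\eta^{+-}_{-1}$), the observation that $d$ applied to the exponent-$(1-\gamma)$ solutions leaves $L^2$ precisely because $\gamma>1$, and the identification of the resulting coefficient conditions with the Friedrichs ones. One small inaccuracy worth fixing: for $\pi<\alpha<2\pi$ one has $\nabla(dw)\sim r^{\gamma-2}$, which is \emph{not} bounded near the cone point, but it does lie in $L^2$ since $\gamma>1$, which is all your argument actually needs.
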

\begin{proof} Using the local trivialization of the $1$-form bundle given by $dr$ and $rd\theta$, the indicial operator of 
$\Delta_1 = \na \n + \Ric$ is
\begin{equation*}
I(\Delta_1) =  r^{-2}\left(- (r\d_r)^2 - \d_\theta^2 +1\right)\, \begin{pmatrix}1 & 0 \\ 0 & 1\end{pmatrix}  + 
2r^{-2}\d_\theta\,  \begin{pmatrix}0&1\\ -1& 0\end{pmatrix}.
\end{equation*}
The set of indicial roots is thus $\Lambda(\Delta_1) = \{n\gamma \pm 1\ ;\ n\in \Z\}$; the corresponding solutions are of the form
\begin{equation}
\begin{split}
\eta_n^{++} =  r^{n\gamma+1} e^{in\gamma\theta} (dr-ird\theta),  \qquad \eta_n^{+-} = r^{-n\gamma-1} 
e^{in\gamma\theta} (dr-ird\theta), \\
\eta_n^{-+} =  r^{n\gamma-1} e^{in\gamma\theta} (dr+ird\theta), \qquad 
\eta_n^{--} = r^{-n\gamma+1} e^{in\gamma\theta} (dr+ird\theta). 
\end{split}
\label{eq:solindcs}
\end{equation}

Just as for functions, we must examine indicial roots lying in the interval $(-1,1]$. The only possible ones are $1-\gamma$ 
and $\gamma-1$ (and these lie in this interval only if the cone angle $\alpha$ is between $\pi$ and $2\pi$). Now observe that 
\[
d \eta^{++}_{-1} = d\overline{\eta^{--}_{+1}}  =  r^{-\gamma} e^{-i\gamma \theta} ( -i (2-\gamma) -i \gamma)  dr \wedge r d\theta \notin L^2, \qquad 
\mbox{but}\]
\[d\eta^{-+}_{+1} = d\overline{\eta^{+-}_{-1}}  =  r^{\gamma - 2 } e^{i\gamma \theta} (i \gamma  -i \gamma)  dr \wedge r d\theta = 0.
\]

If $\eta \in L^2$ and $\Delta_1 \eta \in L^2$, then 
\[
\eta = c^{++} \eta^{++}_{-1} + c^{--} \eta^{--}_{+1} + c^{-+} \eta^{-+}_{+1} + c^{+-} \eta^{+-}_{-1} + \tilde{\eta}, \qquad \tilde{\eta} \in r^2 H_b^2.
\]
Recall that $\eta \in \calD_{1,\Fr}$ if and only if $c^{++}=c^{--} = 0$, for this is what is needed to ensure that $\n \eta \in L^2$. 
Thus to prove that $\calD_{1,\Fr} = \calD_{1,\DN}$, we must show $c^{++}=c^{--} = 0$ whenever $\eta \in \calD_{1,\DN}$. 
But this follows immediately from the computation above since if $c^{++}$ or $c^{--} \neq 0$, then $d\eta \notin L^2$. 

To prove the last assertion, simply note that if $\eta \in \calD_1$, so $\eta = c^{-+} \eta^{-+}_{+1} + c^{+-}\eta^{+-}_{-1} + \tilde{\eta}$, then 
as we have already checked, $d\eta = d\tilde{\eta} \in r H^1_b$, so $\nabla d \eta \in L^2$, as claimed. 
\end{proof}

\begin{Prop} \label{prop:bashilb}
Let $(N,h)$ be a cone-surface with all cone angles less than $2\pi$.  Using the Friedrichs extension of the Hodge
Laplacian on forms of every degree, let $\{\psi_j\}$ be the orthonormal basis of $L^2(N)$ consisting of eigenfunctions 
of $\Delta_0$, with corresponding eigenvalues $\lambda_j$, so that $\{*\psi_j\}$ is an orthonormal basis of 
$L^2(N, \Lambda^2 T^*N)$ consisting of eigenforms of $\Delta_2$.  For each $j$ with $\lambda_j > 0$, define the $1$-forms
\[
\phi'_j = \frac{1}{\sqrt{\lambda_j}} d\psi_j, \qquad \phi_j'' = \frac{1}{\sqrt{\lambda_j}} *d\psi_j.
\]
Then $\{\phi_j', \phi_j''\}$ is an orthonormal basis of $L^2(N,T^*N)$ consisting of eigenforms for
$\Delta_1$. These satisfy $\delta \phi_j' = \sqrt{\lambda_j} \psi_j$, $d \phi_j'' = \sqrt{\lambda_j} *\psi_j$ and furthermore, 
\[
\n d \psi_j, \n d \phi_j', \n d \phi_j'' \in L^2.
\]
Finally, if the curvature $\K$ of $(N,h)$ is equal to $+1$, then $\mbox{spec}(\Delta_1) \subset (1,\infty)$. 
\end{Prop}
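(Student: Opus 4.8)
The final assertion can be approached as follows. By the orthonormal decomposition $\{\phi_j',\phi_j''\}_{\lambda_j>0}$ of $L^2(N,T^*N)$ just produced, $\mathrm{spec}(\Delta_1)$ is exactly the set of nonzero eigenvalues of $\Delta_0$ on $(N,h)$, so a Lichnerowicz bound for the Friedrichs Laplacian on functions would suffice; but it is cleaner to argue directly on $1$-forms, avoiding any Bochner computation with boundary terms. Since $\dim N=2$ and $\K=+1$ the Ricci tensor is $h$, so as an operator on $1$-forms $\Delta_1=\na\n+\mathrm{Id}$. The zeroth-order term $\Ric$ is bounded, hence changes no domains: by the corollary to Proposition~\ref{thm:stokes}, the Friedrichs domain of $\na\n$ is $\{\omega\in L^2(N):\n\omega,\ \na\n\omega\in L^2\}$, which is precisely the domain $\calD_1$ on which $\Delta_1$ is self-adjoint (this domain, and its coincidence with the Dirichlet--Neumann domain, is the content of Proposition~\ref{pr:Laph1}). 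On $\calD_1$ the $L^2$ Stokes theorem (Proposition~\ref{thm:stokes}, in the form $\n_{\min}=\n_{\max}$) gives $\langle\na\n\,\omega,\omega\rangle=\|\n\omega\|_{L^2}^2\ge 0$ for every $\omega\in\calD_1$.

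Consequently $\na\n$ is a nonnegative self-adjoint operator on $\calD_1$, so $\mathrm{spec}(\Delta_1)=1+\mathrm{spec}(\na\n)\subset[1,\infty)$. By the corollary to Proposition~\ref{pr:conicreg} the operator $\Delta_1$ on $\calD_1$ has discrete spectrum consisting of finite-multiplicity eigenvalues, so $\mathrm{spec}(\Delta_1)$ can fail to lie in $(1,\infty)$ only if $1$ is an eigenvalue, i.e.\ only if there is a nonzero $\omega\in\calD_1$ with $\na\n\,\omega=0$. By the identity above such an $\omega$ is parallel. But $(N,h)$ carries no nonzero parallel $1$-form: if $N$ has a cone point then the holonomy of $h$ around it is a rotation by $2\pi-\alpha_i\neq 0$, which fixes no nonzero covector, so no parallel section exists even in a punctured neighbourhood of that point; and if $N$ has no cone points it is the round sphere, which has no nonzero parallel $1$-form since $\chi(\Sph^2)\neq 0$ (equivalently $H^1(\Sph^2)=0$). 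Hence $1\notin\mathrm{spec}(\Delta_1)$, and $\mathrm{spec}(\Delta_1)\subset(1,\infty)$ as claimed.

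I do not expect a serious obstacle here: all the analytic content is already packaged in the orthonormal decomposition and in the Stokes theorem for conifolds. The one point needing (minor) care is the two appeals to Proposition~\ref{thm:stokes}---identifying $\calD_1$ with the Friedrichs domain of $\na\n$ and justifying $\langle\na\n\,\omega,\omega\rangle=\|\n\omega\|^2$ on it---together with the remark that adding the bounded term $\Ric$ changes nothing; after that the argument is purely soft. For the record, one could instead estimate $\lambda_1(\Delta_0)$ by running the Lichnerowicz--Bochner identity on $N$ with small geodesic disks around the cone points removed and letting their radii tend to $0$: the boundary integrals over $\partial B(q_i)$ then vanish because a Friedrichs eigenfunction behaves like $c_i+O(r^{2\pi/\alpha_i})$ with $2\pi/\alpha_i>1$, and this route even yields the sharp bound $\mathrm{spec}(\Delta_1)\subset[2,\infty)$; but the weaker statement above is all that is needed in the sequel.
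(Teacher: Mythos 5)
Your argument for the \emph{final} assertion is correct and is essentially the paper's: write $\Delta_1=\na\n+\Ric=\na\n+\mathrm{Id}$, use the $L^2$ Stokes theorem (Proposition~\ref{thm:stokes}) to integrate by parts on the Friedrichs domain and get $(\lambda-1)\|\phi\|^2=\|\n\phi\|^2$, then exclude $\lambda=1$ by ruling out parallel $1$-forms. The only difference is cosmetic: the paper excludes a parallel eigenform by noting that every element of $\calD_1$ decays at each conic point (its indicial exponents in the Friedrichs domain are $\ge \gamma-1>0$), whereas you use the nontriviality of the holonomy around a cone point (and non-flatness on the round sphere). Both work; your remark that the sharp bound $\lambda_1(\Delta_0)\ge 2$ would also do the job matches Proposition~\ref{pr:weiss}.

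The gap is that you prove only the last sentence of the proposition. You take the orthonormal decomposition $\{\phi_j',\phi_j''\}$ as ``just produced,'' but producing it is the actual content of the statement on a singular surface, and it is where the paper spends its effort. The formal algebra ($\Delta_1 d\psi_j=\lambda_j d\psi_j$, orthonormality, $\delta\phi_j'=\sqrt{\lambda_j}\psi_j$, completeness) is standard on a smooth closed surface, but here one must check that $d\psi_j$ and $*d\psi_j$ actually lie in the self-adjoint domain $\calD_1$ of $\Delta_1$ --- otherwise they are not eigenforms of the chosen extension and the spectral decomposition of $\Delta_1$ need not be the one you quote. The paper does this by verifying $d\phi,\ \delta\phi,\ d\delta\phi,\ \delta d\phi\in L^2$ for $\phi=\lambda^{-1/2}d\psi$ and invoking the identification $\calD_{1,\Fr}=\calD_{1,\DN}$ of Proposition~\ref{pr:Laph1}. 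Likewise the assertion $\n d\psi_j,\ \n d\phi_j',\ \n d\phi_j''\in L^2$ is not addressed at all in your proposal; it follows from the last clause of Proposition~\ref{pr:Laph1} ($\eta\in\calD_1\Rightarrow\n d\eta\in L^2$) once membership in $\calD_1$ is established, and it is used later in the paper, so it cannot be dropped. To make the proposal complete you need to add these domain verifications before the spectral argument.
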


\begin{proof}
This is well-known when $N$ is a smooth closed surface, and the formal algebra is exactly as in that case.
The only point to check is that each form is in the correct operator domain.  Thus fix $\psi \in \calD_0$ with 
$\Delta_0 \psi = \lambda \psi$ and set $\phi = \lambda^{-1/2}d\psi$.  By definition, $\phi \in L^2$, and 
$d\phi = 0$, so $\delta d \phi = 0$ too, hence both $d\phi, \delta d \phi  \in L^2$.  Next, $\delta \phi = 
\lambda^{-1/2}\Delta_0 \psi = \lambda^{1/2} \psi \in L^2$, and finally $d \delta \phi = \lambda \phi \in L^2$.  
This proves that $\phi \in \calD_1$, as claimed.  

The previous proposition now gives that $\n d \psi \in L^2$. The other related assertions follow in the same way. 

The lowest eigenvalue $\lambda_1$ for $\Delta_1$ is strictly positive. Using the Weitzenb\"ock formula $\Delta = \na \n + \Ric$,
the fact that we can integrate by parts for forms in the Friedrichs domain, and that $\Ric = 1$ here, we see that if
$\lambda_1 \phi_1 = \na \n \phi_1 + \phi_1$, then $(\lambda_1 - 1) ||\phi_1||^2 = ||\n \phi_1 ||^2$, hence $\lambda_1 \geq 1$.  
If $\lambda_1 = 1$ then $\n \phi_1 \equiv 0$, but $\phi_1$ decays at each conic point, which is a contradiction; thus $\lambda_1 > 1$. 
\end{proof}

Although we will only use this estimate on the eigenvalues, it is worth mentioning that a sharper one is true.
This is due to Hartmut Wei\ss{} and the proof appears in \cite{MW}.
\begin{Prop}[Wei\ss{}]
If $(N,h)$ is a spherical cone-surface with all cone angles less than $2\pi$, then the smallest nonzero eigenvalue for the 
Friedrichs extension of the scalar Laplacian on $N$ satisfies $\lambda_1 \geq 2$, with equality if and only if $(N,h)$ is 
isometric to the spherical suspension of the circle $\Sph^1_\alpha$.
\label{pr:weiss}
\end{Prop}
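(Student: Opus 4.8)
The plan is to run the classical Lichnerowicz--Obata argument, paying close attention to the conical singularities. Let $u \in \calD_{0,\Fr}$ satisfy $\Delta_0 u = \lambda u$ with $\lambda = \lambda_1 > 0$, where $\Delta_0$ denotes the nonnegative Laplacian. On the regular part $N^{\reg}$ the Gauss curvature is $+1$, hence $\Ric = h$ there, and the Bochner--Weitzenb\"ock identity reads
\begin{equation*}
-\tfrac12 \Delta_0 |\n u|^2 = |\n^2 u|^2 - \lambda |\n u|^2 + |\n u|^2 .
\end{equation*}
I would integrate this over $N$, use the quadratic-form identity $\|\n u\|_{L^2}^2 = \lambda \|u\|_{L^2}^2$ (valid since $u$ lies in the Friedrichs domain, by Proposition~\ref{thm:stokes}), and use the pointwise Cauchy--Schwarz bound $|\n^2 u|^2 \ge \tfrac12(\mathrm{tr}\,\n^2 u)^2 = \tfrac12\lambda^2 u^2$ available in dimension two, obtaining $0 \ge \|u\|^2(\lambda - \tfrac12\lambda^2)$, i.e.\ $\lambda \ge 2$.

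The only point requiring justification is that $\int_N \Delta_0|\n u|^2$ produces no contribution at the cone points, and this is precisely where the hypothesis $\alpha < 2\pi$ enters. By Proposition~\ref{pr:Laph0} and the conic regularity theory (Proposition~\ref{pr:conicreg} and its corollary), $u$ is polyhomogeneous near each cone point $q$; since the Friedrichs domain excludes the $\log$ term and the smallest indicial root of $\Delta_0$ beyond $0$ is $\gamma = 2\pi/\alpha > 1$, its expansion there has the form $u = a_0 + O(r^\mu)$ with $a_0$ constant and $\mu = \min(\gamma,2) > 1$. Hence $|\n u|^2 = O(r^{2\mu-2}) \to 0$ and $\n^2 u = O(r^{\mu-2}) \in L^2$ near $q$, so $|\n^2 u|^2 \in L^1(N)$. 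Excising $\e$-balls around the cone points and applying the divergence theorem on the complement leaves a boundary term of size $O(\e^{2\mu-2}) \to 0$. This establishes $\lambda \ge 2$.

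For the equality case, if $\lambda = 2$ then Cauchy--Schwarz is saturated pointwise, forcing the Obata equation $\n^2 u = -u\,h$ on $N^{\reg}$. Differentiating $|\n u|^2 + u^2$ and using this equation shows it has vanishing gradient, hence is constant, and we rescale $u$ so that $|\n u|^2 + u^2 \equiv 1$. Every cone point of $N$ is a critical point of $u$ (the leading correction to $u$ there is of order $r^\mu$ with $\mu>1$, so $\n u \to 0$), and at a critical point $u^2 = 1$, so all cone points lie in $M_\pm := u^{-1}(\pm 1)$; in particular $N \setminus (M_+ \cup M_-)$ is genuinely smooth and $\n u \neq 0$ there. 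On it, $t := \arccos u$ satisfies $|\n t| \equiv 1$, and the Obata equation forces the second fundamental form of each level set $\{t=c\}$ to equal $(\cot t)\,h_t$, $h_t$ the induced metric; integrating along the unit flow of $\n t$ yields the warped-product form $h = dt^2 + \sin^2 t\, g_L$ for a fixed metric $g_L$ on a level circle $L$. Letting $t\to 0$ and $t\to\pi$ shows $M_+$ and $M_-$ are single points and that $(N,h)$ is the spherical suspension of $(L,g_L) = \Sph^1_\alpha$ with $\alpha = \mathrm{length}(L)$. Conversely, on this suspension $\cos t \in \calD_{0,\Fr}$ and $\Delta_0(\cos t) = 2\cos t$, so $\lambda_1 = 2$ is attained.

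The main obstacle is the regularity input underpinning both halves: one needs to know precisely which indicial exponents occur in the expansion of a Friedrichs eigenfunction, both to legitimize the integration by parts in the Bochner identity --- where the bound $\gamma > 1$ coming from $\alpha < 2\pi$ is used essentially --- and, in the equality case, to force the conical points to the two poles of the warped product so that the Obata rigidity survives the singularities. The remaining steps are the classical Lichnerowicz and Obata computations; full details appear in \cite{MW}.
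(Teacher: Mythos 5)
The paper does not actually prove this proposition: it is stated for the record, attributed to Weiss, and the proof is deferred to \cite{MW} (the main argument of the paper only uses the weaker bound $\lambda_1>1$ for $\Delta_1$, which Proposition~\ref{prop:bashilb} obtains by a separate Weitzenb\"ock argument). So there is no in-paper proof to compare against; judged on its own, your Lichnerowicz--Obata argument is the natural one and I believe it is correct. You have correctly isolated the one genuinely singular point of the argument, namely the justification of $\int_N \Delta_0|\n u|^2=0$: the Friedrichs condition kills the $\log r$ term, the remaining exponents are $0$ and then $\min(2\pi/\alpha,2)>1$, and it is exactly the hypothesis $\alpha<2\pi$ that puts $\n^2u$ in $L^2$ and makes the boundary contribution $O(\e^{2\mu-2})$ vanish; this is consistent with Propositions~\ref{pr:Laph0} and \ref{pr:conicreg}. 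Two places where your sketch of the rigidity case is thin but repairable by standard means: (i) you should say why $u$ actually attains the values $\pm1$ (its max is attained either at a regular critical point or at a cone point, and in either case $u^2=1$ there, while $\int_N u=0$ forces both signs); and (ii) the step ``$M_\pm$ are single points'' needs the observation that each point of $M_+\cup M_-$ has a connected punctured neighbourhood (true both at regular points and at cone points of a cone-surface), so the foliated complement is a single cylinder $(0,\pi)\times L$ and connectedness of $N$ closes the suspension. With those remarks supplied, the proof is complete and is presumably the same Lichnerowicz--Obata argument carried out in \cite{MW}.
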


\subsection*{Computations near a singular edge}
We now turn to computations for the operator $P$ on the three-dimensional cone-manifold $M$ near a singular edge, 
with dihedral angle $\alpha \in (0,2\pi)$. Recall \eqref{eq:metric1conemfd} that the local expression of the metric 
in cylindrical coordinates is $d\rho^2 + \sn_{\K}^2 \rho \, d\theta^2 + \cs_{\K}^2 \rho \, dy^2$.

Any $1$-form $\eta$ can be written in a neighbourhood of a point on this edge as $f d\rho + g \rho d\theta + h dy$, so that 
the pointwise norm of $\eta$ is comparable to $(|f|^2 + |g|^2 + |h|^2)^{1/2}$. In this basis, both the Hodge Laplacian on $1$-forms,
$\Delta_1$, and $P$ are incomplete elliptic edge operators. The indicial and normal operators for either of these are the same as 
for the corresponding operators in the flat case, so are all the same as for $\na \n$ on the flat edge $C(\Sph^1_\alpha) \times \RR$. 

We first calculate that
\begin{equation*}
I(\na\n)(f d\rho + g \rho d\theta + h dy) = I( (\na \n)^{C(\Sph^1_\alpha)}) (f d\rho + g \rho d\theta) + (I((\Delta_0)^{C(\Sph^1_\alpha)})\, h)\, dy.
\end{equation*}
Thus using the computations of the last subsection, we see that the set of indicial roots is given by
\begin{equation}\label{eq:rootedge}
\Lambda_{\mathrm{e}}(\na \n) = \Lambda_{\mathrm{e}}(P) = \{n\gamma +1 , n\gamma -1, n\gamma; \ n\in \Z \},
\end{equation}
with corresponding solutions the $1$-forms $\eta_n^{\pm\pm}$ from \eqref{eq:solindcs} in the first two components and
$r^{\pm n\gamma} e^{in\gamma \theta} dy$ in the last component. For $n = 0$, the solutions are $dy$ and $\log r\, dy$. 

The only indicial roots lying in the critical interval $(-1,1]$ are $0$ (with multiplicity $2$) and also $1-\gamma$, $\gamma-1$
if the cone angle is between $\pi$ and $2\pi$. 

To proceed further, we must analyze the normal operator. 
\begin{Prop}
Fix $\delta > 1$ with $\delta -1 \notin \Lambda_{\mathrm{e}}(\na \n)$. Then 
\[
N(\na \n): \rho^\delta H^2_{\mathrm{e}}(C(\Sph^1_\alpha) \times \RR) \longrightarrow \rho^{\delta-2}L^2(C(\Sph^1_\alpha) \times \RR)
\]
is injective with closed range; moreover, for the same values of $\delta$, 
\[
N(\na \n): \rho^{2-\delta} H^2_{\mathrm{e}}(C(\Sph^1_\alpha) \times \RR) \longrightarrow \rho^{-\delta}L^2(C(\Sph^1_\alpha) \times \RR)
\]
is surjective.
\label{pr:nopis}
\end{Prop}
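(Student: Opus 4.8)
The plan is to reduce the two claims to the general Fredholm and regularity theory for incomplete edge operators (Propositions~\ref{pr:frededge} and~\ref{pr:regedge}, and the parametrix construction of \cite{Ma-edge}) by an explicit separation of variables on the flat model $C(\Sph^1_\alpha)\times\RR$. Since the normal operator of $P$ along a singular edge coincides with that of $\na\n$ on this model, it suffices to analyze $N(\na\n)$ there. Conjugating \eqref{eq:normalop} by $\rho$ on each side (as for the indicial operator $I(\calL)=r^{-1}I(A)r^{-1}$), then taking the Fourier transform in the edge variable $y$ (with dual variable $\xi$) and decomposing a $1$-form into eigenmodes of $\calL_N$ --- explicit in the frame $d\rho\mp i\rho\,d\theta$, $dy$ together with the angular Fourier modes on $\Sph^1_\alpha$ --- turns the equation $N(\na\n)u=0$ into the family of modified Bessel equations $-(\rho\partial_\rho)^2u+\nu_k^2\,u+\rho^2|\xi|^2\,u=0$, where by \eqref{eq:solindcs} the nonnegative indicial roots $\nu_k$ run over $\{k\gamma+1,\ |k\gamma-1|,\ k\gamma:\ k\in\Z_{\ge 0}\}$, all but finitely many of which are large. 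For $\xi\neq 0$ the two-dimensional solution space is spanned by $I_{\nu_k}(|\xi|\rho)$, which vanishes like $\rho^{\nu_k}$ at $\rho=0$ and grows exponentially as $\rho\to\infty$, and by $K_{\nu_k}(|\xi|\rho)$, which decays exponentially at $\rho=\infty$ and behaves like $\tfrac12\Gamma(\nu_k)(|\xi|\rho/2)^{-\nu_k}$ as $\rho\to 0$ (like $-\log\rho$ when $\nu_k=0$).

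First I would prove that $N(\na\n)$ is injective on $\rho^\delta L^2(C(\Sph^1_\alpha)\times\RR)$ for every $\delta>1$ with $\delta-1\notin\Lambda_{\mathrm{e}}(\na\n)$. Let $u\in\rho^\delta L^2$ solve $N(\na\n)u=0$. In each mode with $\xi\neq 0$, square-integrability of $u$ as $\rho\to\infty$ forces $u$ to be a multiple of $K_{\nu_k}(|\xi|\rho)$, whose leading behaviour at $\rho=0$ is $\rho^{-\nu_k}$ (or $\log\rho$ when $\nu_k=0$) with a \emph{nonzero} coefficient, since $\Gamma(\nu_k)\neq 0$ for every $\nu_k>0$; this absence of a ``resonance'' is the crux of the argument. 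But for $\delta>1$ we have $-\nu_k\le 0<\delta-1$, so $\rho^{-\nu_k}\notin\rho^\delta L^2$ near $\rho=0$, and $\log\rho$ is not in $\rho^\delta L^2$ either; hence $u$ vanishes in every such mode, while the $\xi=0$ modes contribute nothing to an $L^2(\RR_y)$-nullvector. Therefore $u\equiv 0$. Given this injectivity and the fact that $\delta-1$ is non-indicial, the edge parametrix construction of \cite{Ma-edge} (cf.\ the discussion preceding Proposition~\ref{pr:regedge}) yields a left inverse for $N(\na\n)$ on $\rho^\delta L^2$ modulo a compact operator, so the range is closed; this is the first assertion.

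The surjectivity of $N(\na\n):\rho^{2-\delta}H^2_{\mathrm{e}}\to\rho^{-\delta}L^2$ for the same $\delta$ then follows by duality, with no further computation. The operator $N(\na\n)$ is formally self-adjoint for the $L^2$ inner product of the model metric, and the weighted spaces occurring pair off dually under that inner product ($\rho^{-\delta}L^2$ with $\rho^{\delta}L^2$, and weight $2-\delta$ with weight $\delta-2$, the weights $\delta$ and $2-\delta$ being symmetric about $(n-m)/2=1$). Consequently the Hilbert-space adjoint of $N(\na\n):\rho^{2-\delta}H^2_{\mathrm{e}}\to\rho^{-\delta}L^2$ is, after using edge elliptic regularity (Proposition~\ref{pr:regedge}) to identify its target, precisely the map $N(\na\n):\rho^\delta H^2_{\mathrm{e}}\to\rho^{\delta-2}L^2$ just shown to be injective with closed range; hence the original map is surjective. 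This passage between the mapping properties of a normal operator at a weight and at the dual weight is built into the edge calculus of \cite{Ma-edge}. The one genuinely substantive point in the whole argument is the no-resonance statement in the injectivity step --- that in every relevant mode the exponentially decaying solution really carries the singular power $\rho^{-\nu_k}$ --- together with the routine care required for the degenerate indicial root $\nu_k=0$ arising from the $dy$-component; once these are settled, the closed-range and duality conclusions are formal consequences of the general edge machinery.
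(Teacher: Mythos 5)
Your proposal is correct and follows essentially the same route as the paper: separation of variables (Fourier series in $\theta$, Fourier transform in $y$), explicit modified Bessel solutions, exclusion of the exponentially growing $I_\nu$ branch, and the observation that the surviving $K_\nu$ solutions behave like $\rho^{-\nu}$ or $\log\rho$ and hence miss $\rho^\delta L^2$ for $\delta>1$, with closed range coming from the uniform left-inverse construction of \cite{Ma-edge}. The only cosmetic difference is that the paper's proof obtains surjectivity at weight $2-\delta$ by directly constructing a right inverse ``in exactly the same way,'' whereas you pass through the duality/closed-range theorem --- but the paper itself records this duality equivalence in the paragraph immediately following the proof, so the two are interchangeable.
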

\begin{proof}
We begin by calculating that
\begin{equation*}
N(\na\n)(f d\rho + g \rho d\theta + h dy) = 
N( (\na \n)^{C(\Sph^1_\alpha)\times \R}) (f d\rho + g \rho d\theta) + \left(N((\Delta_0)^{C(\Sph^1_\alpha)\times \R}) h \right) \, dy
\end{equation*}
and computing the homogeneous solutions of $N(\na \n)\eta = 0$. 

Suppose that $\eta \in \rho^\delta L^2$ is a solution of $N(\na\n) \eta = 0$. Decomposing into Fourier series in $\theta$ and 
taking Fourier transform in $y$ gives the family $\hat \eta_n(\rho,\xi) = \hat f_n d\rho + \hat g_n\, \rho d\theta + \hat h_n \,dy$
which satisfies the partially coupled equations
\begin{equation*}
\begin{cases}
\left(-(\rho\d_\rho)^2 +n^2\gamma^2+1+\rho^2\xi^2\right)\hat f_n +2in\gamma\hat g_n = 0\\
\left(-(\rho\d_\rho)^2 +n^2\gamma^2+1+\rho^2\xi^2\right)\hat g_n -2in\gamma\hat f_n = 0
\end{cases}
\end{equation*}
and
\begin{equation*}
\left(-(\rho\d_\rho)^2 +n^2\gamma^2+\rho^2\xi^2\right)\hat h_n = 0.
\end{equation*}

These can be solved explicitly.  Using complex conjugation, assume that $n \geq 0$.  When $\xi = 0$, these equations
reduce to the indicial equations above, and we have already obtained the solutions; none are in $\rho^\delta L^2$ globally
in $\rho$. If $\xi \neq 0$ and $n \neq 0$, then
\begin{align*}
\hat f_n &= c_n^1(\xi) I_{n\gamma-1}(\rho |\xi|) + c_n^2(\xi) K_{n\gamma-1}(\rho |\xi|) + 
c_n^3(\xi) I_{n\gamma+1}(\rho |\xi|) + c_n^4(\xi) K_{n\gamma+1}(\rho |\xi|) \nonumber\\
\hat g_n &= i c_n^1(\xi) I_{n\gamma-1}(\rho |\xi|) + ic_n^2(\xi) K_{n\gamma-1}(\rho |\xi|) - 
ic_n^3(\xi) I_{n\gamma+1}(\rho |\xi|) - ic_n^4(\xi) K_{n\gamma+1}(\rho |\xi|) \nonumber\\
\hat h_n &= c_n^5(\xi) I_{n\gamma}(\rho |\xi|) + c_n^6(\xi) K_{n\gamma}(\rho |\xi|),
\end{align*}
while if $n=0$, then
\begin{align*}
\hat f_0 &= c_0^1(\xi) I_{1}(\rho |\xi|) + c_0^2(\xi) K_{1}(\rho |\xi|) \nonumber\\
\hat g_0 &= c_0^3(\xi) I_{1}(\rho |\xi|) + c_0^4(\xi) K_{1}(\rho |\xi|) \nonumber\\
\hat h_0 &= c_0^5(\xi) I_{0}(\rho |\xi|) + c_0^6(\xi) K_{0}(\rho |\xi|),
\end{align*}
where $c_n^1,\ldots,c_n^6$ are arbitrary functions or distributions and $I_a$, $K_a$ are the modified Bessel functions of 
first and second kind (also called McDonald functions or just Bessel functions of imaginary argument). Their
asymptotics, 
\[
I_a(x) \sim \frac{1}{2^a \Gamma(a+1)}\,x^a, \qquad 
K_a(x) \sim \begin{cases}\Gamma(a)2^{a-1}\,x^{-|a|} &\mbox{ if  } a\neq 0 \\
-\log (x/2) &\mbox{ if }a=0,\end{cases}, \qquad x \to 0,
\]
\[
I_a(x) \sim \frac{1}{\sqrt{2\pi x}}\,e^x, \qquad K_a(x) \sim \sqrt{\frac{\pi}{2x}}\,e^{-x}, \qquad x \to \infty
\]
can be found in \cite{Le}. 
 
Since $\eta \in \rho^\delta L^2$, we can immediately rule out the exponentially growing solutions involving $I_a$, so 
$c_n^1, c_n^3, c_n^5 \equiv 0$. Thus $\hat \eta_n$ decays exponentially as $\rho \to \infty$, while as $\rho \to 0$,
if $n \neq 0$, then $\hat \eta_n \sim \rho^\mu$ ,  where $\mu = -|n|\gamma -1$, $-|n|\gamma$ or $-|n|\gamma+1$,
whereas $\hat \eta_0 \sim \rho^{-1}$ or $\log \rho$. If $1-\gamma < \delta < 1$, then $\hat \eta_0 =
K_0(\rho |\xi|)\, dy \in \rho^\delta L^2(\rho d\rho d\xi)$; however, if $\delta \geq 1$, then none of the $\hat \eta_n$
lie in this space.  This proves that $N(\na \n)$ is injective on $\rho^\delta L^2$ whenever $\delta > 1$.  

The fact that $N(\na \n)$ acting on $\rho^\delta L^2$ has closed range follows directly from the general argument
in \cite{Ma-edge}, see in particular Theorem 5.16 of that paper. That argument involves the construction of a left 
inverse for $N(\na \n)$ which is bounded on $L^2$. A crucial part of this is that $\delta-1$ is not an indicial root. 
For the reader's convenience, let us sketch this argument. Introducing Fourier series in $\theta$ and Fourier transform 
in $y$, we decompose the problem into a family of ordinary differential operators; it is sufficient to construct a left inverse 
for each of these and then show that the norms of these left inverses are bounded independently of $n$ and $\xi$.  
Each of the ODE's which arise in this decomposition are Bessel operators, and one can write these left inverses quite explicitly,
as in \eqref{Green} below. A direct inspection of these Schwartz kernels leads to the uniformity in $n$ and $\xi$ of
their operator norms. One can proceed in exactly the same way to construct a right inverse for $N(\na \n)$ acting on 
$\rho^{2-\delta}L^2$. This shows that the two mappings in the statement of this Proposition both have closed
range and are, respectively, injective and surjective. 
\end{proof}

The two assertions, about injectivity and surjectivity, in this Proposition are equivalent to one another by 
a duality argument. Indeed, using the natural pairing $\rho^{\delta'} L^2 \times \rho^{-\delta'}L^2 \to \CC$ for any 
$\delta'$, if $u \in \rho^{\delta}H^2_{\mathrm{e}}$ and $v \in \rho^{2-\delta}H^2_{\mathrm{e}}$, then 
$\langle N(\na \n)u, v \rangle = \langle u, N(\na \n) v \rangle$. The pairing on the left is between $\rho^{\delta-2}L^2$ 
and  $\rho^{2-\delta}L^2$ while the one on the right is between $\rho^\delta L^2$ and $\rho^{-\delta} L^2$.  
This shows that the dual of $N(\na \n)$ acting on $\rho^\delta L^2$ is naturally identified with the same
operator acting on $\rho^{2-\delta}L^2$, and hence the first is injective with closed range if and only if
the second one is surjective.  As we have remarked earlier, the first mapping has an infinite 
dimensional cokernel, and equivalently, the second mapping has an infinite dimensional nullspace, hence this 
particular problem does not satisfy the hypotheses described in \S 5 that we can choose a value of the weight 
parameter so that $N(\na \n)$ is an isomorphism. Thus Proposition \ref{pr:frededge} does not directly apply, but
as indicated earlier, we can construct a parametrix relative to a different choice of boundary condition.

\subsection*{Modified parametrix construction}
We now wish to construct a parametrix for the Friedrichs extension of $P$. 

The first point is that the difficulty with the choice of weight parameters is absent if we work on the subspace 
$(\rho^\delta L^2)^\perp$ consisting of $1$-forms which are in $\rho^\delta L^2$ and for which the coefficient 
of $e^{in\gamma \theta}$ vanishes when $n=0$.  The construction of an inverse using Fourier analysis and ODE
methods indicated above shows that $N(\na \n): (\rho^\delta H^2_{\mathrm{e}})^\perp \to (\rho^{\delta-2} L^2)^\perp$ 
is an isomorphism when $\delta \in (2-\gamma, \gamma)$. Therefore it suffices to consider the restriction 
of the normal operator to the complementary subspace $(\rho^\delta L^2)_0$ consisting of forms for which only
the $n=0$ Fourier coefficient is nonzero. On this subspace, the only difficulty occurs in the third component of
the $1$-form, since the operator on that piece is the normal operator for the scalar Laplacian,
$N(\Delta_0) = -\rho^{-2}\left((\rho \del_\rho)^2 + \rho^2 \del_y^2\right)$.

Let $f \in L^2(\RR^+ \times \RR, r dr dy)$.  We shall construct a solution 
$u = G_0 f$ to $N(\Delta_0)u = f$ with $u \in \rho^{1-\e}L^2$ near $\rho=0$ for any $\e > 0$. Because $N(\Delta_0)$
is surjective and has infinite dimensional kernel on  this particular weighted space, there are many different solutions 
to this problem, each of which has an asymptotic expansion of the form $u \sim a(y) + b(y) \log \rho + \tilde{u}$ where 
$\tilde{u} \in \rho^{\delta}L^2$. The solution $u$ which we seek is characterized by the condition that $b(y) \equiv 0$.  
The uniqueness of this solution is immediate since we already checked that the only solutions of $N(\Delta_0)u = 0$ 
in $\rho^{1-\e}L^2$ correspond to $K_0(\rho |\xi|)$, and this has a log term in its expansion. 

The construction of $G_0$ proceeds by first taking the Fourier transform in the $y$ variable, and then 
expressing a right inverse for the resulting operator $\widehat{N}(\Delta_0) = \rho^{-2}\left((-\rho \del_\rho)^2 + \rho^2 \xi^2\right)$
in terms of the homogeneous solutions for this operator, $I_0(\rho |\xi|)$, $K_0(\rho |\xi|)$.  The tempered solution
to $\widehat{N}(\Delta_0)\hat{u} = \hat{f}$ which has no $\log \rho$ term is given by integrating in $\rho'$ (with respect to $\rho'd\rho'$) against the Green function
\begin{equation}
\widehat{G}_0(\rho,\rho',\xi) := - I_0(\rho |\xi|) K_0(\rho'|\xi|) H(\rho' - \rho) - K_0(\rho |\xi|) I_0(\rho'|\xi|) H(\rho - \rho')
\label{Green}
\end{equation}
where $H$ denotes the Heaviside function, i.e. 
$$\widehat{G}_0 \hat{f} = -I_0(\rho |\xi|) \int_\rho^\infty K_0(\rho'|\xi|)\hat{f}(\rho',\xi)\rho'd\rho'  - K_0(\rho |\xi|)  \int_0^\rho I_0(\rho'|\xi|)\hat{f}(\rho',\xi)\rho'd\rho',$$
and from this we obtain that the Schwartz kernel of $G_0$ itself is given by
\[
G_0(\rho,\rho',y,y') = \frac{1}{2\pi} \int_{-\infty}^\infty e^{i(y-y')\xi} \widehat{G}_0(\rho, \rho', \xi)\, d\xi.
\]

If $f$ has compact support in the $y$ direction, so that $\hat{f}(\rho,\xi) \in L^2$ decays rapidly as $|\xi| \to \infty$, then using the above expression and the asymptotics of $I_0$ and $K_0$, we see that 
$\hat{u}_0 = \widehat{G}_0 \hat{f} \in \rho^{1-\e}L^2$ for any $\e > 0$, and $\hat{u}_0 = \hat{a}(\xi) + \hat{v}(\rho,\xi)$, where $ \hat{a}(\xi) = -  \int_0^\infty K_0(\rho'|\xi|)\hat{f}(\rho',\xi)\rho'd\rho',$
$\hat{v} \in \rho^2 L^2$, and both $\hat{a}$ and $\hat{v}$ decay rapidly in $\xi$. 

We have now constructed a right inverse for $N(\na \n)$ on all the different components of $1$-forms which lie in 
$\rho^{\delta - 2}L^2$, namely for $1$-forms with vanishing $n=0$ Fourier mode, for $1$-forms with only 
$d\rho$ and $\rho d\theta$ components and with only $n=0$ Fourier mode nonvanishing, and finally, by this last 
construction, for $1$-forms with $dy$ component lying in this $n=0$ Fourier mode subspace.  Inserting this
directly into the proof of \cite[Thm. 6.1]{Ma-edge}, and following that proof, we obtain a right parametrix $G_P$
for $P$, i.e.\ such that $P G_P = I - K_P$ on $L^2$.  More specifically, $G_P$ and $K_P$ are pseudodifferential edge 
operators of order $-2$ and $-\infty$, respectively; there is a precise description of the asymptotic behaviour of
the Schwartz kernels of these operators, and these determine the fine mapping properties of $P$. In fact, using
this asymptotic description exactly as in Proposition~\ref{pr:regedge}, we obtain that $P$ has closed range of
finite codimension in $L^2$; since we can identify its cokernel with the nullspace of $P$ on this same space,
which we know is trivial, we obtain that $P$ is surjective.  Using the precise vanishing order and description of the
leading coefficient in the expansion of the Schwartz kernel of $G_P$ as $\rho \to 0$, which is the same as the vanishing order 
and leading coefficient in the expansion for the right inverse of $N(\na \n)$, we have proved the first assertion in
the following 
\begin{Prop}
Let $f \in L^2$. Then the solution $\eta \in \calD_{\Fr}(P)$ to $Pu = f$ lies in $\rho^{1-\e}H^2_{\mathrm{e}}$ for all $\e > 0$
and it has no $\log \rho$ term in its asymptotic expansion as $\rho \to 0$. 

If $\eta \in \calD_\Fr(P)$ and $P \eta = f \in \calA_{\phg} \cap L^2$, then $\eta \in \calA_{\phg}$ near this edge and 
\begin{equation}
\eta \sim a(y) dy + c^{-+}(y)\, \eta^{-+}_{+1} + c^{+-}(y)\, \eta^{+-}_{-1} + \calO(\rho^\mu)
\label{eq:expedge}
\end{equation} for some $\mu > 1$, where $\eta^{-+}_{+1}$ and $\eta^{+-}_{-1}$ are as in \eqref{eq:solindcs} (with $\rho$ instead of $r$).
 Because of this, $\n \eta$, $\n d\eta$ and $d \delta \eta$
all lie in $L^2$. 
\label{pr:regedge2}
\end{Prop}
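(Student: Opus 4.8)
The first assertion — that $\eta\in\rho^{1-\e}H^2_{\mathrm e}$ and has no $\log\rho$ term as $\rho\to0$ — was just obtained from the modified parametrix $G_P$, so what remains is to prove polyhomogeneity when $f$ is polyhomogeneous, to extract the expansion \eqref{eq:expedge}, and to deduce the three $L^2$ bounds. For polyhomogeneity the plan is to rerun the regularity argument behind Proposition~\ref{pr:regedge}, but with the bespoke parametrix $G_P$ in place of the (here nonexistent) inverse of $N(\na\n)$. The operators $G_P$ and $K_P:=\mathrm{Id}-PG_P$ are edge pseudodifferential operators of orders $-2$ and $-\infty$ whose Schwartz kernels, by the constructions of \cite{Ma-edge}, are polyhomogeneous with index sets governed by the indicial set $\Lambda_{\mathrm e}(\na\n)$ of \eqref{eq:rootedge} together with the Friedrichs boundary condition built into $G_P$; hence $G_P$ carries $L^2\cap\calA_{\phg}$ into $\calA_{\phg}$, and $K_P$, being smoothing, carries $L^2$ into $\calA_{\phg}$. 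Since $P$ is surjective with trivial nullspace on $\calD_{\Fr}(P)$, $\eta$ is its unique solution; writing $\eta=G_Pf+(\eta-G_Pf)$ with $P(\eta-G_Pf)=K_Pf\in L^2\cap\calA_{\phg}$ and iterating, absorbing the gain produced by $K_P$ at each step, yields $\eta\in\calA_{\phg}$ near the edge. I expect this step — checking that the parametrix patched together from the Green function \eqref{Green} in the $dy$-component and the explicit Bessel solutions in the remaining components has precisely the index set needed to propagate polyhomogeneity while respecting the Friedrichs condition — to be the main obstacle; the rest is bookkeeping.

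Granting polyhomogeneity, the expansion \eqref{eq:expedge} is read off as follows. Its exponents are indicial roots of $P$ (from \eqref{eq:rootedge}) or shifts $\gamma'+2$ of exponents $\gamma'$ of $f$, and $f\in L^2$ forces every such shift to exceed $1$. The condition $\eta\in\calD_{\Fr}(P)=\{u:u,\n u,\na\n u\in L^2\}$ then forces the coefficient of each indicial solution below the Friedrichs threshold to vanish — in particular the solutions $\eta^{++}_{-1},\eta^{--}_{+1}$ at the root $1-\gamma$, whose presence would destroy $\n\eta\in L^2$ exactly as in the proof of Proposition~\ref{pr:Laph1}, and the $\log\rho\,dy$ term at the double root $0$. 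The surviving terms of size $\gtrsim\rho$ are therefore $a(y)\,dy$ (root $0$) and $c^{-+}(y)\eta^{-+}_{+1}+c^{+-}(y)\eta^{+-}_{-1}$ (root $\gamma-1$), with smooth coefficients, and everything else is $\calO(\rho^\mu)$ for some $\mu>1$.

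It then remains to check $\n\eta$, $\n d\eta$ and $d\delta\eta\in L^2$ by inspecting \eqref{eq:expedge} term by term against the metric \eqref{eq:metric1conemfd}, whose volume form is comparable to $\rho\,d\rho\,d\theta\,dy$. The remainder lies in $\rho^\mu H^2_{\mathrm e}\cap\calA_{\phg}$ with $\mu>1$, hence survives two covariant derivatives in $L^2$. For the two leading blocks the relevant facts are that $dy$ is locally harmonic and that $\eta^{-+}_{+1}=\gamma^{-1}d(\rho^\gamma e^{i\gamma\theta})$ and $\eta^{+-}_{-1}=\overline{\eta^{-+}_{+1}}$ are closed on $M$ with co-differentials of size $\calO(\rho^\gamma)$; consequently $d\eta$ and $\delta\eta$ only pick up the tangential derivatives $da\otimes dy$ and $dc^{-+}\wedge\eta^{-+}_{+1}$, $dc^{+-}\wedge\eta^{+-}_{-1}$ of the coefficients, of respective sizes $\calO(1)$ and $\calO(\rho^{\gamma-1})$, together with $\calO(\rho^\mu)$, and since $\gamma>1$ one more derivative of each still lies in $L^2$; a short direct computation of $\na\n(a(y)\,dy)$ using $\Ric=(n-1)\K g$ and the harmonicity of $dy$ disposes of the last term. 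This simultaneously shows that $\eta$ (and hence the gauged deformation built from it) lies in the domain on which the $L^2$ Stokes theorem, Proposition~\ref{thm:stokes}, applies.
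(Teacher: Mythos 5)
Your proposal follows essentially the same route as the paper: polyhomogeneity via the mapping properties of the modified edge parametrix $G_P$ (the paper likewise defers the index-set bookkeeping to Proposition~\ref{pr:regedge} and \cite{Ma-edge}), the expansion \eqref{eq:expedge} by matching against the indicial roots of \eqref{eq:rootedge} with the Friedrichs condition killing $\eta^{++}_{-1}$, $\eta^{--}_{+1}$ and the $\log\rho\,dy$ term, and the $L^2$ statements from the closedness of $dy$ and of $\eta^{-+}_{+1}=\gamma^{-1}d(\rho^{\gamma}e^{i\gamma\theta})$ together with $\gamma>1$. The argument is correct and, if anything, slightly more explicit than the paper's own term-by-term verification.
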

The remaining assertions are proved as follows. First, the fact that if $f$ is polyhomogeneous then the uniquely defined 
solution $\eta \in \calD_\Fr(P)$ is also polyhomogeneous is part of Proposition~\ref{pr:regedge} again, and represents a general
mapping property of pseudodifferential edge operators such as $G_P$. Once we know that $\eta$ is polyhomogeneous,
the initial terms in its expansion can be calculated directly from the equation $P \eta = f$ by inserting an arbitrary expansion
on the left for $\eta$ and matching terms with the given expansion for $f$. We have given all the calculations which
lead to the descriptions of the leading coefficients of $\eta$, and have explained why these lead to the final assertions
in the statement of this Proposition. This finishes the proof. 

Using these final assertions here, the proofs of Propositions \ref{pr:Laph0} and \ref{pr:Laph1} carry over verbatim and show 
the equivalence of the domains $\calD_{1,\Fr} = \calD_{1,\DN}$ for the operator $P$ on $M$, at least near the singular edges.

\subsection*{Computations near a singular vertex}
Now let $q$ be a singular vertex, so that as in \eqref{eq:metric2conemfd}, the metric has the form $g=dr^2 + \sn_{\K}^2 r \, h$, 
where the link $(N,h)$ is a spherical cone surface. Any $1$-form $\eta$ can be written in a neighbourhood of $q$ as 
$\eta = f\,dr + r \sigma$ where $\sigma$ has no $dr$ component. The indicial operator for $P$ is the same as the one for 
$\na \n$, and is given by
\begin{align*}
I(\na \n) \eta = &  r^{-2}\bigg[\left( \left(-(r\d_r)^2 - r\d_r + 2 + \Delta_{N,0} \right) f - 2 \delta_N \sigma \right) dr \\
& +  r \big( \left(-(r \d_r)^2 - r \d_r  + \Delta_{N,1}\right)\sigma - 2 d_N f \big)\bigg], 
\end{align*}
where $\Delta_{N,j}$ is the Laplacian on $j$-forms on $N$. 

Now decompose $f$ and $\sigma$ using the bases of eigenforms from Proposition \ref{prop:bashilb}.
Each summand in the coexact part of $\sigma$, i.e.\ each term $r^\mu \phi_j''$, yields the equation
\[
((r\d_r)^2 + r \d_r - \lambda_j) r^\mu = 0 \Longrightarrow \mu = -\frac12 \pm \frac12 \sqrt{1 + 4\lambda_j}.
\]
Similarly for $\lambda_0 = 0$, we obtain the equation $((r\d_r)^2 + r \d_r - 2) r^\mu = 0$, and the corresponding solutions 
are $\eta = rdr$ or $r^{-2}dr$.

When $\lambda_j \neq 0$, the terms $(a \psi_j + b \phi_j')r^\mu$ satisfy coupled equations which lead to the linear system
\[
\begin{pmatrix} \mu^2 + \mu - 2 - \lambda_j &  2 \sqrt{\lambda_j}  \\ 2 \sqrt{\lambda_j} & \mu^2 + \mu - \lambda_j
\end{pmatrix} \begin{pmatrix} a \\ b \end{pmatrix} = \begin{pmatrix} 0 \\ 0 \end{pmatrix},
\]
which we find, after a calculation, has solutions if and only if
\[
\mu = -\frac32 \pm \frac12 \sqrt{1 + 4 \lambda_j}, \qquad \frac12 \pm \frac12 \sqrt{1 + 4\lambda_j}.
\]

Except for the exceptional case $\lambda_0 = 0$, all other $\lambda_j$ are strictly greater than $1$ (in fact,
by Proposition \ref{pr:weiss} they are strictly greater than $2$, though we do not need to use this here). 
Using this bound, we see that the indicial roots in the critical interval $(-3/2,1/2]$ neatly separate into two groups, 
\[
-\frac32 + \frac12 \sqrt{1 + 4\lambda_j} \in (-\frac12, \frac12), \quad \mbox{and}
\quad
\frac12 - \frac12 \sqrt{1 + 4\lambda_j} \in (-\frac32, -\frac12),
\]
and these only occur when $\lambda_j \in (1, \frac{15}4)$. 

The Friedrichs extension allows the first group of solutions, but not the solutions which blow up faster than 
$r^{-1/2}$. Consider the case where $\lambda_j \neq 0$.  The corresponding solutions are multiples of
\[
\eta_j^+ = r^{-\frac32 + \frac12\sqrt{1+4\lambda_j}} \left( 2\sqrt{\lambda_j} \psi_j\, dr + (1 + \sqrt{1 + 4\lambda_j}) r \phi_j' \right).
\]
A computation then shows that $d\eta_j^+ = 0$. 

We have now assembled all the facts needed to prove Theorem \ref{thm:gauge}. Let $P$ denote the
Friedrichs extension $P_{\Fr}$. 

If $(M,g)$ is hyperbolic, then the usual integration by part is allowed for $\eta \in \calD_\Fr$, so the nullspace 
of $P = \na \n + 2$ is trivial. Hence in this case, $P: \calD_\Fr \to L^2$ is an isomorphism. Now suppose
that $f \in L^2 \cap \calA_{\phg}$ and let $\eta$ denote the unique solution in $\calD_\Fr$ to $P\eta = f$. 
Proposition \ref{pr:regedge2} shows that $\eta$ is polyhomogeneous near the singular edges, and its 
expansion \eqref{eq:expedge} implies that $\eta \in L^2_{\calD_N}$ near any singular vertex, where $\calD_N 
= \calD_{1,\Fr} = \calD_{1,DN}$.  Proposition \ref{pr:polysv} now implies that $\eta$ is also polyhomogeneous near each singular vertex. 

Since $\eta \in \calD_\Fr$, we automatically have that $\n \eta \in L^2$, and we have also seen in Proposition \ref{pr:regedge2} 
that $\n d\eta$ and $d\delta \eta$ lie in $L^2$ near the singular edges. It remains to check that this still holds near the 
singular vertices. But in any such neighbourhood, by the computations above it has an expansion 
$\eta = \sum c_j \eta_j^+ + \tilde{\eta}$ where $\tilde{\eta} \in r^2 s^2 H^2_\ie$; consequently 
$d\eta = d\tilde{\eta} \in rs H^1_\ie$, hence taking one more derivative shows that $\n d\eta \in L^2$, as required. 
Finally, $d \delta \eta = \Delta \eta - \delta d \eta \in L^2$ too.

If $(M,g)$ is Euclidean, then we can apply all the same reasoning so long as $P$ is still an isomorphism.  Because
of self-adjointness, it suffices to show that if $\eta$ is in the Friedrichs domain and $P\eta = 0$, then $\eta = 0$. 
Integrating by parts, we find first of all that $\nabla \eta = 0$.  If $(M,g)$ has no singular vertices, then since it admits
a global parallel vector field, it must have a cover isometric to $\R \times Y$ where $Y$ is a flat cone-surface.
Excluding this case, suppose that $(M,g)$ has at least one singular vertex. If $P\eta=0$, then $\eta$ is parallel but it is easy to see that there exists no non-trivial parallel $1$-form in the neighbourhood of a singular vertex, so $\eta=0$.

This completes the proof of Theorem \ref{thm:gauge} in all cases. 

\section{Infinitesimal rigidity}
We now prove our main result:
\setcounter{Thm}{0}
\begin{Thm}
Let $M$ be a closed, connected three-dimensional cone-manifold with all cone angles smaller than $2\pi$. If $M$ is hyperbolic, 
then $M$ is infinitesimally rigid relative to its cone angles, i.e.\ every angle-preserving infinitesimal deformation is trivial.
If $M$ is Euclidean, then every angle-preserving deformation also preserves the spherical links of the codimension $3$ 
singular points of $M$.

In particular, convex hyperbolic polyhedra are infinitesimally rigid relatively to their dihedral angles, while every 
dihedral angle preserving infinitesimal deformation of a convex Euclidean polyhedron also preserves the internal angles 
of the faces.
\end{Thm}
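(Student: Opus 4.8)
\medskip

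\noindent\textbf{Setup and gauge fixing.} By linearity it is enough to treat one angle-preserving infinitesimal deformation; I would represent its germ along $\Sigma$ by some $\dot g\in\mathbb V_\calS$ in standard form (Definition~\ref{def:sf}), extended over $M$ as a smooth infinitesimal Einstein deformation away from $\Sigma$. Proposition~\ref{prop:stdl2} then gives that $\dot g$ is polyhomogeneous along $\Sigma$ with $\dot g,\n\dot g\in L^2$, and $DE_g(\dot g)=0$. (If $\K=0$ and $M$ has no codimension-$3$ stratum the conclusion is vacuous, so in the Euclidean case one assumes such a stratum, which is exactly the hypothesis under which Theorem~\ref{thm:gauge} is stated.) The first real step is to gauge-fix: since $\dot g$ is polyhomogeneous with $\dot g,\n\dot g\in L^2$, the $1$-form $f:=B^g\dot g$ is polyhomogeneous and in $L^2$, so Theorem~\ref{thm:gauge} produces a polyhomogeneous $\eta$ with $P^g\eta=f$ and $\eta,\n\eta,\n d\eta,d\delta\eta\in L^2$; set $h:=\dot g-2\delta^*\eta$. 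Then $B^gh=f-2B^g\delta^*\eta=f-P^g\eta=0$, and since $2\delta^*\eta$ is a (formally) trivial deformation, $DE_g=\tfrac12L^g-\delta^*B^g$ gives $0=DE_g(h)=\tfrac12L^gh$, i.e.\ $h\in\ker L^g$. I would then check, using the regularity of $\eta$ — this is exactly why $\n d\eta$ and $d\delta\eta$ are built into Theorem~\ref{thm:gauge}, with the precise leading terms coming from the expansion~\eqref{eq:expedge} near the edges and from the corresponding expansion near the vertices — that $\delta^*\eta,\n\delta^*\eta\in L^2$, hence $h,\n h\in L^2$; since also $\na\n h=L^gh+2\K(h-(\tr h)g)=2\K(h-(\tr h)g)\in L^2$ and $h$ is polyhomogeneous (Propositions~\ref{pr:regedge2} and~\ref{pr:polysv}), $h$ lies in the Friedrichs domain of $L^g$, so the $L^2$-Stokes theorem (Proposition~\ref{thm:stokes}) will license the integrations by parts below.

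\medskip

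\noindent\textbf{The Bochner step.} Next I would run a Bochner argument on $h\in\ker L^g$. Because $g$ has constant curvature, $L^g$ preserves the splitting $h=h_0+u\,g$ into trace-free and pure-trace parts ($u=\tfrac13\tr h$), so $L^gh_0=0$ and $L^g(u\,g)=0$ separately. When $\K=-1$: by~\eqref{eq:Lg2}, $L^gh_0=\na\n h_0+2h_0=0$, so integrating by parts $\|\n h_0\|^2=-2\|h_0\|^2$, forcing $h_0\equiv0$; the Bianchi condition then reads $0=B^gh=\delta h_0+\tfrac12\,du=\tfrac12\,du$, so $u$ is constant, and since $L^g(u\,g)=4\K u\,g=-4u\,g$ this gives $u\equiv0$. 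Hence $h\equiv0$, so $\dot g=2\delta^*\eta$ is trivial: $\dot g\sim0$ on $M$, which is the hyperbolic statement. When $\K=0$ all curvature terms of $L^g$ vanish, so $L^gh=\na\n h=0$, and integrating by parts $\|\n h\|^2=0$: $h$ is parallel on $M$.

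\medskip

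\noindent\textbf{The Euclidean geometry and the polyhedra.} In the Euclidean case I would finish with a geometric interpretation of the parallel tensor $h$: near any codimension-$3$ vertex $p$, $h$ is invariant under the holonomy group $\Gamma_p\subset\mathrm{SO}(3)$ of $g$, so $h^{\sharp}$ commutes with $\Gamma_p$; hence $g$ and $g+\e h$ are related near $p$, to first order, by the linear map $\exp(\tfrac\e2 h^{\sharp})$, which normalizes $\Gamma_p$ and therefore induces a trivial first-order deformation of the link $N_p$. Thus $\dot g=h+2\delta^*\eta$ preserves every spherical link ($2\delta^*\eta$ being trivial), i.e.\ $\dot g\in\mathbb W_\calS$. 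For the polyhedral corollary, double a convex polyhedron $\calP\subset\HH^3$ or $\RR^3$ across all its faces to obtain a cone-manifold $M$ of the same curvature, with cone angles twice the dihedral angles of $\calP$ (hence $<2\pi$) and carrying the reflection involution $\iota$. A dihedral-angle-preserving infinitesimal deformation of $\calP$ doubles to an $\iota$-invariant angle-preserving infinitesimal deformation of $M$, and by uniqueness in Theorem~\ref{thm:gauge} the gauge, hence $h$, is $\iota$-invariant. In the hyperbolic case $h\equiv0$, so the metrics on $M$ are first-order isometric and the polyhedra $\calP_\e$ first-order congruent, i.e.\ $\calP$ is infinitesimally rigid relative to its dihedral angles. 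In the Euclidean case $\dot g\in\mathbb W_\calS$, so every spherical link of $M$ is preserved; the link at a vertex $v$ of $M$ is the double of the vertex figure of $\calP$ at $v$, whose preservation forces the face angles of $\calP$ at $v$ to be unchanged, and ranging over all vertices, all internal angles of all faces are preserved.

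\medskip

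\noindent\textbf{The main obstacle.} I expect the hard part to be the analysis underlying the gauge-fixing step: showing that $\eta$, and hence $h$, has an asymptotic expansion at the depth-$1$ and (restricted) depth-$2$ strata with exactly the leading exponents that place $h$ in the Friedrichs domain of $L^g$ — equivalently, that $\delta^*\eta$, $\n\delta^*\eta$, $\n d\eta$, $d\delta\eta$ are genuinely square-integrable, so that no boundary terms survive the Bochner integration by parts. This is the purpose of Theorem~\ref{thm:gauge} and of the restricted-depth-$2$ elliptic theory of Section~\ref{ellcm} (Propositions~\ref{pr:regedge2} and~\ref{pr:polysv} in particular); once those integrations by parts are licensed, the Bochner computation itself is formally the same as on a smooth closed manifold.
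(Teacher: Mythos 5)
Your setup, the gauge--fixing via Theorem \ref{thm:gauge}, the treatment of the trace, and the doubling argument for the polyhedral corollary all match the intended strategy. The Bochner step, however, has two genuine gaps, and they are precisely the two difficulties the actual proof is organized around.

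First, the claim that $\n\delta^*\eta\in L^2$, hence $\n h\in L^2$ and $h\in\calD_{\Fr}(L^g)$, does not follow from Theorem \ref{thm:gauge} and is false in general. That theorem supplies $\n\eta$, $\n d\eta$ and $d\delta\eta$ in $L^2$, but \emph{not} the full Hessian $\n\n\eta$. Near an edge of cone angle $\alpha\in(\pi,2\pi)$ the expansion \eqref{eq:expedge} contains the closed terms $\eta^{-+}_{+1},\eta^{+-}_{-1}\sim\rho^{\gamma-1}$ with $\gamma=2\pi/\alpha\in(1,2)$; since these are closed, $\delta^*$ of them coincides with $\n$ of them and is of size $\rho^{\gamma-2}$, so $\n\delta^*\eta\sim\rho^{\gamma-3}\notin L^2(\rho\,d\rho)$. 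Thus $\n h\notin L^2$ whenever these coefficients are nonzero, and the integration by parts $\langle\na\n h_0,h_0\rangle=\|\n h_0\|^2$ is not licensed by Proposition \ref{thm:stokes}. The correct device is to split $\n\eta$ into symmetric and antisymmetric parts, $2\delta^*\eta=2d^\n\eta-d\eta$, so that $d^\n h=d^\n(\dot g+d\eta)-2(d^\n)^2\eta$ lies in $L^2$ (this uses only $\n\dot g,\n d\eta\in L^2$ and the fact that $(d^\n)^2$ is an order--zero curvature term), and to run the Bochner argument entirely through $d^\n h$ and $\delta^\n h$, never through $\n h$.

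Second, the sign of the curvature term. Formula \eqref{eq:Lg2} as printed is inconsistent with $(\RO k)_{ij}=\K(g_{ij}\tr k-k_{ij})$, which gives $L^gk=\na\n k+2\K(k-(\tr k)g)$; on trace-free tensors with $\K=-1$ the gauged equation is $\na\n h_0-2h_0=0$, not $\na\n h_0+2h_0=0$. Even granting the integration by parts, the identity you would obtain is $\|\n h_0\|^2=+2\|h_0\|^2$, which yields nothing. This is the classical obstruction behind the Calabi--Weil--Koiso method: one must regard $h$ as a $T^*M$-valued $1$-form and use the Weitzenb\"ock formula $\na\n h=(\delta^\n d^\n+d^\n\delta^\n)h+\K((\tr h)g-3h)$ to convert the equation into $\delta^\n d^\n h+h=0$, whence $\|d^\n h\|^2+\|h\|^2=0$ and $h\equiv 0$. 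Relatedly, in the Euclidean case what one can actually extract is $d^\n h=\delta^\n h=0$, i.e.\ $h$ is a \emph{harmonic} (not parallel) $1$-form with values in the flat bundle of infinitesimal translations; one must then interpret $h$ as a cocycle deforming the holonomy representation by translations only and deduce that the spherical holonomy of each vertex link is undeformed. Your ``parallel $h$ commutes with the vertex holonomy'' shortcut would be fine if $h$ were parallel, but that premise rests on the same unjustified integration by parts.
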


A sketch of the proof has already been indicated at the end of \S \ref{sec:gauge}. It is a variant of the Calabi-Weil method 
due to Koiso \cite{Koiso} in the smooth compact case, cf. also \cite{Besse} \S 12.H and \cite{MontcouqThese}.

Let $\dot{g}$ be an infinitesimal angle-preserving deformation of $M$, which we assume is in standard form in a 
neighbourhood of the singular locus. Proposition \ref{prop:stdl2} implies that $\dot{g}$ and $\n \dot{g}$ are in $L^2$ and 
that $\dot{g}$ is polyhomogeneous along $\Sigma$; in addition, $\dot{g}$ satisfies 
\begin{equation*}
L \dot{g} -2\delta^*B(\dot{g})=0 
\end{equation*}
where $L$ is as in  \eqref{eq:Lg2}. (The superscripts indicating the metric will now be omitted.)

Now define a new infinitesimal deformation $h = \dot{g} - 2 \delta^* \eta$, where $\eta$ is the solution of
$P\eta = B \dot{g}$ obtained from Theorem \ref{thm:gauge}.  By construction, $h$ is in Bianchi gauge, $B(h) = 0$,
and by the same Theorem, since $B\dot{g} \in L^2 \cap \calA_{\phg}$, we have as well that $h \in L^2 \cap \calA_{\phg}$ 
and satisfies
\begin{equation*}
Lh  = \na \n h + 2\K\,(h - (\tr h) g) = 0. 
\end{equation*}
In order to show that the original deformation $\dot{g}$ is trivial (in the hyperbolic case), we must show that $h$ vanishes.

As a first step, take the trace of this equation with respect to $g$ to get
\begin{equation*}
(\Delta_0 - 4\K) \tr h= 0. 
\end{equation*}
Since $\tr h \in \calD_\Fr(\Delta_0)$, we can integrate by parts to conclude that $\tr h = 0$ when $(M,g)$
is hyperbolic, and that $\tr h$ is constant when $(M,g)$ is Euclidean.  This last conclusion is not surprising: 
any constant multiple of the metric $\lambda g$ is an infinitesimal deformation of an Euclidean cone-manifold, 
corresponding to a dilation of the metric. Although these homothetic infinitesimal deformations are nontrivial
according to our definition,  they are not particularly interesting. Hence we replace $h$ by $h - \frac13 (\tr h) g$,
and assume henceforth that $\tr h = 0$ in the Euclidean case as well.

\subsection*{Bochner technique in the hyperbolic case}
The $2$-tensor $h$ satisfies the system 
\begin{equation}\label{eq:4}
\na \n h -2 h =0, \quad \delta h = 0, \qquad \tr h =0.
\end{equation}
We now change our point of view slightly and regard the symmetric $2$-tensor $h$ as a $T^*M$-valued $1$-form, 
i.e.\ a section of $\Lambda^1(M;T^*M)$. Coupling $d$ with the Levi-Civita connection on $T^*M$ yields a differential 
$d^\n$, which extends to act on $T^*M$-valued forms of all degrees; its adjoint is denoted $\delta^\n$. 
Note that if $\omega \in \Gamma(T^*M)$ is a $T^*M$-valued $0$-form, then $d^\n \omega = \n \omega \in 
\Gamma(T^*M\otimes T^*M)$, and reciprocally if $k \in \Gamma(T^*M\otimes T^*M)$ is a $T^*M$-valued $1$-form 
then $\delta^\n k = \na k$.  A classical Weitzenb\"{o}ck formula, see \cite{Besse}, states that
\begin{equation*}
\na \n h = (\delta^\n d^\n + d^\n \delta^\n) h + \K((\tr h) g -3h).
\end{equation*}
By the second equation in \eqref{eq:4}, $\delta^\n  h = \na h = \delta h = 0$ and also, $\tr h = 0$; using this and $\K = -1$ in the
first equation there, we get
\begin{equation}\label{eq:finalhypeq} 
\delta^\n d^\n h + h = 0.
\end{equation}

If we were to already know that $h \in \calD_{\Fr}(\na \n)$, then $d^\n h \in L^2$ and we could take the inner product 
of (\ref{eq:finalhypeq}) with $h$ and integrate by parts to conclude that $h \equiv 0$.  However, we must argue 
further to show that $d^\n h \in L^2$. The key is to use the decomposition of $\n \eta = d^\n \eta$ into symmetric 
and antisymmetric parts: $d^\n \eta = 
\delta^* \eta + \frac12 d\eta$, or equivalently, $2\delta^* \eta = 2 d^\n \eta - d \eta$. This shows that
\[
d^\n h = d^\n (\dot{g} - 2 \delta^* \eta) = d^\n (\dot{g} + d\eta) - 2  (d^\n)^2 \eta. 
\]
However, $\n \dot{g}$ and $\n d \eta$ both lie in $L^2$, hence so does $d^\n (\dot{g} + d\, \eta)$.  Furthermore, 
$(d^\n)^2$ is the curvature operator $\eta \mapsto - \K \, \eta \otimes \, \mbox{Id} = \eta \otimes \, \mbox{Id}$, 
which has order $0$, so trivially $(d^\n)^2 \eta \in L^2$. This proves that $d^\n h \in L^2$, as claimed. 

Now integrate by parts. The main term becomes
\[
\langle \delta^\n d^\n h, h\rangle = 
 \langle \delta^\n d^\n h ,  \dot{g} + d\eta \rangle - 2 \langle \delta^\n d^\n h, d^\n \eta \rangle.
\]
But we just showed that $\n (\dot{g} + d\eta) \in L^2$, and this is enough to deduce
\[
\langle \delta^\n d^\n h, \dot{g} + d\eta \rangle  = \langle d^\n h, d^\n (\dot{g} + d\eta) \rangle.
\]

It remains to integrate by parts in $\langle \delta^\n d^\n h, d^\n \eta \rangle$. The key once again is that $(d^\n)^2$ 
is a bounded operator of order zero. Since $\eta, \n \eta \in L^2$, we see that $\eta \in \calD_{\max}(\n)$, which
equals $\calD_{\min}(\n)$ by Proposition \ref{thm:stokes}.   Hence by definition, there exists a sequence $\eta_k \in \calC^\infty_0
(M \setminus \Sigma)$ which converges to $\eta$ in the $H^1$ topology. Thus
\[
\langle \delta^\n d^\n h, d^\n \eta \rangle = \lim_{k\to \infty} \langle \delta^\n d^\n h, d^\n \eta_k \rangle,
\]
and since $\eta_k$ is $\calC^\infty_0$ and $(d^\n)^2$ is bounded,
\begin{equation*}
\langle \delta^\n d^\n h, d^\n \eta_k \rangle = \langle d^\n h, (d^\n)^2 \eta_k \rangle  
\longrightarrow \langle  d^\n h, (d^\n)^2 \eta \rangle.
\end{equation*}
This gives $\langle \delta^\n d^\n h, d^\n \eta \rangle = \langle d^\n h, (d^\n)^2 \eta \rangle,$
so that finally,
\[
\langle (\delta^\n d^\n + 2) h, h \rangle = ||d^\n h||^2 + 2||h||^2 = 0.
\]
This implies that $h \equiv 0$, and hence $\dot{g} = 2\delta^* \eta$: the infinitesimal deformation is trivial. 

\subsection*{Bochner technique in the Euclidean case}
When $\K = 0$, the symmetric traceless $2$-tensor $h$ satisfies the system
\begin{equation*}
\na \n h  =0, \quad \delta h = 0, \qquad \tr h =0.
\end{equation*}
Precisely the same argument as above implies that
\begin{equation}\label{eq:closed}
d^\n h  = 0, \qquad \delta^\n h =0.
\end{equation}
This has a simple interpretation: the bundle $T^*M$ with its Levi-Civita connection is flat, so we may couple it to
the de Rham complex and define the twisted cohomology spaces $H^*(M,T^*M)$. Then (\ref{eq:closed}) shows
that $h$ is the harmonic representative of a cohomology class in $H^1(M,T^*M)$.

To proceed further, we must discuss some of the geometric aspects of the deformation theory of flat cone-manifolds. 
Let $M_\reg = M \setminus \Sigma$ denote the regular part of $M$. This is an incomplete smooth flat Riemannian manifold.  
Its Euclidean structure can be lifted to the universal cover $\widetilde{M_\reg}$. This gives rise to the {\it developing map} 
$\dev : \widetilde{M_\reg} \to \R^3$; this is a local isometry which is uniquely defined up to an overall Euclidean motion. 
Associated to $\dev$ is the {\it holonomy representation}  $\rho : \pi_1(M_\reg) \to \mathrm{Isom}(\R^3)$, which is defined 
up to conjugacy 
and satisfies $\dev(\gamma(x)) = \rho(\gamma) \cdot \dev(x)$ for $x\in \widetilde{M_\reg}$ and $\gamma \in 
\pi_1(M_\reg)$.  Since $M$ is a cone-manifold, if $\gamma$ is any loop going around a singular edge of $M$, then 
$\rho(\gamma)$ is a rotation by an angle equal to the dihedral angle of that edge (modulo $2\pi$); furthermore,
if $\gamma_1$ and $\gamma_2$ are loops going around adjacent singular edges, then the axes of the corresponding rotations 
$\rho(\gamma_1)$ and $\rho(\gamma_2)$ are concurrent. 

A first-order deformation of the Euclidean structure on $M$ induces a vector field $X$ on $\widetilde{M_\reg}$ which 
represents the infinitesimal motion of points under the developing map. This vector field is equivariant in the
sense that if $\pr: \widetilde{M_\reg} \to M_\reg$ is the covering map, then $\left. (\pr)_*\right|_x X  - 
\left. (\pr)_* \right|_{\gamma(x)} (X)$ is a local Killing field. The infinitesimal deformation of the Euclidean metric on 
$\widetilde{M_\reg}$ is the Lie derivative $\tilde{h} = \calL_X g_{\widetilde{M_\reg}}$, and this descends to the 
infinitesimal deformation $h$ on $M_\reg$. 

In addition, there is an infinitesimal deformation of the holonomy representation, $\dot \rho : \pi_1(M_\reg ) \to \mathfrak g$, 
where $\mathfrak g$ is the Lie algebra of $G=\mathrm{Isom}(\R^3)$. This satisfies a cocycle condition: 
$\dot \rho (\gamma_1 \gamma_2) = \dot \rho (\gamma_1) + Ad(\rho(\gamma_1))(\dot \rho (\gamma_2))$ for all 
$\gamma_1$, $\gamma_2$ in $\pi_1(M_\reg)$.  If the infinitesimal deformation is trivial, then $\dot \rho$ 
is the derivative of a one-parameter family $\rho_t$ of representations where the $\rho_t$ are all 
conjugate to one another in $G$ and $\rho_0 = \rho$.  In this case, $\dot \rho$ is a coboundary, i.e.\ there exists 
a $v \in \mathfrak g$ such that $\dot \rho (\gamma) = v - Ad(\rho(\gamma))(v)$ for all $\gamma$ in $\pi_1(M_\reg)$. 
In other words, the space of infinitesimal deformations modulo trivial deformations can be identified with the 
group cohomology  $H^1(\pi_1(M_{\reg}); Ad\circ \rho)$. 

Now, $\R^3 = G/K$, where $K$ is the maximal compact group consisting of all
rotations fixing a point. Hence there exists over $\R^3$ a canonical flat $\mathfrak g$-bundle, with $G$ acting on 
the left by the adjoint action. Pulling back by $\dev$ gives a flat $\mathfrak g$-bundle $\tilde{E}$ over $\widetilde{M_\reg}$,
which descends to a flat $\mathfrak g$-bundle $E$ over $M_\reg$. This is the bundle of 
(germs of) infinitesimal isometries of $M_\reg$.  There is an alternate definition of $E$ as the quotient of 
$\widetilde{M_\reg}\times \mathfrak{g}$ by the equivalence relation  $(x,v)\sim(\gamma(x),Ad(\rho(\gamma))(v))$. 
In any case, $E$ splits into two orthogonal subbundles, $E_1 \oplus E_2$, where $E_1 \simeq TM_\reg$ is the
bundle of infinitesimal translations, and $E_2$ has fiber at $p$ isomorphic to $\mathfrak{so}(3)$ and corresponds 
to infinitesimal rotations centered at $p$. The flat connection $D$ on $E$ does not preserve this splitting, but does
preserve the flat subbundle $E_1$. In fact, the restriction of $D$ to $E_1$ is just the Levi-Civita connection on $TM_\reg$. 
The same decomposition holds for the lifted bundle $\tilde{E}$ over $\widetilde{M_\reg}$.

We have introduced $E$ because there is an isomorphism 
\begin{equation}\label{eq:isom}
H^1(M_\reg ;E) \stackrel{\sim}{\longrightarrow} H^1(\pi_1(M_\reg);\mathrm{Ad}\circ\rho)
\end{equation}
between $E$-valued de Rham cohomology and group cohomology, given by integrating closed $E$-valued $1$-forms 
over loops in $M_\reg$.  If $\omega \in \Omega^1(M_\reg;E)$ is a closed $1$-form which corresponds to an infinitesimal 
deformation of $M_\reg$, then it can be lifted to a closed $1$-form $\tilde{\omega} \in \Omega^1(\widetilde{M_\reg};\tilde{E})$.
This lift is exact since $\widetilde{M_\reg}$ is simply connected,  so there is a section $\tilde{s}$ of $\tilde{E}$ such that 
$\tilde{\omega}=D\tilde{s}$. The real part of $\tilde{s}$, i.e.\ the projection of $\tilde{s}$ onto $\tilde{E_1}\simeq T\widetilde{M_\reg}$, 
is naturally identified with the equivariant vector field $X$ on $\widetilde{M_\reg}$ defined earlier. In particular,  the cohomology class 
of $\omega$ is determined by this vector field $X$.

Now we may proceed with the proof in the Euclidean case.  Using the canonical isomorphisms $T^*M_\reg \simeq TM_\reg 
\simeq E_1 \subset E$ and the fact that the flat connection on $E$ preserves $E_1$, we see that the infinitesimal deformation
$h$ is also a harmonic $E$-valued $1$-form.  As such, there exists a section of $\tilde{E}$, which can be taken to equal 
its real part $X_h$, such that the lift $\tilde{h}$ of $h$ to $\widetilde{M_\reg}$ is equal to $D\,X_h$.  Since $D$ is the 
Levi-Civita connection on $TM_\reg$ (or $T\widetilde{M_\reg}$), this can be written as $\tilde{h} = \n \alpha_h$ where 
$\alpha_h$ is the section of $T^*\widetilde{M_\reg}$ dual to $X_h$. As we did earlier, we can separate this into its symmetric and 
alternating parts and write $\tilde{h}= \delta^* \alpha_h + \frac 12 d \alpha_h$.  However, $h$ and $\tilde{h}$ are symmetric tensors,
so $d\alpha_h =0$ and $\tilde{h}= \delta^* \alpha_h =  \calL_{\frac 12 X_h} g_{\widetilde{M_\reg}}$. This shows that $\frac12 X_h$ 
is the equivariant vector field describing the infinitesimal motion of points under the developing map, and hence that
$\frac12 h$ represents the cohomology class in $H^1(\pi_1(M_{\reg};Ad\circ\rho) \simeq H^1(M_\reg;E)$ corresponding to the 
original first-order deformation of the holonomy representation. 

Using this we can now finish the proof.  Indeed, the first-order deformation of the holonomy representation is given by 
integration of $h$: $\dot \rho (\gamma) = \int_\gamma \frac12 h$ for all $\gamma \in \pi_1(M_\reg)$ (this is just
the isomorphism \eqref{eq:isom} between $H^1(\pi_1(M_\reg);\mathrm{Ad}\circ\rho)$ and $H^1(M_\reg;E)$). 
Since $h$ is $E_1$-valued, this shows that $\dot \rho : \pi_1(M_\reg) \to \mathfrak g$ has values in the subalgebra 
of infinitesimal translations. Therefore, to first order, the holonomy representation is modified by translations only.

If $q$ is a singular vertex in $M$, i.e.\ a codimension $3$ singular point, then its link $N_q$ is embedded isometrically 
(up to scaling) as the boundary of a sufficiently small metric ball around $q$. Denoting by $N_\reg$ the regular part of $N$, 
we obtain an embedding $i : N_\reg \to M_\reg$. The fundamental group of $N_\reg$ is generated by loops winding around the singular edges meeting at $q$, so there exists a point $p \in \R^3$, corresponding to $q$, such that for any $\gamma \in 
\pi_1(N_\reg)$, $p$ lies on the axis of $\rho\circ i_* (\gamma)$. This gives a canonical map 
\[
\rho\circ i_* : \pi_1(N_\reg) \to \mathrm{Fix}\,(p) \simeq \mathrm{Isom}\,(\Sph^2),
\]
which is exactly the holonomy representation of spherical structure on $N$.

However, we proved that the deformation of the holonomy representation $\dot \rho$ has values in the subalgebra of 
infinitesimal translations. This implies at once that the first-order deformation of the holonomy representation of 
$N_q$ is zero.  This proves that to first order, the links of the vertices of $M$ are preserved. This completes the proof.

\vskip 2cm

\bibliographystyle{abbrv}
%\bibliography{../biblio}

\end{document}